\documentclass[master]{thesis}



\usepackage[cmex10]{amsmath}
\usepackage{amsthm,amssymb}

\usepackage[pdftex]{graphicx} 

\usepackage[caption=false]{subfig}

\usepackage{booktabs}

\usepackage{url}
\urlstyle{same} 

\usepackage{cite} 

\newcommand{\cref}[1]{Chapter~\ref{#1}}  


\title{Multidimensional scaling: Infinite metric measure spaces}

\author{Lara Kassab}

\email{lara.kassab@colostate.edu}

\department{Department of Mathematics}

\semester{Spring 2019}

\advisor{Henry Adams}
\committee{Michael Kirby} 
\committee{Bailey Fosdick}


\mycopyright{
Copyright by Lara Kassab 2019\\
All Rights Reserved
}


\abstract{%
Multidimensional scaling (MDS) is a popular technique for mapping a finite metric space into a low-dimensional Euclidean space in a way that best preserves pairwise distances.
We study a notion of MDS on infinite metric measure spaces, along with its optimality properties and goodness of fit.
This allows us to study the MDS embeddings of the geodesic circle $S^1$ into $\mathbb{R}^m$ for all $m$, and to ask questions about the MDS embeddings of the geodesic $n$-spheres $S^n$ into $\mathbb{R}^m$.
Furthermore, we address questions on convergence of MDS.
For instance, if a sequence of metric measure spaces converges to a fixed metric measure space $X$, then in what sense do the MDS embeddings of these spaces converge to the MDS embedding of $X$?
Convergence is understood when each metric space in the sequence has the same finite number of points, or when each metric space has a finite number of points tending to infinity.
We are also interested in notions of convergence when each metric space in the sequence has an arbitrary (possibly infinite) number of points.
}


\acknowledgements{We would like to thank Henry Adams, Mark Blumstein, Bailey Fosdick, Michael Kirby, Henry Kvinge, Facundo M{\'e}moli, Louis Scharf, the students in Michael Kirby's Spring 2018 class, and the Pattern Analysis Laboratory at Colorado State University for their helpful conversations and support throughout this project.
}

\usepackage{amsmath,amssymb,amsthm}
\usepackage[hidelinks]{hyperref}
\usepackage{graphicx}
\usepackage{stackrel}
\usepackage{tikz} 
\usetikzlibrary{matrix,arrows}
\usepackage{color}
\usepackage{textcomp}
\usepackage{enumerate}
\usepackage{mathtools}
\sloppy

\newtheoremstyle{myremark} 
    {7pt}                    
    {7pt}                    
    {}  	                 
    {}                           
    {\bf}       	         
    {.}                          
    {.5em}                       
    {}  
    
\theoremstyle{plain}
\newtheorem{lemma}{Lemma}[section]

\newtheorem{corollary}[lemma]{Corollary}
\newtheorem{proposition}[lemma]{Proposition}
\newtheorem{theorem}[lemma]{Theorem}
\newtheorem*{theorem-main}{Theorem~\ref{thm:main}}
\newtheorem*{theorem-secondary}{Theorem~\ref{thm:secondary}}
\theoremstyle{definition}
\newtheorem{conjecture}[lemma]{Conjecture}
\newtheorem{definition}[lemma]{Definition}
\newtheorem{question}[lemma]{Question}
\theoremstyle{myremark}
\newtheorem{remark}[lemma]{Remark}
\newtheorem*{theorem*}{Theorem}

\newcommand{\bA}{\ensuremath{\mathbf{A}}}
\newcommand{\bB}{\ensuremath{\mathbf{B}}}
\newcommand{\bD}{\ensuremath{\mathbf{D}}}

\newcommand{\bGamma}{\ensuremath{\mathbf{\Gamma}}}
\newcommand{\bH}{\ensuremath{\mathbf{H}}}
\newcommand{\bI}{\ensuremath{\mathbf{I}}}

\newcommand{\bLambda}{\ensuremath{\mathbf{\Lambda}}}
\newcommand{\bX}{\ensuremath{\mathbf{X}}}
\newcommand{\C}{\ensuremath{\mathbb{C}}}

\newcommand{\N}{\ensuremath{\mathbb{N}}}

\newcommand{\R}{\ensuremath{\mathbb{R}}}

\newcommand{\cB}{\ensuremath{\mathcal{B}}}

\newcommand{\cG}{\ensuremath{\mathcal{G}}}
\newcommand{\cH}{\ensuremath{\mathcal{H}}}

\newcommand{\fD}{\ensuremath{\mathfrak{D}}}

\newcommand{\supp}{\ensuremath{\mathrm{supp}}}
\newcommand{\strain}{\ensuremath{\mathrm{Strain}}}
\newcommand{\stress}{\ensuremath{\mathrm{Stress}}}

\newcommand{\tr}{\ensuremath{\mathrm{Tr}}}
\newcommand*\diff{\mathop{}\!\mathrm{d}}

\begin{document} 

\frontmatter 

\maketitle    
\makemycopyright        
\makeabstract           
\makeacknowledgements   

\prelimtocentry{Dedication}
\begin{flatcenter}

    DEDICATION

    \vfill 

    \noindent \textit{I would like to dedicate this thesis to my parents.}
    \vfill 
\end{flatcenter}
\newpage

\tableofcontents   

\mainmatter

\chapter{Introduction}
\label{chap:intro}

Multidimensional scaling (MDS) is a set of statistical techniques concerned with the problem of constructing a configuration of $n$ points in a Euclidean space using information about the dissimilarities between the $n$ objects.
The dissimilarities need not be based on Euclidean distances; they can represent many types of dissimilarities between objects. 
The goal of MDS is to map the objects $x_1,  \ldots, x_n$ to a configuration (or embedding) of points $f(x_1), \ldots, f(x_n)$ in $\R^m$ in such a way that the given dissimilarities $d_{ij}$ are well-approximated by the Euclidean distances between $f(x_i) $ and $f(x_j)$.
The different notions of approximation give rise to the different types of MDS, and the choice of the embedding dimension $m$ is arbitrary in principle, but low in practice ($m = 1, 2, \mbox{or } 3$).

MDS is an established multivariate analysis technique used in a multitude of disciplines. 
It mainly serves as a visualization technique for proximity data, the input of MDS, which is usually represented in the form of an $n \times n$ dissimilarity matrix. 
Proximity refers to similarity and dissimilarity measures; these measures are essential to solve many pattern recognition problems such as classification and clustering.
A frequent source of dissimilarities is distances between high-dimensional objects, and in this case, MDS acts as an (often nonlinear) dimension reduction technique.

MDS is indeed an optimization problem because a perfect Euclidean embedding preserving the dissimilarity measures does not always exist.
If the dissimilarity matrix can be realized exactly as the distance matrix of some set of points in $\R^m$ (i.e.\ if the dissimilarity matrix is \emph{Euclidean}), then MDS will find such a realization.
Furthermore, MDS can be used to identify the minimum such Euclidean dimension $m$ admitting an isometric embedding.
However, some dissimilarity matrices or metric spaces are inherently non-Euclidean (cannot be embedded into $\R^m$ for any $m$).
When a dissimilarity matrix is not Euclidean, then MDS produces a mapping into $\R^m$  that distorts the interpoint pairwise distances as little as possible, in a sense that can be made precise.

The various types of MDS arise mostly from the different loss functions they minimize, and they mainly fall into two categories: metric and non-metric MDS. 
A brief overview on the different types of MDS is given in Section~\ref{sec: mMDS}.
One of the main methods of MDS is commonly known as classical multidimensional scaling (cMDS), which minimizes a form of a loss function known as $\strain$. The classical MDS algorithm is algebraic and not iterative. 
Therefore, it is simple to implement, and is guaranteed to discover the optimal configuration in $\R^m$.
In Section~\ref{sec: cMDS}, we describe the algorithm, and discuss its optimality properties and goodness of fit. 

A \emph{metric measure space} is a triple $(X,d_X,\mu_ X)$ where $(X,d_X)$ is a compact metric space, and $\mu_X$ is a Borel probability measure on $X$. 
In this work, we study a notion of MDS on infinite metric measure spaces, which can be simply thought of as spaces of infinitely many points equipped with some probability measure.
Our motivation is to prove convergence properties of MDS of metric measure spaces. 
That is, if a sequence of metric measure spaces $X_n$ converges to a fixed metric measure space $X$ as $n \to \infty$, then in what sense do the MDS embeddings of these spaces converge to the MDS embedding of $X$?
Convergence is well-understood when each metric space has the same finite number of points, and also fairly well-understood when each metric space has a finite number of points tending to infinity. 
An important example is the behavior of MDS as one samples more and more points from a dataset.
We are also interested in convergence when the metric measure spaces in the sequence perhaps have an infinite number of points.
In order to prove such results, we first need to define the MDS embedding of an infinite metric measure space $X$, and study its optimal properties and goodness of fit.

In Section~\ref{ss:infinite-mds}, we explain how MDS generalizes to possibly infinite metric measure spaces.
We describe an infinite analogue to the classical MDS algorithm.
Furthermore, in Theorem~\ref{Thm: infinite-mds-optimization} we show that this analogue minimizes a $\strain$ function similar to the $\strain$ function of classical MDS. 
This theorem generalizes~\cite[Theorem~14.4.2]{bibby1979multivariate}, or equivalently~\cite[Theorem~2]{trosset1997computing}, to the infinite case.
Our proof is organized analogously to the argument in~\cite[Theorem~2]{trosset1997computing}.

As a motivating example, we consider the MDS embeddings of the circle equipped with the (non-Euclidean) geodesic metric.
By using the properties of circulant matrices, we carefully identify the MDS embeddings of evenly-spaced points from the geodesic circle into $\R^m$, for all $m$.
As the number of points tends to infinity, these embeddings lie along the curve
\[\sqrt{2}\left(\cos\theta, \sin\theta, \tfrac{1}{3}\cos3\theta, \tfrac{1}{3}\sin3\theta,
\tfrac{1}{5}\cos5\theta, \tfrac{1}{5}\sin5\theta,\ldots\right)\in\R^m.\]

Furthermore, we address convergence questions for MDS.
Indeed, convergence is well-understood when each metric space has the same finite number of points~\cite{sibson1979studies}, but we are also interested in convergence when the number of points varies and is possibly infinite.
We survey Sibson's perturbation analysis~\cite{sibson1979studies} for MDS on a fixed number of $n$ points. 
We survey results of~\cite{bengio2004learning,koltchinskii2000random} on the convergence of MDS when $n$ points $\{x_1,\ldots,x_n\}$ are sampled from a metric space according to a probability measure $\mu$, in the limit as $n\to\infty$.
We reprove these results under the (simpler) deterministic setting when points are not randomly chosen, and instead we assume that the corresponding finite measures $\mu_n = \frac{1}{n}\sum\limits_{i=1}^{n} \delta_{x_i}$ (determined by $n$ points) converge to $\mu$.
This allows us, in Section~\ref{sec: convergence deterministic arbitrary}, to consider the more general setting where we have convergence of \emph{arbitrary} probability measures $\mu_n\to\mu$, where now each measure $\mu_n$ is allowed to have infinite support.

In Chapter~\ref{chap: related work}, we survey related work.
In Chapter~\ref{chap: preliminaries}, we present background information on proximities, metric spaces, spaces with various structures such as inner products or norms, and metric measure spaces. 
We present an overview on the theory of MDS in Chapter~\ref{chap: MDS theory}. 
We briefly describe the different types of MDS with most emphasis on classical MDS. 
In Chapter~\ref{chap: Operator Theory}, we present necessary background information on operator theory and infinite-dimensional linear algebra. 
We define a notion of MDS for infinite metric measure spaces in Chapter~\ref{chap: iMDS}.
In Chapter~\ref{chap: MDS circle}, we identify the MDS embeddings of the geodesic circle into $\R^m$, for all $m$, as a motivating example.
Lastly, in Chapter~\ref{chap: convergence}, we describe different notions of convergence of MDS.

\chapter{Related Work}
\label{chap: related work}

The reader is referred to the introduction of~\cite{trosset1998new} and to~\cite{de198213, groenen2014past} for some aspects of history of MDS.
Furthermore, the reader is referred to~\cite{smithies1970} for the theory of linear equations, mainly of the second kind, associated with the names of Volterra, Fredholm, Hilbert and Schmidt. 
The treatment has been modernised by the systematic use of the Lebesgue integral, which considerably widens the range of applicability of the theory. 
Among other things, this book considers singular functions and singular values as well.
 
There are a variety of papers that study some notion of robustness or convergence of MDS.
In a series of papers~\cite{sibson1978studies,sibson1979studies,sibson1981studies}, Sibson and his collaborators consider the robustness of multidimensional scaling with respect to perturbations of the underlying distance or dissimilarity matrix.
Indeed, convergence is well-understood when each metric space has the same finite number of points~\cite{sibson1979studies}, but we are also interested in convergence when the number of points varies and is possibly infinite.
The paper~\cite{bengio2004learning} studies the convergence of MDS when more and more points are sampled independent and identically distributed (i.i.d.) from an unknown probability measure $\mu$ on $X$.
The paper~\cite{koltchinskii2000random} presents a key result on convergence of eigenvalues of operators.
Furthermore,~\cite[Section 3.3]{pekalska2001generalized} considers embedding new points in psuedo-Euclidean spaces,~\cite[Section 3]{diaconis2008horseshoes} considers infinite MDS in the case where the underlying space is an interval (equipped with some metric), and~\cite[Section 6.3]{buja2008data} discusses MDS on large numbers of objects.

Some popular non-linear dimensionality reduction techniques besides MDS include  Isomap~\cite{tenenbaum2000global}, 
Laplacian eigenmaps~\cite{belkin2003laplacian}, Locally Linear Embedding (LLE)~\cite{roweis2000nonlinear}, and Nonlinear PCA~\cite{scholz2005non}. 
See also the recent paper~\cite{kvinge2018gpu} for a GPU-oriented dimensionality reduction algorithm that is inspired by Whitney's embedding theorem.
The paper~\cite{williams2001connection} makes a connection between kernel PCA and metric MDS, remarking that kernel PCA is a form of MDS when the kernel is isotropic.
The reader is referred to~\cite{bengio2004learning} for further relations between spectral embedding methods and kernel PCA.

\section{Applications of MDS} 

MDS is an established multivariate analysis technique used in a multitude of disciplines like social sciences, behavioral sciences, political sciences, marketing, etc. 
One of the main advantages of MDS is that we can analyze any kind of proximity data, i.e.\ dissimilarity or similarity measures. 
For instance, MDS acts as an (often nonlinear) dimension reduction technique when the dissimilarities are distances between high-dimensional objects.
Furthermore, when the dissimilarities are shortest-path distances in a graph, MDS acts as a graph layout technique~\cite{buja2008data}.
Furthermore, MDS is used in machine learning in solving classification problems. 
Some related developments in machine learning include Isomap~\cite{tenenbaum2000global} and 
kernel PCA~\cite{scholkopf1998nonlinear}. The reader is referred to~\cite{borg2005modern, buja2008data, cox2000multidimensional} for further descriptions on various applications of MDS.

\section{Multivariate Methods Related to MDS}

There exists several multivariate methods related to MDS. Some include Principal Component Analysis (PCA), Correspondence Analysis, Cluster Analysis and Factor Analysis~\cite{borg2005modern}. 
For instance, PCA finds a low-dimensional embedding of the data points that best preserves their variance as measured in the high-dimensional input space.
Classical MDS finds an embedding that preserves the interpoint distances, and it is equivalent to PCA when those distances are Euclidean.
The reader is referred to~\cite[Chapter 24]{borg2005modern} for a detailed comparison between the first three methods and MDS.
In Factor Analysis, the similarities between objects are expressed in the correlation matrix.
With MDS, one can analyze any kind of similarity or dissimilarity matrix, in addition to correlation matrices.

\chapter{Preliminaries}
\label{chap: preliminaries}

We describe preliminary material on proximities and metrics, on inner product and normed spaces, on metric measure spaces, and on distances between metric measure spaces.

\section{Proximities and Metrics}\label{sec: proximities}
\emph{Proximity} means nearness between two objects in a certain space.
It refers to similarity and dissimilarity measures, which are essential to solve many pattern recognition problems such as classification and clustering. 
Two frequent sources of dissimilarities are high-dimensional data and graphs~\cite{buja2008data}. 
Proximity data between $n$ objects is usually represented in the form of an $n \times n$ matrix. 
When the proximity measure is a \emph{similarity}, it is a measure of how similar two objects are, and when it is a \emph{dissimilarity}, it is a measure of how dissimilar two objects are. 
See~\cite[Section~1.3]{cox2000multidimensional} for a list of some of the commonly used similarity and dissimilarity measures.  
MDS is a popular technique used in order to visualize proximities between a collection of objects. 
In this section, we introduce some of the terminology and define some of the proximity measures discussed in the following chapters.

\begin{definition}
An $(n \times n)$ matrix $\mathbf D$ is called a \emph{dissimilarity matrix} if it is
symmetric and
\[d_{rr} = 0, \quad\text{with}\quad d_{rs} \geq 0 \quad\text{for}\quad r \neq s. \]
\end{definition}

The first property above is called refectivity, and the second property is called nonnegativity.
Note that there is no need to satisfy the triangle inequality.

\begin{definition}
A function $d\colon X \times X \to \R$ is called a \emph{metric} if the following conditions are fulfilled for all $ x,y,z \in X$:
\begin{itemize}
\item (reflectivity) $d(x, x) = 0$;
\item (positivity) $d(x, y) > 0$ for $x \neq y$;
\item (symmetry) $d(x, y) = d(y, x)$;
\item (triangle inequality) $d(x, y) \le d(x, z) + d(z, y)$.
\end{itemize}
\end{definition}

A \emph{metric space} $(X, d)$ is a set $X$ equipped with a metric $d \colon X \times X \to \R$.

\begin{definition}
The \emph{Euclidean distance} between two points $p = (p_1, \ldots, p_n)$ and $ q = (q_1, \ldots, q_n)$ in $\R^n$ is given by the formula,

\[ d( p, q) = \sqrt{\sum\limits_{i=1}^n (p_i - q_i)^2}. \]

\end{definition}

\begin{definition}
A dissimilarity matrix $\mathbf D$ is called \emph{Euclidean} if there exists a
configuration of points in some Euclidean space whose interpoint distances are given by $\mathbf D$; that is, if for some $m$, there exists points $x_1, \ldots, x_n \in \R^m$ such that 
\[d^2_{rs} = (x_r - x_s)^{\top}(x_r - x_s), \]
where $\top$ denotes the transpose of a matrix.
\end{definition}

\begin{definition}
A map $\phi : X \rightarrow Y$ between metric spaces $(X, d_X)$ and $(Y, d_Y)$ is an \emph{isometric embedding} if $d_Y(\phi (x), \phi (x')) = d_X(x, x')$ for all $x, x' \in X$.
The map $\phi$ is an \emph{isometry} if it is a surjective isometric embedding.
\end{definition}

Proximities in the form of similarity matrices also commonly appear in applications of MDS.
A reasonable measure of \emph{similarity} $s(a, b)$ must satisfy the following properties~\cite{bibby1979multivariate}:
\begin{itemize}
\item (symmetry) $s(a,b) = s(b,a)$,
\item (positivity) $s(a,b) >0$,
\item $s(a,b)$ increases as the similarity between $a$ and $b$ increases.
\end{itemize}

\begin{definition}
An $(n \times n)$ matrix $\mathbf C$ is called a \emph{similarity matrix} if it is symmetric and
\[c_{rs} \leq c_{rr} \quad \mbox{ for all } r,s. \]
\end{definition}

To use the techniques discussed in this thesis, it is necessary to transform the similarities to dissimilarities.
The standard transformation from a similarity matrix $\mathbf C$ to a dissimilarity matrix $\bD$ is defined by
\[ d_{rs} = \sqrt{c_{rr} -2 c_{rs} + c_{ss}}.\]

\section{Inner Product and Normed Spaces}\label{sec: spaces}

\begin{definition}
An \emph{inner product} on a linear space $X$ over the field $F$ $(\R \mbox{ or } \C)$ is a function $\langle \cdot ,\cdot \rangle \colon X\times X\to F$
with the following properties:

\begin{itemize}
\item (conjugate symmetry) $\langle x,y \rangle =\overline {\langle y,x\rangle}$ for all $x,y \in X$, where the overline denotes the complex conjugate;

\item (linearity in the first argument) \begin{align*}\langle \alpha x,y\rangle &=\alpha\langle x,y\rangle \\\langle x+y,z\rangle &=\langle x,z\rangle +\langle y,z\rangle\end{align*}
for all $x, y, z \in X$ and $\alpha \in F$;

\item (positive-definiteness)
\begin{align*}\langle x,x\rangle &\geq 0\\\langle x,x\rangle &=0\Leftrightarrow x=\mathbf {0} \end{align*}
for all $x \in X$.

\end{itemize}

\end{definition}

An \emph{inner product space} is a vector space equipped with an inner product.

\begin{definition}
A \emph{norm} on a linear space $X$ is a function $\| \cdot \| \colon X \to \R$ with the following properties:

\begin{itemize}
\item (nonnegative) $\|x\| \geq 0$, for all $x \in X$;
\item (homogeneous) $\| \lambda x \| = | \lambda| \|x \|$, for all $x \in X$ and $\lambda \in \R$ (or $\C$);
\item (triangle inequality) $\|x+y\| \leq \|x \| + \|y \|$ for all $x  , y \in X$;
\item (strictly positive) $\| x \| = 0$ implies $x=0$.

\end{itemize}
\end{definition}

A \emph{normed linear space} $(X, \| \cdot \|)$ is a linear space $X$ equipped with a norm $\| \cdot \|$.

\begin{definition}
A \emph{Hilbert Space} is a complete inner product space with norm defined by the inner product,
\[ \|f\|=\sqrt{\langle f,f \rangle}.\]
\end{definition}

\begin{definition} The space $\ell^2$ is a subspace of $\R^{\N}$ consisting of all sequences $(x_n)_{n=1}^{\infty}$ such that \[ \sum\limits _{n}|x_{n}|^2 <\infty. \]
\end{definition}

Define an inner product on $\ell ^2$ for all sequences $(x_n),(y_n) \in \ell^2$ by \[ \langle (x_n), (y_n) \rangle = \sum _{n=1} ^ \infty x_n y_n. \] 

The real-valued function $\|\cdot \|_{2} \colon \ell^2 \to \R$ defined by $\|(x_n)\|_{2}=\left(\sum\limits \limits_{n=1}^\infty|x_{n}|^{2}\right)^{1/2}$ defines a norm on $\ell ^2$.

\section{Metric Measure Spaces}\label{sec: metric measure spaces}
The following introduction to metric measure spaces and the metrics on them is based on~\cite{memoli2011gromov}. 
The reader is referred to~\cite{memoli2011gromov, memoli2014gromov} for detailed descriptions and interpretations of the following concepts in the context of object matching.

\begin{definition} The support of a measure $\mu$ on a metric space $(Z,d)$, denoted by $\supp[\mu]$, is the minimal closed subset $Z_0 \subseteq  Z$ such that
$\mu (Z \backslash Z_0) = 0$.
\end{definition}

Given a metric space $(X, d_X)$, by a measure on $X$ we mean a measure on $(X,\cB(X))$, where $\cB(X)$ is the Borel $\sigma$-algebra of $X$.
Given measurable spaces $(X,\cB(X))$ and $(Y,\cB(Y ))$ with measures $\mu_X$ and $\mu_Y$, respectively, let $\cB(X\times Y)$ be the $\sigma$--algebra on $X\times Y$ generated by subsets of the form $A \times B$ with $A \in \cB(X)$ and $B \in \cB(Y)$.
The product measure $\mu_X \otimes \mu_Y$ is defined to be the unique measure on $(X \times Y, \cB(X \times Y))$ such that $\mu_X \otimes \mu_Y (A \times B) = \mu_X(A) \mu_Y (B)$
for all $A \in \cB(X)$ and $B \in \cB(Y)$. Furthermore, for $x \in X$, let $\delta_x^X$ denote the Dirac measure on $X$.

\begin{definition}
For a measurable map $f : X\to Y$ between two compact metric spaces $X$ and $Y$, and for $\mu$ a measure on $X$, the \emph{push-forward measure $f_\# \mu$} on $Y$ is given by $f_\# \mu(A) =
\mu(f^{-1}(A))$ for $A \in \cB(Y)$.
\end{definition}

\begin{definition} A \emph{metric measure space} is a triple $(X,d_X,\mu_ X)$ where
\begin{itemize}
\item $(X,d_X)$ is a compact metric space, and
\item $\mu_X$ is a Borel probability measure on $X$, i.e.\ $\mu_X (X) = 1$.
\end{itemize}
\end{definition}

In the definition of a metric measure space, it is sometimes assumed that $\mu_X$ has full support, namely that $\supp[\mu_X] = X$. 
When this is done, it is often for notational convenience.
For the metric measure spaces that appear in Section~\ref{ss:converge-metric} we will assume that $\supp[\mu_X] = X$, but we will not make this assumption elsewhere in the document.

Denote by $\cG_w$ the collection of all metric measure spaces.
Two metric measure spaces $(X, d_X, \mu_X)$ and $(Y, d_Y, \mu_Y)$ are called \emph{isomorphic} if and only if there exists an isometry $\psi \colon X \to Y$ such that
$(\psi)_\#\mu_X = \mu_Y$.

\section{Metrics on Metric Measure Spaces}\label{ss:converge-metric}

We describe a variety of notions of distance between two metric spaces or between two metric measure spaces.
The content in this section will only be used in Section~\ref{sec: convergence GW}. We begin by first defining the Hausdorff distance between two aligned metric spaces, and the Gromov--Hausdorff distance between two unaligned metric spaces. The Gromov--Wasserstein distance is an extension of the Gromov--Hausdorff distance to metric measure spaces.
In this section alone, we assume that the metric measure spaces $(X,d_X,\mu_ X)$ that appear satisfy the additional property that $\supp[\mu_X] = X$.

\begin{definition} Let $(Z, d)$ be a compact metric space.
The \emph{Hausdorff distance} between any two closed sets $A$, $B \subseteq Z$ is defined as 
\[ d_\mathcal{H}^Z(A,B) = \max\{\,\sup _{{a\in A}}\inf _{{b\in B}}d(a,b),\,\sup _{{b\in B}}\inf _{{a\in A}}d(a,b)\,\}{\mbox{.}}\!\]
\end{definition}

\begin{definition} The \emph{Gromov--Hausdorff distance} between compact metric spaces X and Y is defined as \begin{equation}d_{\mathcal{GH}}(X,Y) = \inf _{{Z, f, g}}d_\mathcal{H}^Z(f(X),g(Y))\end{equation}
where $f : X \rightarrow Z$ and $g : Y \rightarrow Z$ are isometric embeddings into the metric space $(Z, d)$.
\end{definition}

It is extremely difficult to compute Gromov--Hausdorff distances; indeed the set of all metric spaces $Z$ admitting isometric embeddings from both $X$ and $Y$ is an unbelievably large collection over which to take an infimum.
It is possible to give an equivalent definition of Gromov--Hausdorff distances instead using correspondences (which feel more computable though in practice are still hard to compute.) 

\begin{definition}
A subset $R \subseteq X \times Y$ is said to be a \emph{correspondence} between sets $X$ and $Y$ whenever $\pi _1(R) = X$
and $\pi _2(R) = Y$, where $\pi _1 \colon X \times Y \rightarrow X$ and $\pi_2 \colon X \times Y \rightarrow Y$ are the canonical projections.
\end{definition}
Let $\mathcal{R}(X,Y)$ denote the set of all possible correspondences between sets $X$ and $Y$.
For metric spaces $(X, d_X)$ and $(Y, d_Y)$, let the \emph{distortion} $\Gamma _{X,Y} \colon X \times Y \times X \times Y \to \R^{+}$ be given by
\[\Gamma _{X,Y} (x,y,x',y') = | d_X(x, x') - d_Y (y, y')|.\]

\begin{definition} (Alternative form of $ d_{\mathcal{GH}}$) One can equivalently (in the sense of equality)
define the \emph{Gromov--Hausdorff distance} between compact metric spaces $(X, d_X)$ and $(Y, d_Y)$ as 
\begin{equation} 
d_{\mathcal{GH}}(X,Y) = \frac{1}{2} \inf_{{R \in \mathcal {R}(X,Y)}} \sup_{\substack{(x,y)\in R \\ (x',y') \in R}} \Gamma _{X,Y} (x,y,x',y'),
\end{equation}
where $R$ ranges over $\mathcal{R}(X,Y)$.
\end{definition}

We now build up the machinery to describe a notion of distance not between metric spaces, but instead between metric measure spaces.
Let $\mathcal{C}(Z)$ denote the collection of all compact subsets of $Z$.
We denote the collection of all \emph{weighted objects} in the metric space $(Z,d)$ by
\[\mathcal{C}_w(Z) := \{(A, \mu _A)~|~A \in \mathcal{C}(Z)\},
\]
where for each $A \in \mathcal{C}(Z)$, $\mu _A$ is a Borel probability measure with $\supp[\mu _A] = A$.
Informally speaking, an object in $\mathcal{C}_w(Z)$ is specified not only by the set of points that constitute it, but also by a distribution of importance over these points.
These probability measures can be thought of as acting as weights for each point in the metric space~\cite{memoli2011gromov}.

This following relaxed notion of correspondence between objects is called a matching measure, or a coupling.

\begin{definition}\label{def: matching measure} (Matching measure)
Let $(A,\mu_A),(B,\mu_B) \in C_w(Z)$.
A measure $\mu$ on the product space $A \times B$ is a \emph{matching measure} or \emph{coupling} of $\mu _A$ and $\mu_B$ if
\[\mu (A_0 \times B) = \mu_ A(A_0) \mbox{ \quad and \quad } \mu (A \times B_0) = \mu_B(B_0)\]
for all Borel sets $A_0 \subseteq A$ and $B_0 \subseteq B$.
Denote by $\mathcal{M}(\mu_A,\mu_B)$ the set of all couplings of $\mu_A$ and $\mu_B$.
\end{definition}

\begin{proposition}\cite[Lemma 2.2]{memoli2011gromov}\label{prop: matching measure}
Let $\mu_A$ and $\mu _B$ be Borel probability measures on $(Z, d)$, a compact
space, with $\supp(\mu_A)=\supp(\mu_B)=Z$. If $ \mu \in \mathcal M (\mu_A, \mu_B)$, then $\mathcal R(\mu) := \supp [\mu]$ belongs to $\mathcal R(\supp[\mu_A], \supp[\mu_B])$.
\end{proposition}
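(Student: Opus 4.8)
The plan is to show directly that $\pi_1(\supp[\mu]) = \supp[\mu_A]$ and $\pi_2(\supp[\mu]) = \supp[\mu_B]$, which is exactly the assertion that $\mathcal{R}(\mu) = \supp[\mu]$ is a correspondence between $\supp[\mu_A]$ and $\supp[\mu_B]$. I will prove the first equality and note that the second follows by the symmetric argument. Write $S := \supp[\mu] \subseteq Z \times Z$, which is a closed, and hence compact, subset since $Z \times Z$ is compact. The marginal conditions in Definition~\ref{def: matching measure} say precisely that $\mu_A = (\pi_1)_\# \mu$, so throughout I would translate statements about $\mu_A$ into statements about $\mu$ via $\mu_A(A_0) = \mu(\pi_1^{-1}(A_0)) = \mu(A_0 \times Z)$.

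First I would establish $\supp[\mu_A] \subseteq \pi_1(S)$. Since $\pi_1$ is continuous and $S$ is compact, $\pi_1(S)$ is compact and therefore closed. Because $\pi_1^{-1}(Z \setminus \pi_1(S))$ is disjoint from $S = \supp[\mu]$, it is $\mu$-null, and so $\mu_A(Z \setminus \pi_1(S)) = \mu(\pi_1^{-1}(Z \setminus \pi_1(S))) = 0$. Thus $\pi_1(S)$ is a closed set of full $\mu_A$-measure, and minimality of the support forces $\supp[\mu_A] \subseteq \pi_1(S)$.

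For the reverse inclusion $\pi_1(S) \subseteq \supp[\mu_A]$, I would argue by contradiction. Let $x_0 \in \pi_1(S)$, so that $(x_0, y_0) \in S$ for some $y_0 \in Z$. If $x_0 \notin \supp[\mu_A]$, then $U := Z \setminus \supp[\mu_A]$ is an open neighborhood of $x_0$ with $\mu_A(U) = 0$; hence $U \times Z$ is an open neighborhood of $(x_0, y_0)$ satisfying $\mu(U \times Z) = \mu_A(U) = 0$. This contradicts $(x_0, y_0) \in \supp[\mu]$, since every open neighborhood of a point in the support has positive measure. Therefore $x_0 \in \supp[\mu_A]$.

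Combining the two inclusions gives $\pi_1(S) = \supp[\mu_A]$, and symmetrically $\pi_2(S) = \supp[\mu_B]$; in particular $S \subseteq \supp[\mu_A] \times \supp[\mu_B]$ with both projections surjective, so $S \in \mathcal{R}(\supp[\mu_A], \supp[\mu_B])$. I expect the main obstacle to be the inclusion $\pi_1(S) \subseteq \supp[\mu_A]$: the delicate points are that one must exhibit a genuine open \emph{product} neighborhood on which $\mu$ vanishes, and that the argument rests on the topological characterization of support points. Compactness of $Z$ is precisely what guarantees $\pi_1(S)$ is closed, so that the minimality-of-support step in the first inclusion is legitimate.
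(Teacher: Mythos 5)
Your proof is correct. Note, however, that the paper itself offers no proof of this proposition: it is stated purely as a citation of Lemma~2.2 of M\'emoli's paper, so there is no internal argument to compare against; your write-up supplies exactly the standard argument that the cited source uses (projections of the support of a coupling onto the supports of its marginals). Two small points worth making explicit: (i) your argument silently uses both characterizations of support --- minimality among closed co-null sets (in the inclusion $\supp[\mu_A]\subseteq\pi_1(S)$) and the property that every open neighborhood of a support point has positive measure (in the reverse inclusion) --- and their equivalence requires separability, which holds here because $Z$, and hence $Z\times Z$, is compact metric; (ii) under the proposition's hypothesis $\supp(\mu_A)=\supp(\mu_B)=Z$, your conclusion reduces to $\pi_1(S)=\pi_2(S)=Z$, so your proof is in fact slightly more general than what is asked, since it never uses full support.
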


\begin{definition}[Wasserstein--Kantorovich--Rubinstein distances
between measures]
For each $p \geq 1$, the
following family of distances on $\mathcal C_w(Z)$ known as the \emph{Wasserstein distances}, where $(Z, d)$ is a compact metric space:
\[ d^Z_{W,p}(A,B) = \left(\inf _{\mu \in \mathcal M(\mu_A ,\mu_B )}\int _{A\times B}d(a,b)^p \diff \mu (a,b) \right)^{1/p}, \]
for $1 \leq p < \infty$, and
\[ d^Z_{W,\infty}(A,B) = \inf _{\mu \in \mathcal M(\mu_A ,\mu_B )} \sup _{(a,b) \in \mathcal R(\mu)} d(a,b).\]
\end{definition}
In~\cite{memoli2011gromov}, M\'emoli introduced and studied a metric $\fD_p$ on $\cG_w$, defined below.
An alternative metric $\cG_p$ on $\cG_w$ is defined and studied by Strum in~\cite{sturm2006geometry}.
The two distances are not equal.
Theorem~5.1 of~\cite{memoli2011gromov} proves that $\cG_p \geq \fD_p$ for $ 1 \leq p \leq \infty$ and that $\cG_\infty = \fD_\infty$, where for $p < \infty$ the equality does not hold in general.

For $p \in [1, \infty )$ and $\mu \in \mathcal{M}(\mu_ X, \mu_Y)$, let
\[\mathbf{J}_p(\mu) = \frac{1}{2} \left( \int_{X \times Y} \int_{X \times Y} (\Gamma _{X,Y} (x,y,x',y'))^p \mu(\diff x \times dy) \mu (\diff x' \times \diff y')\right)^{\frac{1}{p}},\]
and also let 
\[\mathbf{J}_\infty (\mu)= \frac{1}{2} \sup_{\substack{x,x' \in X \\ y, y' \in Y \\ (x,y),(x',y') \in \mathcal R (\mu) }}\Gamma _{X, Y}(x,y,x',y').\]

\begin{definition}
For $1\leq p \leq \infty$, define the \emph{Gromov--Wasserstein distance} $\fD_p$ between two metric measure spaces $X$ and $Y$ as
\begin{equation*}
\fD_p(X, Y ) = \inf_{{\mu \in \mathcal{M}(\mu _X,\mu _Y)}} \mathbf{J}_p(\mu).
\end{equation*}
\end{definition}

In Section~\ref{sec: convergence GW}, we pose some questions relating the Gromov--Wasserstein distance to notions of convergence of MDS for possibly infinite metric measure spaces.

\chapter{The Theory of Multidimensional Scaling}
\label{chap: MDS theory}

Multidimensional scaling (MDS) is a set of statistical techniques concerned with the problem of constructing a configuration of $n$ points in a Euclidean space using information about the dissimilarities between the $n$ objects. 
The dissimilarities between objects need not be based on Euclidean distances; they can represent many types of dissimilarities. 
The goal of MDS is to map the objects $x_1,  \ldots, x_n$ to a configuration (or embedding) of points $f(x_1), \ldots, f(x_n)$ in $\R^m$ in such a way that the given dissimilarities $d(x_i,x_j)$ are well-approximated by the Euclidean distance $\|f(x_i) - f(x_j)\|_2$.
The different notions of approximation give rise to the different types of MDS.

If the dissimilarity matrix can be realized exactly as the distance matrix of some set of points in $\R^m$ (i.e.\ if the dissimilarity matrix is \emph{Euclidean}), then MDS will find such a realization.
Furthermore, MDS can be used to identify the minimum such Euclidean dimension $m$ admitting an isometric embedding.
However, some dissimilarity matrices or metric spaces are inherently non-Euclidean (cannot be embedded into $\R^m$ for any $m$).
When a dissimilarity matrix is not Euclidean, then MDS produces a mapping into $\R^m$  that distorts the interpoint pairwise distances as little as possible.
Though we introduce MDS below, the reader is also referred to~\cite{bibby1979multivariate, cox2000multidimensional, groenen2014past} for more complete introductions to MDS.

\section{Types of Multidimensional Scaling}  
There are several types of MDS, and they differ mostly in the loss function they minimize.
In general, there are two dichotomies (the following discussion is from~\cite{buja2008data}) :
\begin{enumerate}
\item Kruskal--Shepard distance scaling versus classical Torgerson--Gower inner-product scaling: In distance scaling, dissimilarities are fitted by distances, whereas classical scaling transforms the dissimilarities to a form that is naturally fitted by inner products.
\item Metric scaling versus nonmetric scaling: Metric scaling uses the actual values of the dissimilarities, while nonmetric scaling effectively uses only their ranks, i.e., their orderings~\cite{kruskal1964multidimensional, shepard1962analysis1, shepard1962analysis2}.
Nonmetric MDS is realized by estimating an optimal monotone transformation $f(d_{ij})$ of the dissimilarities while simultaneously estimating the configuration.
\end{enumerate}

There are two main differences between classical and distance scaling. First, inner products rely on an origin, while distances do not. So, a set of inner products determines uniquely a set of distances, but a set of distances determines a set of inner products only modulo change of origin. To avoid arbitrariness, one constrains classical scaling to mean-centered configurations. Second, distance scaling requires iterative minimization while classical scaling can be solved in a single step by computing an inexpensive eigendecomposition. The different types of MDS arise from different combinations of \{metric,
nonmetric\} with \{distance, classical\}. The reader is referred to~\cite{buja2008data} for further discussion on the topic.

Two common loss functions are known as the $\strain$ and $\stress$ functions. We call ``\strain'' any loss function that measures the lack of fit between inner products $\langle f(x_i), f(x_j) \rangle$ of the configuration points in $\R^m$ and the inner-product data $b_{ij}$ of the given data points. The following is an example of a $\strain$ function, where $f$ is the MDS embedding map:
\[\strain(f)=\sum\limits _{i,j}{(b_{ij}-\langle f(x_{i}),f(x_{j})\rangle )}^{2}.\]

We call ``\stress'' any loss function that measures the lack of fit between the Euclidean distances $\hat d_{ij}$ of the configuration points and the given proximities $\delta_{ij}$.
The general form of $\stress$~\cite{cox2000multidimensional, timm2012investigation} is 
\[\stress(f)= \sqrt{\frac {\sum\limits _{i,j}{(h(\delta_{ij})-\hat d_{ij})}^{2}}{scale}},\]
where $f$ is the MDS embedding map, where $h$ is a smoothing function of the data, and where the `$scale$' component refers to a constant scaling factor, used to keep the value of $\stress$ in the convenient range between 0 and 1.
The choice of $h$ depends on the type of MDS needed.
In metric scaling, $h$ is the identity map, which means that the raw input proximity data is compared directly to the mapped distances.
In non-metric scaling, however, $h$ is usually an arbitrary monotone function that can be optimized over.
The reader is referred to~\cite{borg2005modern, cox2000multidimensional, timm2012investigation} for descriptions of the most common forms of $\stress$.

\section{Metric Multidimensional Scaling}\label{sec: mMDS}
Suppose we are given $n$ objects $x_1, \ldots, x_n$ with the dissimilarities $d_{ij}$ between them, for $i,j = 1, \ldots, n$. Metric MDS attempts to find a set of points $f(x_1), \ldots, f(x_n)$ in a Euclidean space of some dimension where each point represents one of the objects, and the distances between points $\hat d_{ij}$ are such that
\[\hat d_{ij} \approx h(d_{ij}).\]
Here $h$ is typically the identity function, but could also be a continuous parametric monotone function that attempts to transform the dissimilarities to a distance-like form~\cite{cox2000multidimensional}. Assuming that all proximities are already in a
satisfactory distance-like form, the aim is to find a mapping $f$, for which $d_{ij}$ is approximately equal to $\hat d_{ij}$, for all $i,j$.
The two main metric MDS methods are classical scaling and least squares scaling.
We will introduce both, with most emphasis placed on the former. 

\subsection{Classical Scaling}\label{sec: cMDS} Classical multidimensional scaling (cMDS) is also known as Principal Coordinates Analysis (PCoA), Torgerson Scaling, or Torgerson--Gower scaling. 
The cMDS algorithm minimizes a $\strain$ function, and one of the main advantages of cMDS is that its algorithm is algebraic and not iterative.
Therefore, it is simple to implement, and is guaranteed to discover the optimal configuration in $\R^m$.
In this section, we describe the algorithm of cMDS, and then we discuss its optimality properties and goodness of fit. 

Let $\bD = (d_{ij})$ be an $n\times n$ dissimilarity matrix.
Let $\bA = (a_{ij})$, where $a_{ij} = -\frac{1}{2}d^2_{ij}$.
Define
\begin{equation}\label{B definition}
\bB = \bH\bA\bH,
\end{equation}
where $\bH = \bI - n^{-1}\mathbf{1}\mathbf{1}^{\top}$ is the centering matrix of size $n$ ($\mathbf{1}$ is a column-vector of $n$ ones and ${\top}$ denotes matrix transpose).
Multiplying $\bA$ by the matrix $\bH$ on either side has the effect of double-centering the matrix.
Indeed, we have
\[ b_{rs}=a_{rs}-\overline{a_{r\cdot}}-\overline{a_{\cdot s}}+\overline{a_{\cdot\cdot}},\]
where $\overline{a_{r\cdot}}=\frac{1}{n}\sum\limits_{s=1}^n a_{rs}$ is the average of row $r$, where $\overline{a_{\cdot s}}=\frac{1}{n}\sum\limits_{r=1}^n a_{rs}$ is the average of column $s$, and where $\overline{a_{\cdot\cdot}}=\frac{1}{n^2}\sum\limits_{r,s=1}^n a_{rs}$ is the average entry in the matrix.
 Since $\bD$ is a symmetric matrix, it follows that $\bA$ and $\bB$ are each symmetric, and therefore $\bB$ has $n$ real eigenvalues.

Assume for convenience that there are at least $m$ positive eigenvalues for matrix $\bB$, where $m\le n$.
By the spectral theorem of symmetric matrices, let $\bB = \bGamma \bLambda \bGamma^\top$ with $\bGamma$ containing unit-length eigenvectors of $\bB$ as its columns, and with the diagonal matrix $\bLambda$ containing the eigenvalues of $\bB$ in decreasing order along its diagonal.
Let $\bLambda_m$ be the $m\times m$ diagonal matrix of the largest $m$ eigenvalues sorted in descending order, and let $\bGamma_m$ be the $n\times m$ matrix of the corresponding $m$ eigenvectors in $\bGamma$.
Then the coordinates of the MDS embedding into $\R^m$ are given by the $n\times m$ matrix $\bX=\bGamma_m\bLambda_m^{1/2}$.
More precisely, the MDS embedding consists of the $n$ points in $\R^m$ given by the $n$ rows of $\bX$.

The procedure for classical MDS can be summarized in the following steps.
\begin{enumerate}
\item Compute the matrix $\bA = (a_{ij})$, where $a_{ij} = -\frac{1}{2}d^2_{ij}$.
\item Apply double-centering to $\bA$:
Define $\bB = \bH\bA\bH$, where $\bH = \bI - n^{-1}\mathbf{1}\mathbf{1}^\top$.
\item Compute the eigendecomposition of $\bB = \bGamma \bLambda \bGamma^\top$.
\item  Let $\bLambda_m$ be the matrix of the largest $m$ eigenvalues sorted in descending order, and let $\bGamma_m$ be the matrix of the corresponding $m$ eigenvectors.
Then, the coordinate matrix of classical MDS is given by $\bX=\bGamma_m\bLambda_m^{1/2}$.
\end{enumerate}

The following is a fundamental criterion in determining whether the dissimilarity matrix $\bD$ is Euclidean or not.
\begin{theorem}~\cite[Theorem~14.2.1]{bibby1979multivariate}\label{MDS p.s.d}
Let $\bD$ be a dissimilarity matrix, and define $\bB$ by equation~\eqref{B definition}.
Then $\bD$ is Euclidean if and only if $\bB$ is a positive semi-definite matrix.
\end{theorem}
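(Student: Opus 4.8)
The plan is to prove both implications by relating the doubly-centered matrix $\bB$ to a Gram matrix of a centered configuration. The one algebraic fact driving everything is that the centering matrix $\bH = \bI - n^{-1}\mathbf{1}\mathbf{1}^\top$ satisfies $\bH\mathbf{1} = 0$, so $\bH$ annihilates any rank-one term of the form $\mathbf{1}v^\top$ or $v\mathbf{1}^\top$; this is exactly the part of $\bA$ that encodes a choice of origin rather than genuine distance information.

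For the forward direction, I would suppose $\bD$ is Euclidean, realized by points $x_1,\ldots,x_n\in\R^m$ with $d_{rs}^2 = \|x_r - x_s\|^2$. Expanding the square gives $a_{rs} = -\tfrac{1}{2}d_{rs}^2 = x_r^\top x_s - \tfrac{1}{2}\|x_r\|^2 - \tfrac{1}{2}\|x_s\|^2$. Writing $\bX$ for the $n\times m$ matrix whose $r$-th row is $x_r^\top$ and $c$ for the vector with $c_r = \|x_r\|^2$, this reads $\bA = \bX\bX^\top - \tfrac{1}{2}c\mathbf{1}^\top - \tfrac{1}{2}\mathbf{1}c^\top$ in matrix form. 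Since $\bH\mathbf{1} = 0$, the two rank-one correction terms vanish under the double-centering $\bB = \bH\bA\bH$, leaving $\bB = \bH\bX\bX^\top\bH = (\bH\bX)(\bH\bX)^\top$. Any matrix of the form $YY^\top$ is positive semi-definite, so $\bB$ is p.s.d.

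For the converse, I would suppose $\bB$ is p.s.d. and use the spectral theorem to write $\bB = \bGamma\bLambda\bGamma^\top$ with $\bLambda$ diagonal and nonnegative, so that $\bLambda^{1/2}$ is real; setting $\bX = \bGamma\bLambda^{1/2}$ gives $\bB = \bX\bX^\top$ and hence $b_{rs} = x_r^\top x_s$ for the rows $x_r$ of $\bX$. It then remains to verify that these points realize the dissimilarities, i.e. that $\|x_r - x_s\|^2 = b_{rr} - 2b_{rs} + b_{ss}$ equals $d_{rs}^2$. Using the explicit double-centering formula $b_{rs} = a_{rs} - \overline{a_{r\cdot}} - \overline{a_{\cdot s}} + \overline{a_{\cdot\cdot}}$ together with the symmetry of $\bA$, all of the row-average, column-average, and grand-average terms cancel in the combination $b_{rr} - 2b_{rs} + b_{ss}$, leaving $a_{rr} - 2a_{rs} + a_{ss}$. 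Substituting $a_{ij} = -\tfrac{1}{2}d_{ij}^2$ and using $d_{rr} = d_{ss} = 0$ (as $\bD$ is a dissimilarity matrix) yields exactly $d_{rs}^2$, so $\bD$ is Euclidean.

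The computations are elementary, and I do not expect a genuine obstacle; the only step requiring care is the bookkeeping in the converse, where one must track the three types of averages and invoke the symmetry $\overline{a_{r\cdot}} = \overline{a_{\cdot r}}$ to confirm that every averaged term cancels. The conceptual heart of both directions is the single observation that double-centering strips off precisely the origin-dependent, rank-one part of $\bA$ that distinguishes inner products from squared distances.
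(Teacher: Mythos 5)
Your proof is correct. Note that the paper itself does not prove this statement---it is quoted directly from~\cite[Theorem~14.2.1]{bibby1979multivariate}---and your argument is essentially the standard one given there: in the forward direction, writing $\bA = \bX\bX^\top - \tfrac{1}{2}c\mathbf{1}^\top - \tfrac{1}{2}\mathbf{1}c^\top$ and using $\bH\mathbf{1}=0$ to get $\bB = (\bH\bX)(\bH\bX)^\top$; in the converse, factoring $\bB = \bGamma\bLambda\bGamma^\top = \bX\bX^\top$ via the spectral theorem and checking that $b_{rr}-2b_{rs}+b_{ss} = a_{rr}-2a_{rs}+a_{ss} = d_{rs}^2$, where the cancellation of the averaged terms uses the symmetry $\overline{a_{r\cdot}} = \overline{a_{\cdot r}}$ and reflectivity $d_{rr}=0$. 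Both directions are complete, and the converse construction is exactly the classical MDS embedding described in Section~\ref{sec: cMDS} of the paper, so your argument also justifies the remark following the theorem that a rank-$m$ positive semi-definite $\bB$ yields a realization in $\R^m$.
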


In particular, if $\bB$ is positive semi-definite of rank $m$, then a perfect realization of the dissimilarities can be found by a collection of points in $m$-dimensional Euclidean space. 

If we are given a Euclidean matrix $\bD$, then the classical solution to the MDS problem in $k$ dimensions has the following optimal property:
\begin{theorem}~\cite[Theorem~14.4.1]{bibby1979multivariate}
Let $\bD$ be a Euclidean distance matrix corresponding to a
configuration $\bX$ in
$\R^m$, and fix $k$ $(1 \leq k \leq m)$.
Then amongst all projections
$\mathbf{XL_1}$ of $\bX$ onto $k$-dimensional subspaces of $\R^m$, the quantity $\sum\limits_{r,s=1} ^n (d_{rs}^2 - \hat  d _{rs}^2)$ is
minimized when $\bX$ is projected onto its principal coordinates in $k$ dimensions.
\end{theorem}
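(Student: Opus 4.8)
The plan is to reduce the stated minimization to a trace-maximization problem and then invoke the Rayleigh--Ritz (Ky Fan) characterization of the top eigenspace of a symmetric matrix. Since both $d_{rs}$ and the projected distances $\hat d_{rs}$ are translation invariant, I would first assume without loss of generality that the configuration is mean-centered, i.e.\ that the rows $x_1,\dots,x_n$ of $\bX$ satisfy $\sum_{r=1}^{n} x_r = 0$. A projection of $\bX$ onto a $k$-dimensional subspace of $\R^m$ is encoded by an $m\times k$ matrix $\mathbf{L}_1$ with orthonormal columns, which I extend to an $m\times m$ orthogonal matrix $(\mathbf{L}_1,\mathbf{L}_2)$. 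The projected interpoint distances then satisfy $\hat d_{rs}^2 = \|\mathbf{L}_1^\top(x_r - x_s)\|^2$, and the Pythagorean identity for the orthogonal splitting $\R^m = \mathrm{col}(\mathbf{L}_1)\oplus\mathrm{col}(\mathbf{L}_2)$ gives
\[ d_{rs}^2 - \hat d_{rs}^2 = \|\mathbf{L}_2^\top(x_r - x_s)\|^2 \ge 0. \]
In particular the objective is nonnegative, and minimizing $\sum_{r,s=1}^{n}(d_{rs}^2 - \hat d_{rs}^2)$ over admissible $\mathbf{L}_1$ is equivalent to \emph{maximizing} $\sum_{r,s=1}^{n}\hat d_{rs}^2$.

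Next I would evaluate this sum explicitly. Writing $\mathbf{P} = \mathbf{L}_1\mathbf{L}_1^\top$ and expanding the quadratic form $(x_r - x_s)^\top\mathbf{P}(x_r-x_s)$, the double sum splits into four terms; the mean-centering assumption $\sum_r x_r = 0$ annihilates the two cross terms, leaving
\[ \sum_{r,s=1}^{n} \hat d_{rs}^2 = 2n\sum_{r=1}^{n} x_r^\top \mathbf{P}\, x_r = 2n\,\tr\!\big(\mathbf{L}_1^\top \bX^\top\bX\,\mathbf{L}_1\big), \]
where I have used $\sum_r x_r x_r^\top = \bX^\top\bX$ together with the cyclic property of the trace. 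Thus the problem becomes: maximize $\tr(\mathbf{L}_1^\top\mathbf{S}\,\mathbf{L}_1)$ over all $m\times k$ matrices $\mathbf{L}_1$ with $\mathbf{L}_1^\top\mathbf{L}_1 = \bI$, where $\mathbf{S} = \bX^\top\bX$ is symmetric and positive semi-definite.

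The final step, which I expect to be the crux, is the eigenvalue optimization. By the Rayleigh--Ritz / Ky Fan theorem, $\tr(\mathbf{L}_1^\top\mathbf{S}\,\mathbf{L}_1)$ is maximized, with value equal to the sum of the $k$ largest eigenvalues of $\mathbf{S}$, precisely when the columns of $\mathbf{L}_1$ span the eigenspace of $\mathbf{S} = \bX^\top\bX$ associated with its $k$ largest eigenvalues. It then remains to identify this optimal projection with the principal coordinates produced by classical scaling. Via the singular value decomposition $\bX = \mathbf{U}\,\mathbf{\Sigma}\,\mathbf{V}^\top$, the eigenvectors of $\bX^\top\bX$ are the columns of $\mathbf{V}$ (the principal axes), while the eigenvectors of the centered inner-product matrix $\bB = \bX\bX^\top$ are the columns of $\mathbf{U}$; choosing $\mathbf{L}_1 = \mathbf{V}_k$ yields $\bX\mathbf{L}_1 = \mathbf{U}_k\mathbf{\Sigma}_k = \bGamma_k\bLambda_k^{1/2}$, which is exactly the $k$-dimensional principal-coordinate embedding of Section~\ref{sec: cMDS}. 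Hence the objective is minimized by projecting $\bX$ onto its principal coordinates, as claimed. The only genuinely nontrivial ingredient is the Ky Fan trace bound; the centering reduction, the Pythagorean splitting, and the bookkeeping evaluation of the double sum are all routine.
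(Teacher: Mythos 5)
Your proposal is correct. Note that the paper does not prove this statement at all---it is quoted verbatim as Theorem~14.4.1 of the cited Mardia--Kent--Bibby text---so there is no internal proof to compare against; your argument (centering, the Pythagorean splitting $d_{rs}^2 - \hat d_{rs}^2 = \|\mathbf{L}_2^\top(x_r - x_s)\|^2$, reduction to maximizing $\tr(\mathbf{L}_1^\top\bX^\top\bX\,\mathbf{L}_1)$, the Ky Fan bound, and the SVD identification $\bX\mathbf{V}_k = \bGamma_k\bLambda_k^{1/2}$) is exactly the standard argument given in that reference. The only step worth making explicit is the identity $\bB = \bH\bA\bH = \bX\bX^\top$ for a centered configuration, which justifies calling $\bX\mathbf{V}_k$ the principal coordinates of Section~\ref{sec: cMDS}.
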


This theorem states that in this setting, MDS minimizes the sum of squared errors in distances, over all possible projections.
In the following paragraph, we show that an analogous result is true for the sum of squared errors in inner-products, i.e., for the loss function $\strain$.

Let $\bD$ be a dissimilarity matrix, and let $\bB = \bH\bA\bH$.
A measure of the goodness of fit of MDS, even in the case when $\bD$ is not Euclidean, can be obtained as follows~\cite{bibby1979multivariate}.
If $\hat{\bX}$ is a fitted configuration in $\R^m$ with centered inner-product matrix $\hat{\bB}$, then a measure of the discrepancy between $\bB$ and $\hat{\bB}$ is the following $\strain$ function~\cite{mardia1978some},
\begin{equation}\label{eq:optimization}
\mathrm{tr}((\bB-\hat{\bB})^2)=\sum\limits_{i,j=1}^n(b_{i,j}-\hat{b}_{i,j})^2.
\end{equation}

\begin{theorem}~\cite[Theorem~14.4.2]{bibby1979multivariate}\label{thm: strain-minimization-cMDS}
Let $\bD$ be a dissimilarity matrix (not necessarily Euclidean).
Then for fixed $m$, the $\strain$ function in \eqref{eq:optimization} is minimized over all configurations $\hat {\bX}$ in $m$
dimensions when $\hat {\bX}$ is the classical solution to the MDS problem.
\end{theorem}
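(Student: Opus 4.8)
The plan is to recognize the $\strain$ function as a squared Frobenius distance and then to solve the resulting constrained low-rank approximation problem. First I would rewrite the objective in~\eqref{eq:optimization} as $\tr((\bB-\hat{\bB})^2)=\|\bB-\hat{\bB}\|_F^2$, which is valid because $\bB-\hat{\bB}$ is symmetric. The admissible matrices $\hat{\bB}$ are exactly the centered inner-product matrices of $m$-dimensional configurations, i.e.\ the symmetric positive semi-definite matrices of rank at most $m$ satisfying $\hat{\bB}\mathbf{1}=0$. Thus the theorem reduces to showing that, among all such $\hat{\bB}$, the classical solution $\hat{\bB}=\bGamma_m\bLambda_m\bGamma_m^\top=\bX\bX^\top$ minimizes the Frobenius distance to $\bB$. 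This is an Eckart--Young--type statement, but with the extra constraint that $\hat{\bB}$ be positive semi-definite, which matters precisely because $\bB$ need not be Euclidean and so may carry negative eigenvalues.

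Next I would parametrize the feasible set by its own spectral decomposition, writing $\hat{\bB}=\sum_{k=1}^m \mu_k v_k v_k^\top$ with $\mu_k\ge 0$ and $\{v_k\}$ orthonormal, and expand
\begin{equation*}
\|\bB-\hat{\bB}\|_F^2=\tr(\bB^2)-2\tr(\bB\hat{\bB})+\tr(\hat{\bB}^2)=\tr(\bB^2)+\sum_{k=1}^m\bigl(\mu_k^2-2\mu_k\,v_k^\top\bB v_k\bigr).
\end{equation*}
For fixed directions $v_k$ this is separable and quadratic in each $\mu_k$; completing the square and using $\mu_k\ge 0$ shows the inner minimum is attained at $\mu_k=\max\{v_k^\top\bB v_k,0\}$, reducing the problem to maximizing $\sum_{k=1}^m\bigl(\max\{v_k^\top\bB v_k,0\}\bigr)^2$ over orthonormal families $\{v_1,\dots,v_m\}$.

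To finish I would bring in the eigendecomposition $\bB=\bGamma\bLambda\bGamma^\top$ with $\lambda_1\ge\cdots\ge\lambda_n$, set $c_{ki}=v_k^\top\gamma_i$ so that $v_k^\top\bB v_k=\sum_i\lambda_i c_{ki}^2$ is a convex combination of the eigenvalues, and apply Jensen's inequality to the convex map $x\mapsto(\max\{x,0\})^2$ to obtain $\sum_k(\max\{v_k^\top\bB v_k,0\})^2\le\sum_i w_i(\max\{\lambda_i,0\})^2$ with weights $w_i=\sum_{k=1}^m c_{ki}^2\in[0,1]$ that sum to $m$. The resulting linear program is maximized by placing unit weight on the $m$ largest values $(\max\{\lambda_i,0\})^2$; since the top $m$ eigenvalues are assumed positive, this bound equals $\sum_{i=1}^m\lambda_i^2$ and is attained at $v_k=\gamma_k$. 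Hence the minimum of the $\strain$ is $\tr(\bB^2)-\sum_{i=1}^m\lambda_i^2$, realized by $\hat{\bB}=\bGamma_m\bLambda_m\bGamma_m^\top$, which is exactly the classical configuration $\bX=\bGamma_m\bLambda_m^{1/2}$.

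The main obstacle I anticipate is the bookkeeping forced by the positive semi-definiteness constraint: without it one could simply invoke Eckart--Young and keep the eigenvalues of largest modulus, but because $\bB$ may carry large negative eigenvalues I must track the truncation $\max\{\cdot,0\}$ carefully through both the optimization over $\mu_k$ and the majorization step. A secondary point to verify is that the optimizer automatically lies in the centered feasible set: since $\bH\mathbf{1}=0$ forces $\mathbf{1}$ into the kernel of $\bB$ with eigenvalue $0$, the top $m$ eigenvectors are orthogonal to $\mathbf{1}$, so $\bGamma_m\bLambda_m\bGamma_m^\top\mathbf{1}=0$ and the unconstrained minimizer is indeed a valid centered configuration.
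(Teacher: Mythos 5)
Your proof is correct, and it takes a genuinely different route from the paper's. Note first that the paper does not prove this finite statement directly (it cites Mardia--Kent--Bibby); its own argument appears in the proof of the infinite analogue, Theorem~\ref{Thm: infinite-mds-optimization}, which follows Trosset's two-stage scheme: there one \emph{fixes the candidate spectrum} $\hat{\lambda}_1\ge\hat{\lambda}_2\ge\cdots\ge 0$, expands the strain as $\tr(S_B^2)-2\tr(S_BT_GS_{\hat{B}}T_G')+\tr(S_{\hat{B}}^2)$, maximizes the cross term over the orthogonal alignment $T_G$ (a greedy argument on the weights $h_i=\sum_j\hat{\lambda}_j\langle\hat{\phi}_j,\phi_i\rangle^2$, concluding that the eigenvectors of $\hat{B}$ should coincide with those of $B$), and only then optimizes the eigenvalues via the truncation lemmas (Lemmas~\ref{lem: closest-nonneg-l2} and~\ref{lem: closest-nonneg-Rm}). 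You do the two stages in the opposite order and with different tools: you eliminate the eigenvalues first, in closed form ($\mu_k=\max\{v_k^\top\bB v_k,0\}$, so the truncation at zero falls out of the sign constraint rather than being imposed by a separate lemma), and then optimize the directions by Jensen's inequality applied to $x\mapsto(\max\{x,0\})^2$ together with a linear-programming/majorization step over the weights $w_i\in[0,1]$, $\sum_i w_i=m$. Your version buys an elementary, fully self-contained finite-dimensional proof in which the alignment step is airtight (the paper's greedy choice of $h_1, h_2,\ldots$ requires some care, since the $h_i$ satisfy more constraints than just $h_i\ge 0$ and a fixed sum), and you also verify explicitly that the unconstrained optimizer is automatically centered, a point the paper glosses over. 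What the paper's ordering buys is that its argument is phrased entirely in terms of orthogonal and Hilbert--Schmidt operators and traces, so it lifts essentially verbatim to compact self-adjoint operators on $L^2(X,\mu)$ --- which is the thesis's actual goal; your spectral parametrization plus Jensen/LP argument would need additional care in that setting (attainment of suprema over infinite orthonormal systems and interchange of infinite sums).
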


The reader is referred to~\cite[Section~2.4]{cox2000multidimensional} for a summary of a related optimization with a different normalization, due to Sammon~\cite{sammon1969nonlinear}.

\subsection{Distance Scaling} 
Some popular metric MDS methods that attempt to minimize distances (versus inner-products) include
\begin{itemize}
\item least squares scaling, which minimizes a variation of loss functions, and
\item metric Scaling by Majorising a Complicated Function (SMACOF), which minimizes a form of the $\stress$ function.
\end{itemize}
The reader is referred to~\cite{cox2000multidimensional} for a description of least squares scaling and to~\cite{borg2005modern, cox2000multidimensional} for the theory of SMACOF and The Majorisation Algorithm.

\section{Non-Metric Multidimensional Scaling}
Non-Metric Multidimensional Scaling assumes that only the ranks or orderings of the dissimilarites are known.
Hence, this method produces a map which tries to reproduce these ranks and not the observed or actual dissimilarities. 
Thus, only the ordering of the dissimilarities is relevant to the methods of approximations. 
In~\cite[Chapter 3]{cox2000multidimensional}, the authors present the underlying theory of non-metric multidimensional
scaling developed in the 1960s, including Kruskal's method.
Other methods that fall under Non-Metric Multidimensional Scaling include Non-Metric SMACOF and Sammon Mapping.

\section{Simple Examples: Visualization}

In this section, we consider three simple dissimilarity matrices (input of MDS) and their Euclidean embeddings in $\R^2$ or $\R^3$ (output of MDS). 
The first two are Euclidean distance matrices, whereas the third is non-Euclidean.

\begin{figure}[h] 
    \subfloat[MDS embedding of $D_1$ into $\R^2$.] {\label{fig1:sub:subfigure1}\includegraphics[width=0.55\textwidth]{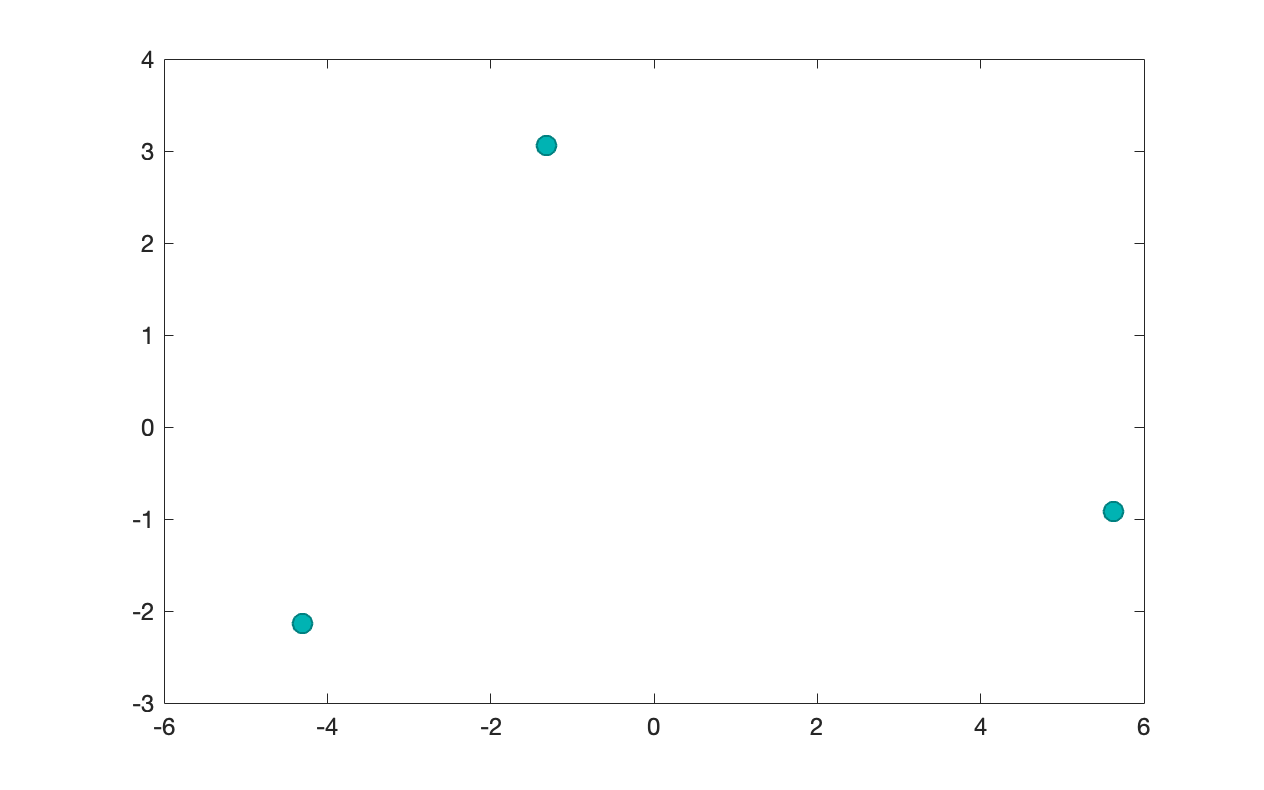}} \\
    
    \subfloat[MDS embedding of $D_2$ into $\R^3$.]{\label{fig1:sub:subfigure2}\includegraphics[width=0.55\textwidth]{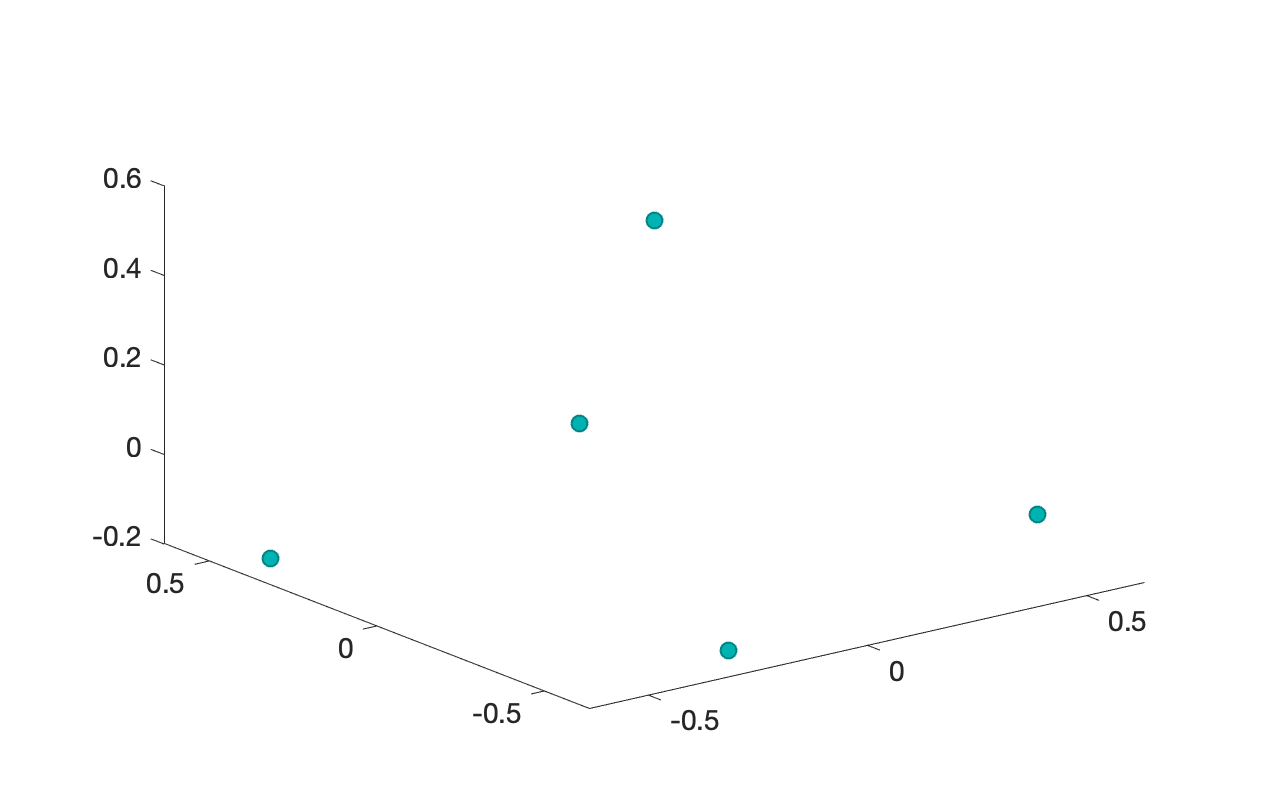}}
    
    \subfloat[MDS embedding of $D_3$ into $\R^2$.]{\label{fig1:sub:subfigure3}\includegraphics[width=0.55\textwidth]{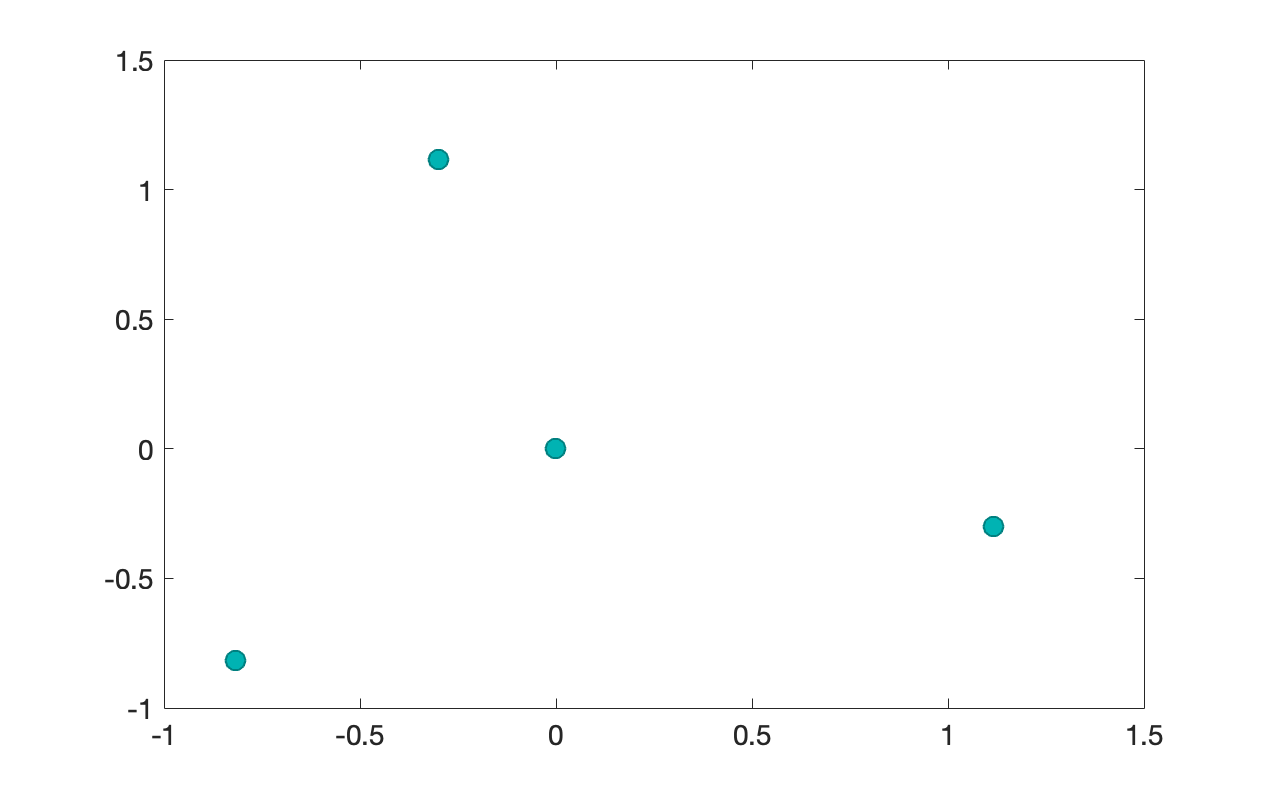}}

    \caption{MDS embeddings into $\R^2$ or $\R^3$ of the three dissimilarity matrices $D_1$, $D_2$, and $D_3$.}

    \label{fig: fig1} 

\end{figure}

\begin{enumerate}
\item Consider the following dissimilarity matrix, 
\[D_1 = \begin{pmatrix}
 0 & 6 & 8  \\
 6 & 0  & 10  \\
 8 & 10 & 0
\end{pmatrix},\]
which is an example of a dissimilarity matrix that can be isometrically embedded in $\R^2$ (Figure~\ref{fig1:sub:subfigure1}) but not in $\R^1$.

\item Consider the following dissimilarity matrix, \[ D_2 = \begin{pmatrix}
 0 & 1& 1 &\sqrt 2 & 1 \\
 1 & 0  & \sqrt 2 & 1 & 1  \\
 1 & \sqrt 2 & 0 & 1 & 1\\
 \sqrt 2 & 1 & 1  & 0 & 1\\
 1 & 1 & 1 & 1 & 0\\
\end{pmatrix},\] which is an example of a dissimilarity matrix that can be isometrically embedded in $\R^3$ (Figure~\ref{fig1:sub:subfigure2}) but not in $\R^2$.

\item Consider the following dissimilarity matrix, \[ D_3 = \begin{pmatrix}
 0 & 2& 2 & 1 \\
 2 & 0  &  2  & 1  \\
 2 &  2 & 0 & 1\\
1 & 1 & 1  & 0
\end{pmatrix},\]
which is an example of a dissimilarity matrix that cannot be isometrically embedded into any Euclidean space.
Indeed, label the points in the metric space $x_1, x_2, x_3, x_4$ in order of their row/column in $D_3$.
In any isometric embedding in $\R^n$, the points $x_1, x_2, x_3$ would need to get mapped to an equilateral triangle.
Then the point $x_4$ would need to get mapped to the midpoint of each edge in this triangle, which is impossible in Euclidean space.
Figure~\ref{fig1:sub:subfigure3} shows the embedding of this metric space in $\R^2$.
\end{enumerate}

\chapter{Operator Theory}
\label{chap: Operator Theory}

This section serves to inform the reader on some of the concepts in infinite-dimensional linear algebra and operator theory used throughout our work.

\section{Kernels and Operators}
We denote by $L^2(X, \mu)$  the set of square integrable $L^2$-functions with respect to the measure $\mu$. 
We note that $L^2(X, \mu)$ is furthermore a Hilbert space, after equipping it with the inner product given by
\[\langle f,g \rangle = \int \limits_X fg\ d\mu.\]

\begin{definition} A measurable function $f$ on $X \times X$ is said to be \emph{square-integrable} if satisfies the following three conditions~\cite{smithies1970, kanwal2013linear}: 
\begin{itemize}
\item $f(x,s)$ is a measurable function of $(x,s)\in X \times X$, with
\[ \int\limits_X \int\limits_X |f(x,s)|^2 \mu (\diff x) \mu (\diff s) < \infty; \]
\item for each $s\in X$, the function $f(x,s)$ is a measurable function in $x$, with
\[ \int\limits_X |f(x,s)|^2 \mu (\diff x) < \infty; \]
\item for each $x\in X$, the function $f(x,s)$ is a measurable function in $s$, with
\[ \int\limits_X |f(x,s)|^2 \mu (\diff s)< \infty .\]
\end{itemize}
\end{definition}

The $L^2$-norm of a square-integrable function is given by
\[\int \limits_X \int \limits_X |f(x,s)| ^2 \mu(\diff x) \mu(\diff s) < \infty.\] 
We denote by $L^2_{\mu \otimes \mu }(X\times X)$ the set of square integrable functions with respect to the measure $\mu \otimes \mu$. 

\begin{definition}
A set $\{f_i\}_{i \in \N}$ of real-valued functions $f_i \in L^2(X, \mu)$ is said to be \emph{orthonormal} if \[ \langle f_i , f_j \rangle = \delta_{ij}. \]
\end{definition}

When it is clear from the context, we will simply write $L^2(X)$ and $L^2(X\times X)$ instead of $L^2(X, \mu)$ and $L^2_{\mu \otimes \mu }(X\times X)$, respectively.

In this context, a \emph{real-valued $L^2$-kernel} $K \colon X \times X \to \R$ is a continuous measurable square-integrable function i.e.\ $K \in L^2_{\mu \otimes \mu }(X\times X)$.
Most of the kernels that we define in our work are symmetric.

\begin{definition}
A kernel $K$ is \emph{symmetric} (or \emph{complex symmetric} or \emph{Hermitian}) if 
\[ K(x,s) = \overline{K(s,x)} \quad\mbox{for all }x,s\in X, \]
where the overline denotes the complex conjuguate.
In the case of a real kernel, the symmetry reduces to the equality
\[ K(x,s) = K(s,x).\]
\end{definition} 

\begin{definition}\label{def: psdk}
A symmetric function ${\displaystyle K:{X}\times {X}\to \mathbb {R} }$ is called a \emph{positive semi-definite kernel} on $X$ if
\[{\sum\limits _{i=1}^{n}\sum\limits _{j=1}^{n}c_{i}c_{j}K(x_{i},x_{j})\geq 0}\]
holds for any $m\in \N$, any $x_1,\dots ,x_m\in X$, and any $c_1,\dots,c_m\in \R$.
\end{definition}

We remark that the above definition is equivalent to saying that for all vectors $c \in \R^m$, we have
\[c^T K c \geq 0, \quad \mbox{where } K_{ij} = K(x_i, x_j).\]
At least in the case when $X$ is a compact subspace of $\R^m$ (and probably more generally), we have the following equivalent definition.

\begin{definition}
Let $X$ be a compact subspace of $\R^m$, and let $K\in L^2_{\mu \otimes \mu }(X\times X)$ be a real-valued symmetric kernel.
Then $K(x,s)$ is a \emph{positive semi-definite kernel} on $X$ if
\[ \int _{X \times X} K(x, s)f(x)f(s)d(\mu \otimes \mu)(x,s) \geq 0 \]
for any $f \in L^2(X, \mu)$.
\end{definition}

\begin{definition}
Let $\C^{m \times n}$ denote the space of all $m\times n$ matrices with complex entries. We define an inner product on $\C^{m \times n}$ by 
\[(A,B) = \tr(A^*B), \]
where $\tr$ denotes the trace and $^*$ denotes the Hermitian conjugate of a matrix. If $A = (a_{ij})$ and $B = (b_{ij})$, then 
\[ (A,B) = \sum\limits_{i=1}^m \sum\limits_{j=1}^n \overline{{a}_{ij}}b_{ij}.\]
The corresponding norm,
\[\| A \| = \left( \sum\limits_{i=1} ^{m} \sum\limits_{j=1} ^{n} {|a_{ij}|}^2 \right)^{1/2},\]
is called the \emph{Hilbert--Schmidt norm}.
\end{definition}

For $\cH_1$ and $\cH_2$ Hilbert spaces, we let $\cB(\cH_1, \cH_2)$ denote the set of bounded linear operators from $\cH_1$ to $\cH_2$.
Similarly, for $\cH$ a Hilbert space, we let $\cB(\cH)$ denote the set of bounded linear operators from $\cH$ to itself.

\begin{definition}
Let $T \in \cB(\cH_1, \cH_2)$. Then there exists a unique operator from $\cH_2$ into $\cH_1$, denoted by $T^*$ (or $T'$), such that
\[\langle Tx,y\rangle = \langle x,T^{*}y \rangle \quad \forall x \in \cH_1,y \in \cH_2.\]
This operator $T^*$ is called the \emph{adjoint} of $T$.
\end{definition}

\begin{definition}
Suppose $T \in \cB(\cH)$. If $T = T^*$, then $T$ is called \emph{self-adjoint}.
\end{definition}

\begin{definition}\label{def: trace-class}
A bounded linear operator $A\in\cB(\cH)$ over a separable Hilbert space $\cH$ is said to be in the \emph{trace class} if for some (and hence all) orthonormal bases $\{e_k\}_{k \in \N}$ of $\cH$, the sum of positive terms
\[\|A\|_{1}= \tr |A| =\sum _{k}\langle (A^{*}A)^{1/2}\,e_{k},e_{k}\rangle \]
is finite. In this case, the trace of A, which is given by the sum
\[ \tr (A)=\sum _{k}\langle Ae_{k},e_{k}\rangle, \]
is absolutely convergent and is independent of the choice of the orthonormal basis.
\end{definition}

\begin{definition} \label{def: HS-norm}
A \emph{Hilbert--Schmidt operator} is a bounded linear operator $A$ on a Hilbert space $\cH$ with finite Hilbert--Schmidt norm
\[{\|A\|_{HS}^{2}={\tr}(A^{*}A) =\sum\limits _{i\in I}\|Ae_{i}\|^{2}}, \]
where $\| \cdot \|$ is the norm of $\cH$, $ \{e_{i}:i\in I\}$ is an orthonormal basis of $\cH$, and $\tr$ is the trace of a nonnegative self-adjoint operator. 
\end{definition}

Note that the index set need not be countable.
This definition is independent of the choice of the basis, and therefore
\[ \|A\|_{HS}^{2}=\sum\limits _{i,j}|A_{i,j}|^{2}, \]
where  $A_{i,j}=\langle e_{i},Ae_{j}\rangle$. In Euclidean space, the Hilbert--Schmidt norm is also called the Frobenius norm.

When $\cH$ is finite-dimensional, every operator is trace class, and this definition of the trace of $A$ coincides with the definition of the trace of a matrix.

\begin{proposition}\label{prop:trace-conj}
If $A$ is bounded and $B$ is trace class (or if  $A$ and $B$ are Hilbert--Schmidt), then $AB$ and $BA$ are also trace class, and $\tr(AB) = \tr(BA)$.
\end{proposition}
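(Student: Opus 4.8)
The plan is to handle the two hypotheses in tandem by first establishing trace-class membership of the products and then proving cyclicity of the trace, ultimately reducing the general bounded-times-trace-class statement to the cleaner Hilbert--Schmidt situation. First I would verify that $AB$ and $BA$ are trace class. When $A$ and $B$ are both Hilbert--Schmidt, this follows from the submultiplicative bound $\|ST\|_1 \le \|S\|_{HS}\|T\|_{HS}$, a consequence of the Cauchy--Schwarz inequality applied to the matrix coefficients. When $A$ is bounded and $B$ is trace class, I would instead invoke the ideal bounds $\|AB\|_1 \le \|A\|_{\mathrm{op}}\|B\|_1$ and $\|BA\|_1 \le \|A\|_{\mathrm{op}}\|B\|_1$, so that in either case both products have finite trace norm and Definition~\ref{def: trace-class} applies.

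Next I would prove $\tr(AB) = \tr(BA)$ in the Hilbert--Schmidt case. Fixing an orthonormal basis $\{e_k\}_{k\in\N}$ of $\cH$ and using the definition of the trace together with a resolution of the identity, I would expand
\[ \tr(AB) = \sum_k \langle AB e_k, e_k\rangle = \sum_{j,k} \langle A e_j, e_k\rangle\,\langle B e_k, e_j\rangle, \]
and symmetrically
\[ \tr(BA) = \sum_k \langle BA e_k, e_k\rangle = \sum_{j,k} \langle A e_j, e_k\rangle\,\langle B e_k, e_j\rangle, \]
so that the two expressions agree term by term once the order of summation is justified.

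The main obstacle, and essentially the only analytic subtlety, is justifying the interchange of the two sums. Here I would use absolute convergence: by Cauchy--Schwarz,
\[ \sum_{j,k} |\langle A e_j, e_k\rangle|\,|\langle B e_k, e_j\rangle| \le \|A\|_{HS}\,\|B\|_{HS} < \infty, \]
which permits the rearrangement of the double series. This is where the Hilbert--Schmidt hypothesis is genuinely used, and it is the step one must be careful about, since a naive interchange for arbitrary bounded operators would not be valid.

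Finally, to upgrade to the case where $A$ is bounded and $B$ is merely trace class, I would factor $B$ through its polar decomposition $B = U|B|$. Since $\||B|^{1/2}\|_{HS}^2 = \tr|B| = \|B\|_1 < \infty$ by Definition~\ref{def: HS-norm}, the operator $|B|^{1/2}$ is Hilbert--Schmidt, and writing $B = (U|B|^{1/2})\,|B|^{1/2}$ expresses both $AB$ and $BA$ as products of Hilbert--Schmidt operators (a bounded operator times a Hilbert--Schmidt operator remaining Hilbert--Schmidt). Cycling the Hilbert--Schmidt factors via the identity already established then yields $\tr(AB) = \tr(|B|^{1/2}AU|B|^{1/2}) = \tr(BA)$, reducing the general statement to the Hilbert--Schmidt case and completing the argument.
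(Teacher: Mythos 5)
Your proof is correct, but there is nothing in the paper to compare it against: Proposition~\ref{prop:trace-conj} is stated without proof, as a standard fact of operator theory that is then used in the proof of Theorem~\ref{Thm: infinite-mds-optimization}. Your argument is the standard textbook one, and all the key steps are sound: the expansion of $\tr(AB)$ and $\tr(BA)$ into the double sum $\sum_{j,k}\langle Ae_j,e_k\rangle\langle Be_k,e_j\rangle$, the Cauchy--Schwarz bound $\sum_{j,k}|\langle Ae_j,e_k\rangle|\,|\langle Be_k,e_j\rangle|\le\|A\|_{HS}\|B\|_{HS}$ justifying the rearrangement (this is indeed the one genuinely analytic point), and the reduction of the bounded-times-trace-class case to the Hilbert--Schmidt case via the polar decomposition $B=U|B|$ and the factorization $B=(U|B|^{1/2})|B|^{1/2}$, with both $\tr(AB)$ and $\tr(BA)$ equal to $\tr\bigl(|B|^{1/2}AU|B|^{1/2}\bigr)$. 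One stylistic remark: invoking the ideal bounds $\|AB\|_{1}\le\|A\|_{\mathrm{op}}\|B\|_{1}$ and $\|BA\|_{1}\le\|A\|_{\mathrm{op}}\|B\|_{1}$ at the outset is slightly circular, since trace-class membership of $AB$ and $BA$ is part of what the proposition asserts; but your own polar-decomposition step makes this citation unnecessary, because it exhibits $AB$ and $BA$ as products of two Hilbert--Schmidt operators, which are trace class by the first fact you cite. Dropping that redundant appeal makes the argument self-contained modulo genuinely standard facts (boundedness of $U$, bounded times Hilbert--Schmidt is Hilbert--Schmidt, and $\||B|^{1/2}\|_{HS}^{2}=\tr|B|<\infty$).
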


\begin{definition}[Hilbert--Schmidt Integral Operator]\label{def: Kernel Operator}
Let $(X, \Omega, \mu)$ be a $\sigma$-finite measure
space, and let $K \in L^2_{\mu \otimes \mu}(X \times X)$.
Then the integral operator
\[ [T_K\phi](x) = \int \limits_X K(x,s) \phi(s) \mu(\diff s)\]
defines a linear mapping acting from the space $L^2 (X, \mu)$ into itself.
\end{definition}

Hilbert--Schmidt integral operators are both continuous (and hence bounded) and compact operators.
 
Suppose $ \{e_{i}\}_{i\in \N}$ is an orthonormal basis of $L^2(X, \mu)$.  Define $e_{nm}(x,s) = e_n(s) e_m(x)$. Then, $({e_{mn}})_{m,n \in \N}$ forms an orthonormal basis of $L^2_{\mu \otimes \mu}(X \times X)$.
 
\begin{proposition} \label{prop: HS-kernel norm}
$ \| T_K \|^2 _{HS} = \| K  \|^2 _{L^2(X \times X)}$.
\end{proposition}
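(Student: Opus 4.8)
The plan is to evaluate both sides by expanding in the two orthonormal bases already introduced: the basis $\{e_i\}_{i\in\N}$ of $L^2(X,\mu)$ and the induced basis $\{e_{nm}\}_{n,m\in\N}$ of $L^2_{\mu\otimes\mu}(X\times X)$, where $e_{nm}(x,s)=e_n(s)e_m(x)$. The key identity I would establish is that the matrix coefficients of the operator $T_K$ in the basis $\{e_m\}$ coincide with the Fourier coefficients of the kernel $K$ in the basis $\{e_{nm}\}$. Once this is in hand, the result follows by applying the coefficient form of the Hilbert--Schmidt norm, $\|A\|_{HS}^2=\sum_{i,j}|A_{i,j}|^2$ with $A_{i,j}=\langle e_i,Ae_j\rangle$, on the one side, and Parseval's identity on the other.

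Concretely, I would first compute, for each pair $m,n$, the coefficient $\langle e_m, T_K e_n\rangle$. Unwinding the definition of $T_K$ from Definition~\ref{def: Kernel Operator} gives
\[
\langle e_m, T_K e_n\rangle = \int_X e_m(x)\left(\int_X K(x,s)e_n(s)\,\mu(\diff s)\right)\mu(\diff x).
\]
The crucial move is to interchange the order of integration and rewrite this as a single integral over $X\times X$ against $(\mu\otimes\mu)$, namely $\int_{X\times X} K(x,s)\,e_m(x)e_n(s)\,\diff(\mu\otimes\mu)(x,s)$. Since $e_m(x)e_n(s)=e_{nm}(x,s)$ by the definition of the product basis, and since the functions are real-valued, this integral is exactly $\langle K, e_{nm}\rangle_{L^2(X\times X)}$. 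Thus $\langle e_m, T_K e_n\rangle=\langle K, e_{nm}\rangle_{L^2(X\times X)}$.

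With this identification, I would finish by summing squares. Using the coefficient expression for the Hilbert--Schmidt norm relative to $\{e_m\}$ and then the identity just proved,
\[
\|T_K\|_{HS}^2=\sum_{m,n}\bigl|\langle e_m, T_K e_n\rangle\bigr|^2=\sum_{m,n}\bigl|\langle K, e_{nm}\rangle\bigr|^2.
\]
As $(m,n)$ ranges over $\N\times\N$, the index $(n,m)$ also ranges over all of $\N\times\N$, so the right-hand side is precisely the sum of squared Fourier coefficients of $K$ against the full orthonormal basis $\{e_{nm}\}$ of $L^2(X\times X)$, which equals $\|K\|_{L^2(X\times X)}^2$ by Parseval.

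I expect the main obstacle to be the rigorous justification of the interchange of integration order in the middle step. This requires Fubini--Tonelli, whose hypotheses are met because $K\in L^2_{\mu\otimes\mu}(X\times X)$ together with $e_m,e_n\in L^2(X,\mu)$ guarantees that $K(x,s)e_m(x)e_n(s)$ is integrable with respect to $\mu\otimes\mu$ (by Cauchy--Schwarz on the product space, since $\mu$ is a finite measure). The remaining points---orthonormality of $\{e_{nm}\}$ and the reindexing of the double sum---are routine given the basis facts recorded just before the statement.
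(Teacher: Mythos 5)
Your proposal is correct and follows essentially the same route as the paper's own proof: both identify the matrix coefficients $\langle e_m, T_K e_n\rangle$ with the Fourier coefficients $\langle K, e_{nm}\rangle_{L^2(X\times X)}$ by combining the two integrals into one over $X\times X$, and then conclude via Parseval in the product basis $\{e_{nm}\}$. The only difference is cosmetic---you invoke the coefficient form $\|T_K\|_{HS}^2=\sum_{m,n}|\langle e_m,T_Ke_n\rangle|^2$ directly and explicitly justify the Fubini interchange, whereas the paper starts from $\sum_n\|T_Ke_n\|^2$ and leaves that interchange implicit.
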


\begin{proof} By Definition~\ref{def: HS-norm}, we have
\begin{align*}
\| T_K \|^2 _{HS} & = \sum \limits _n \| T_K e_n \|_{L^2(X)} \\
& = \sum\limits_{n,m} | {\langle e_m, T_K e_n \rangle }_{L^2(X)} | ^2\\
& = \sum\limits_{n,m} \left| \left\langle e_m, \int K(x,s)e_n(x) \mu (\diff x) \right\rangle _ {L^2(X)}  \right|^2 \\
& = \sum \limits_{n,m} \left| \int\limits_X \int \limits_X K(x,s) e_n(x) e_m(s) \mu (\diff x) \mu (\diff s)  \right|^2 \\
& = \sum \limits _{n,m} |\langle K, e_{nm} \rangle  _{L^2 (X \times X)} |^2 \\
& =  \| K  \|^2 _{L^2(X \times X)}.
\end{align*}
\end{proof}

\begin{definition}
A Hilbert--Schmidt integral operator is a \emph{self-adjoint operator} if and only if $K(x,y) = \overline{K(y,x)}$ (i.e\ $K$ is a symmetric kernel) for almost all $(x,y) \in X \times X$ (with respect to $\mu \times \mu$).
\end{definition}

\begin{remark}\label{rem: relation to Strain}
Suppose $T_K$ a self-adjoint Hilbert-Schmidt integral operator. By Definition~\ref{def: HS-norm}, we have  \[ \| T_K \|^2 _{HS} = {\tr}(T_K^{*}T_K) = {\tr}((T_K)^2),\] and by Proposition~\ref{prop: HS-kernel norm} \[\| T_K \|^2_{HS} = \| K  \|^2 _{L^2(X \times X)} = \int \limits _X \int \limits _X  |K(x,s)| ^2  \mu (\diff x) \mu (\diff s).\]
These properties will be useful in defining a $\strain$ function for MDS of infinite metric measure spaces in Section~\ref{sec: strain minimization}.
\end{remark}

\begin{definition}\label{p.s.d.o}
A bounded self-adjoint operator $A$ on a Hilbert space $\cH$ is called a \emph{positive semi-definite operator} if $\langle Ax, x \rangle \geq 0$
for any $x \in \cH$.
\end{definition}

It follows that for every positive semi-definite operator $A$, the inner product $\langle Ax, x \rangle$ is real for every $x \in \cH$.
Thus, the eigenvalues of $A$, when they exist, are real.

\begin{definition}
A linear operator $T \colon L^2(X, \mu) \to L^2(X, \mu)$ is said to be \emph{orthogonal} (in \R) or \emph{unitary} (in \C) if $\langle f, g \rangle = \langle T(f) , T(g) \rangle$ for all $f$ and $g$ in $L^2(X, \mu)$.
\end{definition}

\begin{definition}\label{def:otimes}
Suppose $h, g \in L^2(X, \mu)$. Define the linear operator $h \otimes g \colon L^2(X, \mu) \to L^2(X, \mu)$ as follows
\[(h \otimes g)(f) = \langle g,f \rangle h.\]
Then $h \otimes g$ is a Hilbert--Schmidt integral operator associated with the $L^2$-kernel $K(x,s)=h(x)g(s)$.
\end{definition}

Suppose $\{e_n\}_{n\in \N}$ and $\{f_n\}_{n\in \N}$ are two distinct orthonormal bases for $L^2(X, \mu)$.
Define the linear operator $G \colon L^2(X, \mu) \to L^2(X, \mu)$ by 
\[G(\cdot) =\sum\limits_n f_n\otimes e_n =\sum\limits_n\langle e_n,\cdot \rangle f_n .\]

\begin{proposition}
$G$ is an orthogonal operator.
\end{proposition}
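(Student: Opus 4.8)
The plan is to verify directly that $G$ preserves inner products, i.e.\ that $\langle G\phi, G\psi\rangle = \langle \phi,\psi\rangle$ for all $\phi,\psi \in L^2(X,\mu)$, which is exactly the condition in the paper's definition of an orthogonal operator (note that this definition does not demand surjectivity, so we need nothing beyond the isometry property). As a first step I would confirm that $G$ is well-defined and bounded. For $\phi \in L^2(X,\mu)$, set $a_n = \langle e_n,\phi\rangle$. Since $\{e_n\}_{n\in\N}$ is an orthonormal basis, Parseval's identity gives $\sum_n a_n^2 = \|\phi\|^2 < \infty$; because $\{f_n\}_{n\in\N}$ is orthonormal, the partial sums of $\sum_n a_n f_n$ are Cauchy in $L^2(X,\mu)$, so $G\phi = \sum_n a_n f_n$ converges. (As a sanity check one sees $G(e_k)=f_k$, so $G$ is the change-of-basis map sending one orthonormal basis to the other.)

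Next I would compute the inner product. Writing $a_n = \langle e_n,\phi\rangle$ and $b_m = \langle e_m,\psi\rangle$, continuity of the inner product lets me expand
\[\langle G\phi, G\psi\rangle = \Big\langle \sum_n a_n f_n,\ \sum_m b_m f_m\Big\rangle = \sum_{n,m} a_n b_m \langle f_n, f_m\rangle.\]
Since $\{f_n\}$ is orthonormal, $\langle f_n,f_m\rangle = \delta_{nm}$, so the double sum collapses to $\sum_n a_n b_n = \sum_n \langle e_n,\phi\rangle\langle e_n,\psi\rangle$. Finally, Parseval's identity applied to the orthonormal basis $\{e_n\}$ identifies this last sum with $\langle \phi,\psi\rangle$, which establishes the claim.

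The only real subtlety, and the step I would write most carefully, is the interchange of the infinite sums with the inner product. This is justified by the (joint sequential) continuity of the inner product on $L^2(X,\mu)$ together with the $L^2$-convergence of the defining series established in the first step, so no term-by-term manipulation is performed outside the range where it is valid. Everything else is a routine application of orthonormality and Parseval, and since the functions here are real-valued, no complex conjugates intervene.
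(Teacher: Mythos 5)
Your proof is correct and follows essentially the same route as the paper's: expand $\langle G\phi, G\psi\rangle$ as a double sum, collapse it using orthonormality of $\{f_n\}$, and finish with Parseval's identity for the basis $\{e_n\}$. The only difference is that you additionally justify well-definedness of $G$ and the interchange of sums with the inner product, details the paper's proof takes for granted.
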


\begin{proof} For any $\phi, \psi \in L^2(x,\mu)$, we have
\begin{align*}
\langle G (\phi) , G (\psi) \rangle 
&= \left\langle \sum_i \langle e_i, \phi \rangle f_i , \sum_j \langle e_j, \psi \rangle f_j \right\rangle \\
& =\sum _{i,j} \left\langle \langle e_i, \phi \rangle f_i, \langle e_j, \psi\rangle f_j \right\rangle \\
& = \sum _{i,j} \langle e_i, \phi \rangle \langle e_j, \psi \rangle \langle f_i , f_j \rangle \\
& = \sum _{i} \langle e_i, \phi \rangle \langle e_i ,\psi \rangle\\
& = \langle \phi , \psi \rangle.
\end{align*}
Therefore, $G$ is an orthogonal operator.
\end{proof}

More generally, if $\{e_n\}_{n \in \N}$ and $\{f_n\}_{n \in \N}$ are any bases for $L^2(X, \mu)$, then $G=\sum\limits_n f_n\otimes e_n$ is a change-of-basis operator (from the $\{e_n\}_{n \in \N}$ basis to the $\{f_n\}_{n \in \N}$ basis), satisfying $G(e_i)=f_i$ for all $i$.

\section{The Spectral Theorem}
\begin{definition}
A complex number $\lambda \in \C$ is an \emph{eigenvalue} of $T \in \cB(\cH)$
if there exists a non-zero vector $x \in \cH$ such that $T x = \lambda x$. The vector $x$ is called an
\emph{eigenvector} for $T$ corresponding to the eigenvalue $\lambda$. Equivalently, $\lambda$ is an eigenvalue
of $T$ if and only if $T - \lambda I$ is not one-to-one.
\end{definition}

\begin{theorem}[Spectral theorem on compact self-adjoint operators]
\label{thm:spectral}
Let $\cH$ be a not necessarily separable Hilbert space, and suppose $T \in \cB (\cH)$ is compact self-adjoint operator.
Then $T$ has at most a countable number of nonzero eigenvalues $\lambda_n  \in \R$, with a corresponding orthonormal set $\{ e_n \}$ of eigenvectors such that
\[ T(\cdot) = \sum\limits_n \lambda_n \langle e_n, \cdot \rangle\ e_n.\]
Furthermore, the multiplicity of each nonzero eigenvalue is finite, zero is the only possible accumulation point of $\{ \lambda_n \}$, and if the set of non-zero eigenvalues is infinite then zero is necessarily an accumulation point.
\end{theorem}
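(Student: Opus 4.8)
The plan is to prove the representation by extracting eigenvalues one at a time in decreasing order of modulus, peeling each one off by passing to an orthogonal complement. First I would record the single elementary preliminary that is needed: eigenvalues are real, since if $Tx = \lambda x$ with $x \neq 0$ then $\lambda\langle x,x\rangle = \langle Tx,x\rangle = \langle x,Tx\rangle = \overline{\lambda}\langle x,x\rangle$ forces $\lambda = \overline{\lambda}$. Orthonormality of the eigenvectors $\{e_n\}$ will then come for free from the construction, since each new eigenvector is produced inside the orthogonal complement of the previous ones.

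The crux is an existence lemma: a nonzero compact self-adjoint operator $T$ has an eigenvalue $\lambda$ with $|\lambda| = \|T\|$. Here I would use the variational characterization $\|T\| = \sup_{\|x\|=1}|\langle Tx,x\rangle|$, valid for self-adjoint $T$, select unit vectors $x_n$ with $\langle Tx_n,x_n\rangle \to \lambda$ where $|\lambda| = \|T\|$, and estimate
\[\|Tx_n - \lambda x_n\|^2 \le \|T\|^2 - 2\lambda\langle Tx_n, x_n\rangle + \lambda^2 \to 0.\]
Compactness then lets me pass to a subsequence along which $Tx_{n_k}$ converges; because $\lambda \neq 0$, the vectors $x_{n_k}$ themselves converge to some unit vector $e$, and continuity of $T$ gives $Te = \lambda e$. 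This is the step I expect to be the main obstacle, since it is precisely where compactness and self-adjointness must be combined with care; everything afterward is organized bookkeeping.

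With the lemma in hand I would iterate. Set $\lambda_1 = \pm\|T\|$ with unit eigenvector $e_1$, observe that $\{e_1\}^\perp$ is $T$-invariant by self-adjointness and that the restriction of $T$ to it is again compact self-adjoint, and apply the lemma to produce $\lambda_2, e_2$, and so on, yielding $|\lambda_1| \ge |\lambda_2| \ge \cdots$. To see that the process either terminates or forces $\lambda_n \to 0$, I would argue by contradiction: if infinitely many satisfied $|\lambda_n| \ge \delta > 0$, then $\|Te_n - Te_m\|^2 = \lambda_n^2 + \lambda_m^2 \ge 2\delta^2$ for $n \neq m$, so $\{Te_n\}$ would have no convergent subsequence, contradicting compactness. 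This simultaneously gives finite multiplicity of each nonzero eigenvalue, shows zero is the only possible accumulation point, and shows the nonzero eigenvalues are at most countable even when $\cH$ is nonseparable.

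Finally I would establish the series. Let $M = \overline{\mathrm{span}}\{e_n\}$. The identity $T x = \sum_n \lambda_n \langle e_n, x\rangle e_n$ holds on $M$ by construction and continuity, so it remains to show $T$ vanishes on $M^\perp$. But $M^\perp$ is $T$-invariant, and $T|_{M^\perp}$ is compact self-adjoint with no nonzero eigenvalue, since any such eigenvector would already lie in $M$; the existence lemma then forces $\|T|_{M^\perp}\| = 0$, so $T|_{M^\perp} = 0$. Decomposing an arbitrary $x$ into its $M$-component and its $M^\perp$-component yields the claimed identity. The possible nonseparability of $\cH$ enters only through $M^\perp = \ker T$, on which $T$ is zero, and therefore does not disturb the countable spectral expansion.
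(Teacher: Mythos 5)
The paper states Theorem~\ref{thm:spectral} as background material in its operator theory chapter and gives no proof of it, so there is no argument of the paper's to compare yours against; your proposal has to stand on its own. Structurally it does: the variational existence lemma (a nonzero compact self-adjoint operator attains $\pm\|T\|$ as an eigenvalue, via $\|T\|=\sup_{\|x\|=1}|\langle Tx,x\rangle|$ and the estimate $\|Tx_n-\lambda x_n\|^2\le \|T\|^2-2\lambda\langle Tx_n,x_n\rangle+\lambda^2\to 0$), the peeling iteration through $T$-invariant orthogonal complements, and the orthogonality estimate $\|Te_n-Te_m\|^2=\lambda_n^2+\lambda_m^2$ that simultaneously rules out nonzero accumulation points, infinite multiplicity, and uncountably many nonzero eigenvalues --- this is the standard Riesz/Hilbert--Schmidt argument, and those steps are carried out correctly, including in the nonseparable setting, since the orthonormal set you construct is countable regardless of the size of $\cH$.

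The one step that does not hold up as written is the justification that $T|_{M^\perp}$ has no nonzero eigenvalue ``since any such eigenvector would already lie in $M$.'' That sentence is a non-sequitur: an eigenvector of $T|_{M^\perp}$ lies in $M^\perp$ by definition, so it cannot lie in $M$, and nothing proven up to that point says your iteration captures every eigenvector of $T$ --- that is essentially the statement you are trying to establish, so as phrased the step is circular. The repair is short and uses only what your construction already provides. If $v\in M^\perp$ were a unit eigenvector with $Tv=\mu v$ and $\mu\neq 0$, then $v\perp e_1,\ldots,e_n$ for every $n$, so the greedy choice in the iteration forces $|\lambda_{n+1}|=\|T|_{\{e_1,\ldots,e_n\}^\perp}\|\geq\|Tv\|=|\mu|>0$ for all $n$, contradicting termination in the finite case and $|\lambda_n|\to 0$ in the infinite case. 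Even more directly, you can skip the eigenvalue detour entirely: since $M^\perp\subseteq\{e_1,\ldots,e_n\}^\perp$ for every $n$, you have $\|T|_{M^\perp}\|\leq|\lambda_{n+1}|\to 0$ (or $=0$ at termination), hence $T|_{M^\perp}=0$ outright. With that repair, the proof is complete and correct.
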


A fundamental theorem that characterizes positive semi-definite kernels is the Generalized Mercer's Theorem, which states the following:

\begin{theorem}~\cite[Lemma 1]{kuhn1987eigenvalues}\label{Thm: Mercer's Theorem}
Let $X$ be a compact topological Hausdorff space equipped with a finite Borel measure $\mu$, and let $K \colon X \times  X \to \C$ be a continuous positive semi-definite kernel.
Then there exists a scalar sequence $\{ \lambda_n \} \in \ell_1$ with $\lambda_1 \geq \lambda_2 \geq \cdots \geq 0$, and an orthonormal system $\{ \phi_n\}$ in $L^2 (X, \mu)$ consisting of continuous functions only,
such that the expansion 
\begin{equation}\label{eq: Mercer}
K(x, s) = \sum\limits _{n=1} ^\infty \lambda_n\phi_n(x)\phi_n(s), \quad x, s \in \supp(\mu)
\end{equation} converges uniformly.
\end{theorem}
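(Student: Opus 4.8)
The plan is to realize $K$ through its associated integral operator $T_K$ and then read the expansion off the spectral theorem, reserving the real work for the proof of uniform convergence. First I would observe that since $X$ is compact and $K$ is continuous, $K$ is bounded and hence square-integrable with respect to $\mu\otimes\mu$; thus the operator $T_K$ of Definition~\ref{def: Kernel Operator} is Hilbert--Schmidt, and in particular compact, on $L^2(X,\mu)$. Because a positive semi-definite kernel is symmetric, $T_K$ is self-adjoint, and the integral form of positive semi-definiteness together with Definition~\ref{p.s.d.o} shows that $T_K$ is a positive semi-definite operator. Applying the spectral theorem (Theorem~\ref{thm:spectral}) then produces an at most countable family of nonzero eigenvalues with an orthonormal system $\{\phi_n\}$ of eigenvectors and the $L^2$-convergent representation $T_K(\cdot)=\sum_n\lambda_n\langle\phi_n,\cdot\rangle\phi_n$. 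Positivity of the operator forces $\lambda_n=\langle T_K\phi_n,\phi_n\rangle\ge 0$, and I may list the eigenvalues in decreasing order $\lambda_1\ge\lambda_2\ge\cdots\ge 0$.

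Next I would upgrade each eigenfunction to a continuous one: for $\lambda_n\neq 0$ the identity $\phi_n=\lambda_n^{-1}T_K\phi_n$ exhibits $\phi_n$ as $x\mapsto \lambda_n^{-1}\int_X K(x,s)\phi_n(s)\,\mu(\diff s)$, which is continuous because $K$ is continuous and $X$ is compact. For summability I would argue that the continuous positive semi-definite operator $T_K$ is trace class with $\sum_n \lambda_n = \tr(T_K) = \int_X K(x,x)\,\mu(\diff x)$; since $X$ is compact and $K$ continuous, the right-hand side is finite, so $\{\lambda_n\}\in\ell^1$.

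The hard part is the uniform convergence of \eqref{eq: Mercer}. I would set $K_N(x,s)=\sum_{n=1}^N\lambda_n\phi_n(x)\phi_n(s)$ and note that the residual $R_N=K-K_N$ is again a continuous positive semi-definite kernel, since its operator has spectrum $\{\lambda_n:n>N\}\cup\{0\}$; hence its diagonal is nonnegative, giving $\sum_{n=1}^N\lambda_n\phi_n(x)^2\le K(x,x)$ for every $x$. Thus the increasing sequence $g_N(x)=\sum_{n=1}^N\lambda_n\phi_n(x)^2$ of continuous functions converges pointwise to some $g(x)\le K(x,x)$, and combining this diagonal bound with the trace identity $\int_X(K(x,x)-g_N(x))\,\mu(\diff x)=\sum_{n>N}\lambda_n\to 0$ pins down $g(x)=K(x,x)$, a continuous limit. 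Dini's theorem on the compact space $\supp(\mu)$ then promotes $g_N\to K(x,x)$ to uniform convergence. Finally, Cauchy--Schwarz gives for $M\le N$
\[
\Big|\sum_{n=M}^N\lambda_n\phi_n(x)\phi_n(s)\Big|\le\Big(\sum_{n=M}^N\lambda_n\phi_n(x)^2\Big)^{1/2}\Big(\sum_{n=M}^N\lambda_n\phi_n(s)^2\Big)^{1/2},
\]
so uniform smallness of the diagonal tails forces the full series to be uniformly Cauchy, hence uniformly convergent to a continuous kernel that agrees with $K$ in $L^2$ and therefore everywhere on $\supp(\mu)$.

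I expect the delicate point to be justifying that the diagonal sums converge to exactly $K(x,x)$ pointwise, that is, upgrading the almost-everywhere equality coming from the trace identity to the genuine equality of continuous functions needed to invoke Dini. This is precisely where continuity of $K$ and the positivity of each residual kernel $R_N$ must be used together, and it is the step on which the uniformity of the expansion ultimately rests.
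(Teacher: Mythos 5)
The paper does not actually prove this theorem; it imports it verbatim from K\"uhn's paper (the cited Lemma~1), so your attempt can only be measured against the classical proof of Mercer's theorem, whose overall architecture you follow: compact self-adjoint positive operator, continuity of eigenfunctions via $\phi_n=\lambda_n^{-1}T_K\phi_n$, positivity of the residual kernels giving the diagonal bound, Dini's theorem, and the final Cauchy--Schwarz step. That skeleton is right, and you correctly identify the load-bearing step. But the route you propose for that step has a genuine gap, in fact two. First, the trace identity $\tr(T_K)=\int_X K(x,x)\,\mu(\diff x)$ is asserted rather than proven; in the standard literature this identity is a \emph{corollary} of Mercer's theorem, so invoking it here is circular unless you supply an independent proof (one exists, via refining partitions and normalized indicator functions, but it is not in your outline, and it is not needed for $\ell_1$-summability either --- integrating the diagonal bound already gives $\sum_{n\le N}\lambda_n\le\int_X K(x,x)\,\mu(\diff x)$). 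Second, and more seriously, even granting the trace identity, your argument only yields $g(x)=K(x,x)$ for $\mu$-almost every $x$, and the upgrade to every point of $\supp(\mu)$ cannot be done the way you hope: $g=\lim_N g_N$ is an increasing limit of continuous functions, hence only lower semicontinuous, so $K(x,x)-g(x)$ is upper semicontinuous, nonnegative, and of zero integral --- and such a function need not vanish identically on $\supp(\mu)$, because the semicontinuity runs in the wrong direction for the usual "positive on a neighborhood of positive measure" contradiction. Density of the a.e.-equality set fails for the same reason. Since Dini's theorem requires pointwise convergence to a continuous limit at \emph{every} point, the proof does not close.

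The classical repair is a fixed-$x$ orthogonality argument that you can splice in place of the trace identity. Fix $x\in\supp(\mu)$. The diagonal bound $\sum_n\lambda_n\phi_n(x)^2\le K(x,x)$ together with your Cauchy--Schwarz estimate shows that $\tilde{K}(x,s):=\sum_n\lambda_n\phi_n(x)\phi_n(s)$ converges absolutely and uniformly in $s$, hence is continuous in $s$. Now set $h_x:=K(x,\cdot)-\tilde{K}(x,\cdot)$ and check that $h_x$ is orthogonal in $L^2(X,\mu)$ to each $\phi_n$ (a direct computation, both terms contribute $\lambda_n\phi_n(x)$) and to every $g\in\ker T_K$ (here one uses that $T_Kg$ is a continuous function vanishing $\mu$-a.e., hence vanishing at the point $x\in\supp(\mu)$, while $\langle\tilde{K}(x,\cdot),g\rangle=0$ term by term). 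Since the $\phi_n$ together with $\ker T_K$ span $L^2(X,\mu)$, we get $h_x=0$ a.e., and continuity of $h_x$ forces $h_x\equiv 0$ on $\supp(\mu)$; taking $s=x$ gives exactly the pointwise diagonal convergence $g(x)=K(x,x)$ at every point of $\supp(\mu)$, after which Dini and your final Cauchy--Schwarz step go through verbatim. One further small hole to patch: the claim that operator positivity of the residual $T_K-\sum_{n\le N}\lambda_n\,\phi_n\otimes\phi_n$ forces $R_N(x,x)\ge0$ on $\supp(\mu)$ requires the standard bridge from operator positivity to pointwise positivity of a \emph{continuous} kernel (test against normalized indicators of small balls around $x$, which have positive measure precisely because $x\in\supp(\mu)$); this deserves a line rather than the word "hence."
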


Therefore, we have the following.
A Hilbert--Schmidt integral operator
\begin{equation}\label{eq: Kernel Operator}
[T_K\phi](x) = \int \limits_X K(x,s) \phi(s) \mu(\diff s)\end{equation}
associated to a symmetric positive semi-definite $L^2$-kernel $K$, is a positive-semi definite operator.
Moreover, the eigenvalues of $T_K$ can be arranged in non-increasing order, counting them according to their algebraic multiplicities:
$\lambda_1 \geq \lambda_2 \geq \ldots \geq 0$.

\chapter{MDS of Infinite Metric Measure Spaces}
\label{chap: iMDS}

Classical multidimensional scaling can be described either as a $\strain$-minimization problem, or as a linear algebra algorithm involving eigenvalues and eigenvectors.
Indeed, one of the main theoretical results for cMDS is that the linear algebra algorithm solves the corresponding $\strain$-minimization problem (see Theorem~\ref{thm: strain-minimization-cMDS}).
In this section, we describe how to generalize both of these formulations to infinite metric measure spaces.
This will allow us to discuss cMDS of the entire circle for example, without needing to restrict attention to finite subsets thereof.

\section{Proposed Approach}
Suppose we are given a bounded (possibly infinite) metric measure space $(X, d_X, \mu)$, where $d_X$ is a real-valued $L^2$-function on $X \times X$ with respect to the measure $\mu \otimes \mu$.
By \emph{bounded} we mean that $d_X$ is a bounded $L^2$-kernel, i.e.\ there exists a constant $C\in \R$ such that $d(x,s)\le C$ for all $x,s\in X$.
When it is clear from the context, the triple $(X, d_X, \mu)$ will be denoted by only
$X$.
Even if $X$ is not Euclidean, we hope that the metric $d_X$ can be approximately represented by a Euclidean metric $d_{\hat X}: \hat X \times \hat X \to \R$ on a space $\hat X\subseteq\ell^2$ or $\hat X\subseteq\R^m$, where perhaps the Euclidean space is of low dimension (often $m=2$ or $3$).

A metric space $(X, d_X)$ is said to be \emph{Euclidean} if $(X, d_X)$ can be isometrically embedded into $(\ell^2,\| \cdot \|_2)$.
That is, $(X, d_X)$ is Euclidean if there exists an isometric embedding $f\colon X\to \ell^2$, meaning $\forall x,s \in X$, we have that $d_X(x,s) = d_{\ell^2}(f(x),f(s))$.
Furthermore, we call a metric measure space $(X, d_X, \mu_X)$ \emph{Euclidean} if its underlying metric space $(X, d_X)$ is.
As will be discussed below, $X$ could also be Euclidean in the finite-dimensional sense, meaning that there is an isometric embedding $f\colon X\to \R^m$.

\section{Relation Between Distances and Inner Products} \label{ss: theoretical results} 
The following discussion is carried analogously to the arguments presented in~\cite{pekalska2001generalized} (for spaces of finitely many points). 
Consider the bounded metric measure space $(X, d, \mu)$ where $X \subseteq \ell ^2$  (or $\R^m$) and $d \colon X \times X \to \R$ the Euclidean distance.
We have the following relation between distances and inner-products in $\ell^2$ (or $\R^m$),
\[d^2(x,s) = \langle x-s, x-s \rangle = \langle x, x \rangle + \langle s, s \rangle -2 \langle x, s \rangle = \|x\|^2 + \|s\|^2 - 2  \langle x, s \rangle.\]

This implies
 \begin{equation} \label{eq: inner prod}
 \langle x, s \rangle = -\frac{1}{2} \left( d^2(x,s) - \|x\|^2 - \|s\|^2 \right).
\end{equation}

Furthermore, if we let let $\bar x = \int \limits_X w \ \mu (\diff w)$ denote the center of the space $X$, then
\begin{equation*}
\begin{split}
d^2(x,\bar x)
= & \langle x, x \rangle + \langle \bar x, \bar x \rangle -2 \langle x, \bar x \rangle  \\
= & \|x\|^2 + \int \limits_X \int \limits_X \langle w, z \rangle\ \mu (\diff z) \mu (\diff w) - 2 \int \limits_X \langle x, z \rangle\ \mu (\diff z) \\
= & \|x\|^2 + \frac{1}{2} \int \limits_X \int \limits_X  \left( \|w\|^2 + \|z\|^2 - d^2(w,z) \right) \mu (\diff z) \mu (\diff w)  \\ & - \int \limits_X \left( \|x\|^2 + \|z\|^2 - d^2(x,z) \right) \mu (\diff z)\\ 
= & \int \limits_X  d^2(x,z)\ \mu (\diff z) - \frac{1}{2} \int \limits_X \int \limits_X d^2(w,z)\ \mu (\diff z) \mu (\diff z).
\end{split}
\end{equation*}

Without loss of generality, we can assume our space $X$ is centered at the origin (i.e $\bar x = \mathbf 0$).
This implies that $\|x\|^2 = d^2(x, \mathbf 0) = d^2(x, \bar x)$.
Therefore, equation~\eqref{eq: inner prod} can be written as
\begin{equation}
\begin{split}
\langle x, s \rangle 
& = -\frac{1}{2} \left( d^2(x,s) - \int \limits_X d^2(x,z)\ \mu (\diff z )- \int \limits_X d^2(w,s)\ \mu (\diff w) + \int \limits_X \int \limits_X d^2(w,z)\ \mu (\diff w) \mu (\diff z) \right).
\end{split}
\end{equation}

Now, let $(X, d, \mu)$ be any metric measure space where $d$ is an $L^2$-function on $X \times X$ with respect to the measure $\mu \otimes \mu$.
Define $K_A(x,s) = -\frac{1}{2}d^2(x,s) $ and define $K_B$ as
\[K_B(x,s)= K_A(x,s)- \int \limits_X  K_A(x,z)\ \mu(\diff z) - \int \limits_X  K_A(w,s)\ \mu(\diff w) + \int_{X \times X} K_A(w,z)\ \mu(\diff w \times \diff z).\]

\begin{proposition}\label{prop:dist-inner-identity}
Given a metric measure space $(X,d,\mu)$, where $d \in L^2_{\mu \otimes \mu}(X \times X)$, construct $K_A$ and $K_B$ as defined above.
Then we have the relation
\[d^2(x,s) = K_B(x,x) + K_B(s,s) -2K_B(x,s).\]
\end{proposition}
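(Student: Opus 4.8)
The plan is to verify the identity by direct substitution, since $K_B$ is built from $K_A$ by exactly the continuous analogue of the double-centering operation $b_{rs} = a_{rs} - \overline{a_{r\cdot}} - \overline{a_{\cdot s}} + \overline{a_{\cdot\cdot}}$ recorded earlier in the excerpt. First I would introduce shorthand for the two correction terms. Writing
\[\alpha(x) = \int_X K_A(x,z)\,\mu(\diff z), \qquad \beta = \int_{X\times X} K_A(w,z)\,\mu(\diff w\times\diff z),\]
and using that $d$, hence $K_A$, is symmetric, the two single integrals appearing in the definition of $K_B$ coincide, so that $K_B(x,s) = K_A(x,s) - \alpha(x) - \alpha(s) + \beta$.

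Before manipulating these expressions I would check that every integral is meaningful, which is the only point requiring genuine care. Since $d\in L^2_{\mu\otimes\mu}(X\times X)$ we have $d^2\in L^1_{\mu\otimes\mu}(X\times X)$, hence $K_A\in L^1_{\mu\otimes\mu}(X\times X)$; therefore $\beta$ is finite, and by the Fubini--Tonelli theorem the slice integral $\alpha(x)$ is finite for $\mu$-almost every $x$. Consequently the identity is to be read as holding for those $x,s$ at which $\alpha$ is finite, i.e.\ for $\mu$-almost every pair $(x,s)$ (and pointwise everywhere in the bounded-kernel setting of the preceding section, where all slices are integrable).

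With integrability in hand, the remainder is bookkeeping. Evaluating
\[K_B(x,x) = K_A(x,x) - 2\alpha(x) + \beta, \qquad K_B(s,s) = K_A(s,s) - 2\alpha(s) + \beta,\]
together with the expression for $K_B(x,s)$ above, I would form the combination $K_B(x,x) + K_B(s,s) - 2K_B(x,s)$ and observe that every term involving $\alpha(x)$, $\alpha(s)$, or $\beta$ cancels, leaving precisely $K_A(x,x) + K_A(s,s) - 2K_A(x,s)$. Finally I would invoke reflexivity of the metric: $d(x,x) = d(s,s) = 0$ forces $K_A(x,x) = K_A(s,s) = 0$, while $-2K_A(x,s) = d^2(x,s)$ directly from the definition $K_A = -\tfrac{1}{2}d^2$. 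Hence the combination equals $d^2(x,s)$, as claimed. There is no real obstacle beyond the integrability check; the algebraic content is the exact cancellation of the centering corrections, mirroring the finite double-centering relation.
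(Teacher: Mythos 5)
Your proposal is correct and follows essentially the same route as the paper's proof: expand $K_B$ via its definition in terms of $K_A$, use the symmetry of $K_A$ to identify the two single-integral correction terms, cancel everything, and conclude from $K_A(x,x)=K_A(s,s)=0$ and $-2K_A(x,s)=d^2(x,s)$. Your added Fubini--Tonelli integrability check (and the resulting almost-everywhere caveat) is a point of rigor the paper glosses over, but it does not change the argument's structure.
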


\begin{proof} We have
\[
\begin{split}
&  K_B(x,x) + K_B(s,s) -2K_B(x,s) \\
& =  K_A(x,x) - \int \limits_X K_A(x,z) \mu( \diff z) - \int \limits_X K_A(w,x) \mu (\diff w)  + \int_{X \times X} K_A(w,z) \mu (\diff w) \mu (\diff z) \\ &  \ + K_A(s,s) - \int \limits_X K_A(s,z)\mu (\diff z) -\int \limits_X K_A(w,s) \mu (\diff w)  + \int_{X \times X} K_A(w,z)\mu (\diff w) \mu (\diff z) \\ & \ -2K_A(x,s) +2\int \limits_X K_A(x,z) \mu (\diff z) +2\int \limits_X K_A(w,s) \mu (\diff w) -2\int_{X \times X} K_A(w,z) \mu (\diff w) \mu (\diff z)
\\
& =  -2K_A(x,s) = d^2(x,s).
\end{split}
\]
Indeed, the intermediate steps follow since $K_A(x,x)=K_A(s,s)=0$, and since by the symmetry of $K_A$ we have $\int \limits_X K_A(x,z) \mu (\diff z ) = \int \limits_X K_A(w,x) \mu (\diff w)$ and $\int \limits_X K_A(s,z) \mu (\diff z) = \int \limits_X K_A(w,s) \mu (\diff w)$.
\end{proof}

\section{MDS on Infinite Metric Measure Spaces}\label{ss:infinite-mds}
In this section, we explain how multidimensional scaling generalizes to possibly infinite metric measure spaces that are bounded.
We remind the reader that by definition, all of the metric measure spaces we consider are equipped with probability measures.

Let $(X, d, \mu)$ be a bounded metric measure space, where $d$ is a real-valued $L^2$-function on $X \times X$ with respect to the measure $\mu \otimes \mu$.
We propose the following MDS method on infinite metric measure spaces:

\begin{enumerate}[(i)]
\item From the metric $d$, construct the kernel $K_A \colon X \times X \to \R$ defined as $K_A(x,s)= -\frac{1}{2}d^2(x,s)$.
\item Obtain the kernel $K_B \colon X \times X \to \R$ defined as 
\begin{equation}\label{eq : kernel B}
K_B(x,s)= K_A(x,s) - \int \limits_X  K_A(w,s) \mu(\diff w) - \int \limits_X  K_A(x,z) \mu(\diff z) + \int_{X \times X} K_A(w,z) \mu(\diff w \times \diff z).
\end{equation}
Assume $K_B \in L^2(X \times X)$.
Define $T_{K_B}\colon L^2(X) \to L^2(X)$ as
\[[T_{K_B}\phi](x) = \int \limits_X K_B(x,s) \phi(s) \mu(\diff s).\]

Note that, kernels $K_A$ and $K_B$ are symmetric, since $d$ is a metric.

\item Let $\lambda_1\geq \lambda_2 \geq \dots$ denote the eigenvalues of $T_{K_B}$ with corresponding eigenfunctions  $\phi_1, \phi_2, \ldots$, where the $\phi_i \in L^2(X)$ are real-valued functions. Indeed, $\{\phi_i\}_{i \in \N}$ forms an orthonormal system of $L^2(X)$.

\item Define $K_{\hat B}(x,s) = \sum\limits_{i=1} ^ \infty \hat \lambda_i \phi _i (x) \phi_i (s)$, where
\[ \hat \lambda_i =  \begin{cases} 
\lambda_i &\text{if } \lambda_i \geq 0, \\
0 &\text{if } \lambda_i < 0.\\
\end{cases}
\]
Define $T_{K_{\hat B}}\colon L^2(X) \to L^2(X)$ to be the Hilbert--Schmidt integral operator associated to the kernel $K_{\hat B}$.
Note that the eigenfunctions $\phi _i$ for $T_{K_B}$ (with eigenvalues $\lambda_i$) are also the eigenfunctions for $T_{K_{\hat B}}$ (with eigenvalues $\hat \lambda_i$).
By Mercer's Theorem (Theorem~\ref{Thm: Mercer's Theorem}), $K_{\hat B}$ converges uniformly. 

\item Define the MDS embedding of $X$ into $\ell^2$ via the map $f\colon X\to \ell^2$ given by 
\[f(x)=\left(\sqrt{\hat \lambda_1} \phi_1(x),\sqrt{\hat \lambda_2} \phi_2(x),\sqrt{\hat \lambda_3} \phi_3(x),\ldots\right)\]
for all $x\in X$.
We denote the MDS embedding $f(X)$ by $\hat X$.

Similarly, define the MDS embedding of $X$ into $\R^m$ via the map $f_m\colon X\to \R^m$ given by
\[f_m(x)=\left(\sqrt{\hat \lambda_1} \phi_1(x), \sqrt{\hat \lambda_2} \phi_2(x), \ldots, \sqrt{\hat \lambda_m} \phi_m(x)\right)\]
for all $x\in X$.
We denote the image of the MDS embedding by $\hat X_m=f_m(X)$.

\item Define the measure $\hat \mu $ on $(\hat X, \hat d)$ to be the push-forward measure of $\mu$ with respect to map $f$, where $\cB(\hat X)$ is the Borel $\sigma$-algebra of $\hat X$ with respect to topology induced by $\hat d$, the Euclidean metric in $\R ^m$ or the $\ell^2$ norm.
Indeed, the function $f$ is measurable by Corollary~\ref{measurable} below.
\end{enumerate}

\begin{proposition}
The MDS embedding map $f\colon X\to \ell^2$ defined by
\[f(x)=\left(\sqrt{\hat \lambda_1} \phi_1(x),\sqrt{\hat \lambda_2} \phi_2(x),\sqrt{\hat \lambda_3} \phi_3(x),\ldots\right)\]
is a continuous map.
\end{proposition}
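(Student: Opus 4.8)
The plan is to reduce the continuity of $f$ to the continuity of the reconstructed kernel $K_{\hat B}$ on (the relevant domain of) $X \times X$, which is exactly what Mercer's Theorem supplies. First I would verify that $f$ actually lands in $\ell^2$. Since every $\hat\lambda_i \geq 0$ by construction, the kernel $K_{\hat B}(x,s) = \sum_i \hat\lambda_i \phi_i(x)\phi_i(s)$ is symmetric and positive semi-definite, so Mercer's Theorem (Theorem~\ref{Thm: Mercer's Theorem}) applies and yields $\|f(x)\|_2^2 = \sum_i \hat\lambda_i\, \phi_i(x)^2 = K_{\hat B}(x,x) < \infty$ for every $x$. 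Hence $f(x) \in \ell^2$.

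The central computation is the identity
\[\|f(x)-f(y)\|_2^2 = K_{\hat B}(x,x) - 2K_{\hat B}(x,y) + K_{\hat B}(y,y),\]
which I would obtain by expanding $\|f(x)-f(y)\|_2^2 = \sum_i \hat\lambda_i\big(\phi_i(x)-\phi_i(y)\big)^2$ and regrouping the three resulting series as values of $K_{\hat B}$; each series converges by the previous paragraph (the cross term being $K_{\hat B}(x,y)$), so the regrouping is legitimate. Next I would establish that $K_{\hat B}$ is continuous on $X \times X$: by Mercer's Theorem the series $\sum_i \hat\lambda_i \phi_i(x)\phi_i(s)$ converges \emph{uniformly} and each $\phi_i$ is continuous, so every partial sum $\sum_{i=1}^N \hat\lambda_i \phi_i(x)\phi_i(s)$ is continuous on $X \times X$ and, being a uniform limit of continuous functions, so is $K_{\hat B}$.

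Combining these, as $y \to x$ the right-hand side of the identity tends to $K_{\hat B}(x,x) - 2K_{\hat B}(x,x) + K_{\hat B}(x,x) = 0$ by continuity of $K_{\hat B}$, so $\|f(x)-f(y)\|_2 \to 0$, proving $f$ continuous. The main obstacle is not any single estimate but rather pinning down the domain on which the appeal to Mercer is valid: Theorem~\ref{Thm: Mercer's Theorem} furnishes uniform convergence and continuous eigenfunctions only on $\supp(\mu)$, and outside $\supp(\mu)$ the $\phi_i$ (as $L^2$ representatives) need not be well-defined pointwise. Thus the clean statement is continuity on $\supp(\mu)$, and the care required is to confirm that $K_{\hat B}$ is a genuinely continuous positive semi-definite kernel there (symmetry and positivity follow from $\hat\lambda_i \geq 0$, with the needed continuity inherited from that of $K_B$) so that the uniform convergence underpinning both $f(x)\in\ell^2$ and the continuity of $K_{\hat B}$ is available.
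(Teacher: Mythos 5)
Your proof is correct, but it follows a genuinely different route from the paper's. The paper introduces the truncated maps $g_m(x)=\bigl(\sqrt{\hat\lambda_1}\phi_1(x),\ldots,\sqrt{\hat\lambda_m}\phi_m(x),0,0,\ldots\bigr)$, each continuous because the finitely many $\phi_i$ are, and observes that $\|g_m(x)-f(x)\|_2^2=\sum_{i=m+1}^\infty \hat\lambda_i\phi_i^2(x)$ is exactly a tail of the Mercer expansion on the diagonal; uniform convergence of that expansion makes $g_m\to f$ uniform, and $f$ is then continuous as a uniform limit of continuous maps into $\ell^2$. You instead prove the polarization-type identity $\|f(x)-f(y)\|_2^2=K_{\hat B}(x,x)-2K_{\hat B}(x,y)+K_{\hat B}(y,y)$ and deduce continuity of $f$ from the two-variable continuity of $K_{\hat B}$, itself obtained as a uniform limit of continuous partial sums. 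Both arguments rest on the same Mercer ingredients (continuity of the $\phi_i$, uniform convergence of the expansion), but they are organized around different key steps: the paper needs uniform convergence only on the diagonal, whereas you need continuity of $K_{\hat B}$ off the diagonal as well, which Mercer does supply. What your route buys is the explicit identity tying the geometry of the embedding to the kernel --- essentially the computation the paper deploys later in Theorem~\ref{thm:inf-mds-euc} --- and it surfaces a real subtlety that the paper's proof passes over silently: Mercer guarantees continuity of the $\phi_i$ and uniform convergence only on $\supp(\mu)$, so strictly speaking both proofs establish continuity of $f$ on $\supp(\mu)$ rather than on all of $X$ (these coincide exactly when $\mu$ has full support). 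What the paper's route buys is the uniform convergence of the finite-dimensional approximations $g_m\to f$, a statement of independent value for relating the $\ell^2$ embedding $f$ to the finite-dimensional embeddings $f_m$ into $\R^m$.
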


\begin{proof}
Define the sequence of embeddings $g_m \colon X \to \ell^2$, for $m \in \N$, by 
\[ g_m(x)=\left(\sqrt{\hat \lambda_1}\phi_1(x),\ldots, \sqrt{\hat \lambda_m}\phi_m(x), 0, 0,\ldots\right).\]
By Mercer's Theorem (Theorem~\ref{Thm: Mercer's Theorem}), we have that the eigenfunctions $\phi_i \colon X \to \R$ are continuous for all $i \in \N$.
It follows that $g_m$ is a continuous map for any $m< \infty$.

Consider the sequence of partial sums 
\[K_m(x,x) = \sum\limits _{i=1}^m \hat \lambda_i \phi_i ^2(x).\]
By Mercer's Theorem, $K_m(x,x)$ converges uniformly to $K(x,x)  = \sum\limits _{i=1}^ \infty \hat \lambda_i \phi_i ^2(x)$, as  $m \to \infty$.
Therefore, for any $\epsilon > 0$, there exists some $N(\epsilon) \in \N$ such that for all $m \ge N ( \epsilon)$,
\[ \| g_m(x) - f(x)\|_{2}^2 = \sum\limits _ {i = m+1} ^ \infty \hat \lambda_i \phi_i ^2 (x) = |K_m(x,x) - K(x,x) | < \epsilon \mbox{ for all } x \in X.\]
Therefore, $g_m$ converges uniformly to $f$ as $m \to \infty$.
Since the uniform limit of continuous functions is continuous, it follows that $f\colon X\to \ell^2$ is a continuous map.
\end{proof}

With respect to Borel measures every continuous map is measurable, and therefore we immediately obtain the following corollary.

\begin{corollary}\label{measurable}
The MDS embedding map $f\colon X\to \ell^2$ is measurable. 
\end{corollary}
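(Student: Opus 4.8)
The plan is to obtain the corollary immediately from the preceding Proposition together with a standard fact from measure theory. Having just shown that $f\colon X\to\ell^2$ is continuous, I would invoke the general principle that any continuous map between topological spaces is Borel measurable once both the domain and codomain carry their Borel $\sigma$-algebras. Concretely, equip $X$ with the Borel $\sigma$-algebra $\cB(X)$ generated by the metric topology, and equip $\ell^2$ with the Borel $\sigma$-algebra generated by the norm topology of $\|\cdot\|_2$.

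The key step is to reduce measurability to a generating family. Since the open subsets of $\ell^2$ generate $\cB(\ell^2)$, the map $f$ is measurable provided $f^{-1}(U)\in\cB(X)$ for every open $U\subseteq\ell^2$. Continuity of $f$ supplies exactly this, because the preimage of an open set under a continuous map is open, hence Borel. A routine argument then upgrades the conclusion from open sets to the whole $\sigma$-algebra: the collection $\{A\subseteq\ell^2 : f^{-1}(A)\in\cB(X)\}$ is itself a $\sigma$-algebra (preimages commute with complements and countable unions), and since it contains every open set it must contain $\cB(\ell^2)$, giving measurability of $f$.

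There is no genuine obstacle here; the corollary is immediate from the continuity established above, which is why it is stated as a corollary rather than an independent result. The only point requiring care is bookkeeping about which $\sigma$-algebras are in play — namely confirming that the measure $\hat\mu$ in step~(vi) of the construction is defined with respect to the Borel $\sigma$-algebra of the norm (or Euclidean) topology, so that the measurability just established is precisely the hypothesis needed for the push-forward $f_\#\mu$ to be well defined.
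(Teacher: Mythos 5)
Your proposal is correct and follows exactly the paper's route: the paper derives this corollary in one line from the preceding continuity proposition, invoking the standard fact that with respect to Borel $\sigma$-algebras every continuous map is measurable. Your additional spelling-out of the generating-family argument (open sets generate $\cB(\ell^2)$, and the preimage collection is a $\sigma$-algebra) is just the standard proof of that fact, not a different approach.
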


\begin{theorem}\label{thm:inf-mds-euc}
A metric measure space $(X, d_X, \mu)$ is Euclidean if and only if $T_{K_B}$ is a positive semi-definite operator on $L^2(X, \mu).$
\end{theorem}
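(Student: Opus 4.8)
The plan is to route both implications through a single identity: whenever $X$ admits an isometric embedding $g\colon X\to\ell^2$, the double-centered kernel $K_B$ equals an honest $\ell^2$ inner product of centered image points. This turns positive semi-definiteness of $T_{K_B}$ into a manifestly nonnegative squared norm in one direction, and lets Mercer's Theorem manufacture the embedding in the other. The engine in both directions is the distance--inner-product relation already recorded in Proposition~\ref{prop:dist-inner-identity}.

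For the forward direction, suppose $(X,d_X,\mu)$ is Euclidean, witnessed by an isometric embedding $g\colon X\to\ell^2$. Since $d_X$ is bounded and $\|g(x)-g(s)\|_2=d_X(x,s)$, the image $g(X)$ has bounded diameter; as $g$ is $1$-Lipschitz hence continuous into the separable space $\ell^2$, the Bochner integral $\bar y=\int_X g\,\mu(\diff x)\in\ell^2$ exists. I would then repeat the computation of Section~\ref{ss: theoretical results} verbatim with $y=g(x)$, $t=g(s)$: expanding $K_A=-\tfrac12 d_X^2=\langle y,t\rangle-\tfrac12\|y\|^2-\tfrac12\|t\|^2$ and substituting into the definition of $K_B$, all the squared-norm and averaged-norm terms cancel, leaving
\[K_B(x,s)=\langle g(x)-\bar y,\ g(s)-\bar y\rangle.\]
Writing $h(x)=g(x)-\bar y$ and using Fubini together with continuity of the inner product, for any $\phi\in L^2(X)$ I would compute
\[\langle T_{K_B}\phi,\phi\rangle=\int_X\int_X\langle h(x),h(s)\rangle\,\phi(x)\phi(s)\,\mu(\diff x)\mu(\diff s)=\left\|\int_X h(x)\,\phi(x)\,\mu(\diff x)\right\|_2^2\geq 0,\]
which is exactly positive semi-definiteness of $T_{K_B}$ in the sense of Definition~\ref{p.s.d.o}.

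For the reverse direction, suppose $T_{K_B}$ is positive semi-definite. Then every eigenvalue satisfies $\lambda_i\geq 0$, so $\hat\lambda_i=\lambda_i$ and $K_{\hat B}=K_B$. The kernel $K_B$ is continuous, being built from the continuous kernel $K_A=-\tfrac12 d_X^2$ by subtracting its marginal and total averages, and it is a positive semi-definite kernel; hence Mercer's Theorem (Theorem~\ref{Thm: Mercer's Theorem}) yields the uniformly convergent expansion $K_B(x,s)=\sum_{i=1}^\infty\lambda_i\phi_i(x)\phi_i(s)$. I would then take the MDS map $f(x)=\bigl(\sqrt{\lambda_1}\phi_1(x),\sqrt{\lambda_2}\phi_2(x),\ldots\bigr)$, which lands in $\ell^2$ because $\|f(x)\|_2^2=\sum_i\lambda_i\phi_i^2(x)=K_B(x,x)<\infty$. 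Expanding and applying the Mercer expansion termwise to each of the three resulting sums,
\[\|f(x)-f(s)\|_2^2=\sum_i\lambda_i\bigl(\phi_i(x)-\phi_i(s)\bigr)^2=K_B(x,x)+K_B(s,s)-2K_B(x,s),\]
and Proposition~\ref{prop:dist-inner-identity} identifies the right-hand side with $d_X^2(x,s)$. Thus $f$ is an isometric embedding into $\ell^2$, so $X$ is Euclidean.

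The main obstacle I anticipate is the analytic bookkeeping in the two limiting steps rather than the algebra. In the forward direction I must justify that $\bar y$ exists as a genuine element of $\ell^2$ and that the scalar double integral may be pulled through the inner product to produce the $\ell^2$-valued integral $\int_X h\,\phi\,\mu(\diff x)$; this is a Bochner-integrability and Fubini argument resting on boundedness of $g(X)$ and $\phi\in L^2(X)$. In the reverse direction the crux is confirming that the hypotheses of Mercer's Theorem truly hold---continuity and positive semi-definiteness of $K_B$ \emph{as a kernel}, the latter being the translation of positive semi-definiteness of the operator $T_{K_B}$---and that the uniform convergence it supplies legitimizes both the termwise evaluation of $\|f(x)-f(s)\|_2^2$ and the membership $f(x)\in\ell^2$.
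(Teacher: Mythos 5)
Your proof is correct, and the reverse direction is essentially the paper's own argument: positive semi-definiteness of $T_{K_B}$ plus Mercer's Theorem gives the uniformly convergent expansion $K_B(x,s)=\sum_i\lambda_i\phi_i(x)\phi_i(s)$, and Proposition~\ref{prop:dist-inner-identity} then converts $\|f(x)-f(s)\|_2^2=K_B(x,x)+K_B(s,s)-2K_B(x,s)$ into $d_X^2(x,s)$. Where you genuinely diverge is the forward direction. The paper discretizes: for any finite subset $\{x_i\}\subseteq X$ it invokes the finite-sample criterion (Theorem~\ref{MDS p.s.d}) to conclude the matrix $\bigl(K_B(x_i,x_j)\bigr)$ is positive semi-definite, and then passes to positive semi-definiteness of the operator via Mercer's Theorem. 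You instead prove the identity $K_B(x,s)=\langle g(x)-\bar y,\,g(s)-\bar y\rangle$ directly from the isometric embedding $g$, so that $\langle T_{K_B}\phi,\phi\rangle=\bigl\|\int_X (g-\bar y)\phi\,\diff\mu\bigr\|_2^2\ge 0$ is a manifest squared norm. Your route costs some Bochner-integration bookkeeping (existence of $\bar y$, pulling the inner product through the integral), but it buys something real: it sidesteps a soft spot in the paper's argument, namely that Theorem~\ref{MDS p.s.d} concerns the matrix double-centered by the \emph{empirical} (uniform) measure on the $n$ sample points, whereas $\bigl(K_B(x_i,x_j)\bigr)$ is centered with respect to $\mu$, so the two matrices do not coincide and an extra reconciliation step is silently needed there; your Gram-kernel identity makes no such appeal. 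It also makes transparent exactly which notion of positive semi-definiteness (the operator one, Definition~\ref{p.s.d.o}) is being verified, whereas the paper's use of Mercer's Theorem to pass from finitely-supported positive semi-definiteness to the operator statement rests on the equivalence of kernel-level and operator-level positivity, which the paper only asserts in a restricted setting. Both proofs share the same residual caveat in the reverse direction: Mercer's expansion is guaranteed only on $\supp(\mu)$, so strictly speaking the embedding $f$ is isometric on $\supp(\mu)$ rather than on all of $X$.
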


\begin{proof} Suppose $(X, d_X, \mu)$ is a Euclidean metric measure space.
For any discretization $X_n$ of $X$ and for any $x_i , x_j \in X_n$, the matrix $\bB _{ij} = K_B(x_i, x_j)$ is a positive semi-definite matrix by Theorem~\ref{MDS p.s.d}.
Furthermore, by Mercer's Theorem (Theorem~\ref{Thm: Mercer's Theorem}) we have that $T_{K_B}$ is a positive semi-definite operator on $L^2(X, \mu)$.

Now, suppose that $T_{K_B}$ is a positive semi-definite operator on $L^2(X, \mu)$.
By Mercer's Theorem (Theorem~\ref{Thm: Mercer's Theorem}), $K_B$ is a positive semi-definite kernel, and furthermore $K_B (x, s) = \sum\limits _{i=1} ^\infty \lambda_i\phi_i(x)\phi_i(s)$ converges uniformly in $L^2_{\mu \otimes \mu }(X \times X )$.
Thus, by Proposition~\ref{prop:dist-inner-identity} we have
\[d_X^2(x,s) = K_B(x,x) + K_B(s,s) -2K_B(x,s) = \sum\limits _{i=1} ^\infty \lambda_i(\phi_i(x) - \phi_i(s))^2= \| f(x) - f(s) \|_2^2.\]
Therefore, $(X, d_X, \mu)$ is a Euclidean measure metric space.
\end{proof}

\section{Strain Minimization}~\label{sec: strain minimization} 
We begin this section by defining a loss function, which in particular is a $\strain$ function. We then show that the MDS method for metric measure spaces described in Section~\ref{ss:infinite-mds} minimizes this loss function.

Define the \emph{\strain} function of $f$ as follows,
\[\strain(f) =\|T_{K_B}-T_{K_{\hat{B}}}\|_{HS}^{2}=\tr ((T_{K_B}-T_{K_{\hat{B}}})^2) = \int \int \left( K_B(x,t) - K_{\hat B}(x,t) \right)^2 \mu(dt) \mu(\diff x),\]
which is well-defined (see Remark~\ref{rem: relation to Strain}).

In order to show that MDS for metric measure spaces minimizes the \emph{\strain}, we will need the following two lemmas.

\begin{lemma}\label{lem: closest-nonneg-l2}
Let $\lambda\in\ell^2$, let $S=\{\hat{\lambda}\in \ell^2~|~\hat{\lambda}_i\ge0\text{ for all }i\}$, and define $\bar{\lambda}\in S$ by
\[ \bar \lambda_i =
\begin{cases} 
\lambda_i & \text{if }\lambda_i \geq 0 \\
0 & \text{if }\lambda_i < 0.
\end{cases}\]
Then $\| \lambda - \bar \lambda\|_2 \leq \| \lambda - \hat \lambda\|_2$ for all $\hat{\lambda}\in S$.
\end{lemma}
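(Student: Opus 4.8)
The plan is to reduce the $\ell^2$ minimization to a coordinatewise one. Since the squared norm splits as $\|\lambda-\hat\lambda\|_2^2=\sum_i(\lambda_i-\hat\lambda_i)^2$ into a sum of nonnegative terms, and the constraint defining $S$ decouples across coordinates (each $\hat\lambda_i\ge 0$ may be chosen independently of the others), it suffices to prove the stronger pointwise inequality
\[(\lambda_i-\bar\lambda_i)^2\le(\lambda_i-\hat\lambda_i)^2\qquad\text{for every index }i\text{ and every }\hat\lambda_i\ge0,\]
and then sum over $i$. Because both series have nonnegative terms, summing the termwise inequalities is legitimate and passes to the limit with no convergence subtleties, yielding $\|\lambda-\bar\lambda\|_2^2\le\|\lambda-\hat\lambda\|_2^2$.

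For the pointwise claim I would split on the sign of $\lambda_i$, matching the definition of $\bar\lambda$. If $\lambda_i\ge0$, then $\bar\lambda_i=\lambda_i$, so the left-hand side is $0$ and the inequality is immediate. If $\lambda_i<0$, then $\bar\lambda_i=0$, so the left-hand side equals $\lambda_i^2$; since $\hat\lambda_i\ge0>\lambda_i$, the quantity $\lambda_i-\hat\lambda_i$ is at least as negative as $\lambda_i$, giving $|\lambda_i-\hat\lambda_i|=\hat\lambda_i-\lambda_i\ge-\lambda_i=|\lambda_i|$ and hence $(\lambda_i-\hat\lambda_i)^2\ge\lambda_i^2=(\lambda_i-\bar\lambda_i)^2$.

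Before running this I would also record that $\bar\lambda$ is a legitimate competitor, i.e.\ that $\bar\lambda\in S\subseteq\ell^2$: this holds because $0\le\bar\lambda_i\le|\lambda_i|$ forces $\sum_i\bar\lambda_i^2\le\sum_i\lambda_i^2<\infty$. Conceptually, $\bar\lambda$ is simply the metric projection of $\lambda$ onto the closed convex cone $S$ in the Hilbert space $\ell^2$, and the coordinatewise computation is just the explicit verification that this projection is the positive part of $\lambda$. I do not expect any genuine obstacle here; the only point requiring a modicum of care is the termwise summation of the inequalities, which is unproblematic precisely because every term involved is nonnegative.
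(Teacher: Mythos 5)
Your proof is correct. The paper actually omits the proof of this lemma entirely (stating only that it is ``straightforward''), and your coordinatewise argument --- minimizing $(\lambda_i-\hat\lambda_i)^2$ over $\hat\lambda_i\ge 0$ separately in each coordinate and summing the nonnegative termwise inequalities, together with the check that $\bar\lambda\in\ell^2$ --- is precisely the straightforward verification the authors had in mind, so nothing further is needed.
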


\begin{lemma}\label{lem: closest-nonneg-Rm}
Let $\lambda\in\ell^2$ have sorted entries $\lambda_1 \geq \lambda_2 \geq \ldots$, let $m\ge 0$, let \[S_m=\{\hat{\lambda}\in \ell^2~|~\hat{\lambda}_i\ge0\text{ for all }i\text{ with at most }m\text{ entries positive}\},\] and define $\bar{\lambda}\in S_m$ by
\[ \bar \lambda_i =
\begin{cases} 
\lambda_i & \text{if }\lambda_i \geq 0\text{ and }i\le m \\
0 & \text{if }\lambda_i < 0\text{ or }i>m.
\end{cases}\]
Then $\| \lambda - \bar \lambda\|_2 \leq \| \lambda - \hat \lambda\|_2$ for all $\hat{\lambda}\in S_m$.
\end{lemma}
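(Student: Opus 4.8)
The plan is to reduce to squared distances, optimize coordinatewise once a support is fixed, and then show that the optimal support is exactly the one realized by $\bar\lambda$. Since minimizing $\|\lambda-\hat\lambda\|_2$ over $S_m$ is the same as minimizing $\|\lambda-\hat\lambda\|_2^2=\sum_i(\lambda_i-\hat\lambda_i)^2$, I would work throughout with the squared norm. Note first that $\bar\lambda\in S_m$: it is nonnegative, its positive entries occur only among indices $i\le m$, and it lies in $\ell^2$ as a finitely-supported truncation of $\lambda$.

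First I would fix an arbitrary $\hat\lambda\in S_m$ and let $T=\{i:\hat\lambda_i>0\}$ denote its support, so that $|T|\le m$ and $\hat\lambda_i=0$ for $i\notin T$. For each $i\in T$ the quantity $(\lambda_i-\hat\lambda_i)^2$, minimized over $\hat\lambda_i\ge0$, equals $0$ when $\lambda_i\ge0$ and $\lambda_i^2$ when $\lambda_i<0$ (this is the single-coordinate version of the nonnegative-orthant projection in Lemma~\ref{lem: closest-nonneg-l2}). Splitting the sum over $T$ and its complement therefore gives
\[\|\lambda-\hat\lambda\|_2^2\ \ge\ \sum_{i\notin T}\lambda_i^2+\sum_{\substack{i\in T\\ \lambda_i<0}}\lambda_i^2\ =\ \|\lambda\|_2^2-\sum_{\substack{i\in T\\ \lambda_i\ge0}}\lambda_i^2.\]
The identical computation applied to $\bar\lambda$, whose support is $P=\{i\le m:\lambda_i\ge0\}$ and which already attains value $\lambda_i$ on $P$, yields the matching equality $\|\lambda-\bar\lambda\|_2^2=\|\lambda\|_2^2-\sum_{i\in P}\lambda_i^2$. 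Hence the lemma reduces to the single inequality $\sum_{i\in T,\ \lambda_i\ge0}\lambda_i^2\le\sum_{i\in P}\lambda_i^2$.

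This last inequality is where the sortedness hypothesis does the work. Because $\lambda_1\ge\lambda_2\ge\cdots$, the nonnegative entries are exactly $\lambda_1,\ldots,\lambda_k$ with $k=\#\{i:\lambda_i\ge0\}$ (possibly infinite), and their squares are themselves nonincreasing; thus $P$ picks out the indices of the $\min(m,k)$ largest values of $\lambda_i^2$ among the nonnegative entries. Since $T\cap\{i:\lambda_i\ge0\}$ contains at most $m$ indices, the sum of $\lambda_i^2$ over it cannot exceed the sum of the $m$ largest such squares, namely $\sum_{i\in P}\lambda_i^2$. Combining the three displays gives $\|\lambda-\bar\lambda\|_2^2\le\|\lambda-\hat\lambda\|_2^2$, as desired.

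I expect the support-selection step to be the main obstacle: unlike Lemma~\ref{lem: closest-nonneg-l2}, where the minimizer is obtained independently in each coordinate, here the cardinality constraint $|T|\le m$ couples the coordinates, so pointwise optimization alone does not suffice. The crux is the (standard rearrangement) observation that maximizing $\sum_{i\in T,\ \lambda_i\ge0}\lambda_i^2$ over supports of size at most $m$ is a greedy ``keep the largest squares'' problem whose solution, by the sortedness of $\lambda$, is precisely the truncation $\bar\lambda$. Care is needed only to treat the cases $k<m$ and $k=\infty$ uniformly, but $\lambda\in\ell^2$ guarantees that every sum above converges.
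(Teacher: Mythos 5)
Your proof is correct. Note that the paper itself offers no proof to compare against---it declares both Lemma~\ref{lem: closest-nonneg-l2} and Lemma~\ref{lem: closest-nonneg-Rm} ``straightforward'' and omits them---so your argument simply supplies the missing details, and it does so cleanly: the reduction to $\|\lambda-\hat\lambda\|_2^2 \ge \|\lambda\|_2^2 - \sum_{i\in T,\,\lambda_i\ge 0}\lambda_i^2$ with equality for $\bar\lambda$, followed by the rearrangement observation that sortedness makes $P=\{i\le m:\lambda_i\ge 0\}$ the optimal support, is exactly the right way to handle the cardinality constraint that prevents a purely coordinatewise argument.
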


The proofs of these lemmas are straightforward and hence omitted.

The following theorem generalizes~\cite[Theorem~14.4.2]{bibby1979multivariate}, or equivalently~\cite[Theorem~2]{trosset1997computing}, to the infinite case.
Our proof is organized analogously to the argument in~\cite[Theorem~2]{trosset1997computing}.

\begin{theorem}\label{Thm: infinite-mds-optimization}
Let $(X, d, \mu)$ be a bounded (and possibly non-Euclidean) metric measure  space.
Then $\strain(f)$ is minimized over all maps $f\colon X\to \ell^2$ or $f\colon X\to \R^m$ when $f$ is the MDS embedding given in Section~\ref{ss:infinite-mds}.
\end{theorem}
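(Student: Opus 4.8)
The plan is to recast the theorem as an operator best-approximation problem and then reduce it to the scalar eigenvalue estimates of Lemmas~\ref{lem: closest-nonneg-l2} and~\ref{lem: closest-nonneg-Rm}. First I would pin down the meaning of $\strain(f)$ for an arbitrary competitor. To a configuration $f\colon X\to\ell^2$ (or $\R^m$) I associate the centered inner-product kernel $K_{\hat B}(x,s)=\langle f(x)-\bar f,\ f(s)-\bar f\rangle$, where $\bar f=\int_X f\,\mu(\diff x)$, and set $\strain(f)=\|T_{K_B}-T_{K_{\hat B}}\|_{HS}^2$. The purpose of this is that $T_{K_{\hat B}}$ is then automatically a positive semi-definite Hilbert--Schmidt operator, since for $g\in L^2(X)$ one has $\langle T_{K_{\hat B}}g,g\rangle=\big\|\int_X (f(x)-\bar f)\,g(x)\,\mu(\diff x)\big\|^2\ge 0$, and if $f$ takes values in $\R^m$ then $T_{K_{\hat B}}$ has rank at most $m$. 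The MDS embedding of Section~\ref{ss:infinite-mds} realizes exactly $K_{\hat B}=\sum_i \hat\lambda_i\,\phi_i(x)\phi_i(s)$. So the theorem is equivalent to the claim that $\sum_i \hat\lambda_i\langle\phi_i,\cdot\rangle\phi_i$ (truncated to its first $m$ terms in the $\R^m$ case) is the closest positive semi-definite operator (of rank $\le m$) to $T_{K_B}$ in Hilbert--Schmidt norm.

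Second, I would expand the distance in the orthonormal eigenbasis $\{\phi_i\}$ of $T_{K_B}$ furnished by the spectral theorem (Theorem~\ref{thm:spectral}). Writing $t_{ij}=\langle\phi_i,T_{K_{\hat B}}\phi_j\rangle$ and using $\tr(T_{K_B}^2)=\sum_i\lambda_i^2$, $\tr(T_{K_B}T_{K_{\hat B}})=\sum_i\lambda_i t_{ii}$, and $\tr(T_{K_{\hat B}}^2)=\sum_{i,j}t_{ij}^2$, I obtain
\[\strain(f)=\sum_i(\lambda_i-t_{ii})^2+\sum_{i\neq j}t_{ij}^2,\qquad t_{ii}=\langle\phi_i,T_{K_{\hat B}}\phi_i\rangle\ge 0,\]
the nonnegativity coming from positive semi-definiteness. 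In the unconstrained $\ell^2$ case this already suffices: the off-diagonal sum is nonnegative, so $\strain(f)\ge\|\lambda-t\|_2^2$ with $t=(t_{ii})$ a nonnegative $\ell^2$-sequence, and Lemma~\ref{lem: closest-nonneg-l2} bounds this below by $\|\lambda-\bar\lambda\|_2^2$ with $\bar\lambda_i=\hat\lambda_i$. The MDS embedding makes $T_{K_{\hat B}}$ diagonal in $\{\phi_i\}$ with diagonal exactly $\hat\lambda_i$, hence attains this lower bound.

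Third comes the $\R^m$ case, where the real work lies, since merely discarding the off-diagonal terms is too weak: the diagonal of a rank-$\le m$ positive operator may have more than $m$ positive entries, so the argument above only recovers the larger $\ell^2$-feasible set and the weaker $\ell^2$ optimum. Instead I would diagonalize $T_{K_{\hat B}}=\sum_{k=1}^m\mu_k\langle\psi_k,\cdot\rangle\psi_k$ with $\mu_k\ge 0$ and $\{\psi_k\}$ orthonormal, and note that minimizing over $\mu_k\ge 0$ for fixed $\{\psi_k\}$ forces $\mu_k=\max(c_k,0)$ with $c_k=\langle\psi_k,T_{K_B}\psi_k\rangle$, reducing the problem to proving
\[\sum_{k=1}^m (\max(c_k,0))^2\ \le\ \sum_{k=1}^m \hat\lambda_k^2.\]
This is an infinite-dimensional Ky~Fan/Eckart--Young statement: Ky~Fan's maximum principle gives the weak majorization $\sum_{k=1}^j c_{(k)}\le\sum_{k=1}^j\lambda_k$ for all $j\le m$ (with $c_{(k)}$ the decreasing rearrangement), and since $x\mapsto(\max(x,0))^2$ is convex and nondecreasing, Karamata's inequality yields the displayed bound. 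This identifies an optimal configuration with $\psi_k=\phi_k$ and $\mu_k=\hat\lambda_k$, so the minimal strain equals $\|\lambda-\bar\lambda\|_2^2$ for the truncated sequence $\bar\lambda$ of Lemma~\ref{lem: closest-nonneg-Rm}, attained by $f_m$.

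I expect the genuine obstacle to be precisely this rank-constrained step: justifying in the infinite-dimensional setting that the best positive semi-definite rank-$\le m$ approximant shares the eigenbasis of $T_{K_B}$ and keeps its top $m$ positive eigenvalues. Making the Ky~Fan majorization and the diagonalization rigorous requires controlling convergence of the eigenvalue sequence and verifying that every trace manipulated above is absolutely convergent; here I would lean on the standing assumption $K_B\in L^2_{\mu\otimes\mu}(X\times X)$ (so that $T_{K_B}$ is Hilbert--Schmidt and $\sum_i\lambda_i^2<\infty$) together with the spectral theorem and Mercer's theorem (Theorems~\ref{thm:spectral} and~\ref{Thm: Mercer's Theorem}) to legitimize the spectral expansions and the uniform convergence of $K_{\hat B}$.
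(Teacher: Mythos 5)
Your proposal is correct, and it takes a genuinely different route from the paper's proof. The paper optimizes in the opposite order: holding the competitor's eigenvalues $\hat\lambda_1 \ge \hat\lambda_2 \ge \cdots \ge 0$ fixed, it factors $T_{\hat B} = M_{\hat B}\circ S_{\hat B}\circ M'_{\hat B}$, writes $\strain(f) = \tr(S_B^2) - 2\tr(S_B T_G S_{\hat B} T'_G) + \tr(S_{\hat B}^2)$ with $T_G = M'_B \circ M_{\hat B}$ orthogonal, and maximizes $\tr(S_B T_G S_{\hat B} T'_G) = \sum_i h_i \lambda_i$, where $h_i = \sum_j \hat\lambda_j \langle \hat\phi_j, \phi_i\rangle^2$, by a greedy alignment argument concluding that $\hat\phi_i = \phi_i$ is optimal; only then does it choose the eigenvalues via Lemmas~\ref{lem: closest-nonneg-l2} and~\ref{lem: closest-nonneg-Rm}. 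You reverse that decomposition. In the $\ell^2$ case you avoid basis optimization altogether: expanding the Hilbert--Schmidt norm in the eigenbasis of $T_{K_B}$, discarding the off-diagonal squares, and applying Lemma~\ref{lem: closest-nonneg-l2} to the nonnegative $\ell^2$ diagonal sequence $(t_{ii})$ gives the lower bound at once, which is shorter and more elementary than the paper's argument. In the $\R^m$ case you fix the competitor's eigenbasis, solve for the optimal eigenvalues in closed form ($\mu_k = \max(c_k,0)$), and then bound the Rayleigh quotients via Ky Fan's maximum principle together with the weak-majorization (Hardy--Littlewood--P\'olya/Karamata) inequality for the convex nondecreasing function $x\mapsto(\max(x,0))^2$. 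Your remark that the naive diagonal argument fails under the rank constraint --- the diagonal of a rank-$\le m$ positive semi-definite operator can have more than $m$ positive entries --- identifies precisely the point where the paper's greedy alignment does its real work; that step (``choose $h_1$ as large as possible, then $h_2$, \ldots'') is stated informally in the paper and implicitly rests on a Schur--Horn-type majorization fact, which your Ky Fan/Karamata route makes explicit and rigorous, at the price of invoking classical operator-theoretic results the paper does not develop (much as the paper leans on Proposition~\ref{prop:trace-conj} and the omitted proofs of its two lemmas). Both arguments reach the same minimizer and the same minimal strain value $\|\lambda - \bar\lambda\|_2^2$.
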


\begin{proof}
Let $K_B \colon X \times X \to \R$ be defined as in equation~\eqref{eq : kernel B}.
For  simplicity of notation, let $T_{B} \colon L^2(X, \mu) \to L^2(X, \mu)$ denote the Hilbert--Schmidt integral operator associated to $K_B$.
So
\[ [T_{B}](g)(x) = \int K_B(x,s)g(s) \mu(\diff s).\]
Let $\lambda_1 \geq \lambda_2 \geq \ldots$ denote the eigenvalues of $T_B$, some of which might be negative.
By the spectral theorem of compact self-adjoint operators (Theorem~\ref{thm:spectral}), the eigenfunctions $\{\phi_i\}_{i \in \N}$ of $T_B$ form an orthonormal basis of $L^2(X, \mu)$, and the operator $T_{B}$ can be expressed as
\[ T_{B}= \sum\limits_i \lambda_i \langle \phi_i , \cdot \rangle \phi_i = \sum\limits_i \lambda_i \phi _i\otimes \phi_i .\] 

Let $\{e_i\}_{i \in \N}$ be another orthonormal basis of $L^2(X, \mu)$.
Define $M_B \colon L^2(X, \mu) \to L^2(X, \mu)$ as follows
 \[ M_B  = \sum\limits_i \langle e_i, \cdot \rangle \phi_i = \sum\limits_i \phi _i\otimes e_i.\]
Indeed, $M_B$ is an orthogonal operator and it can be thought of as a ``change of basis'' operator from the $\{e_i\}$ basis to the $\{\phi_i\}$ basis.
The adjoint of $M_B$, denoted by $M'_B \colon L^2(X, \mu) \to L^2(X, \mu)$, is defined as follows
\[ M'_B = \sum\limits_i \langle \phi_i, \cdot \rangle e_i = \sum\limits_i e _i\otimes \phi_i.\]
Lastly, define the Hilbert--Schmidt operator $S_B \colon L^2(X, \mu) \to L^2(X, \mu)$ as follows
\[ S_B= \sum\limits_i \lambda_i \langle e_i, \cdot \rangle e_i =\sum\limits_i \lambda_i e _i\otimes e_i.\]
With respect to the basis $\{e_i\}$, the operator $S_B$ can be thought of as an infinite analogue of a diagonal matrix.
It can be shown that $T_B = M_B \circ S_B \circ M'_B$, consequently $ M'_B \circ T_B \circ M_B = S_B $ since $ M_B$ is an orthogonal operator.

We are attempting to minimize $\strain(f) = \tr ((T_B-T_{\hat B})^2)$ over all symmetric positive semi-definite $L^2$-kernels $K_{\hat B}$ of rank at most $m$ (for $f\colon X\to \R^m$), where we allow $m=\infty$ (for $f\colon X\to \ell^2$).
Here $T_{\hat B}$ is defined as $[T_{\hat B}](g)(x) = \int K_{\hat B} (x,s)g(s) \mu(\diff s)$.
Let ${\hat \lambda_1} \geq {\hat \lambda_2} \geq \ldots \geq 0$ denote the eigenvalues of $T_{\hat B}$ with corresponding eigenfunctions $\{ \hat \phi_i \}_{i \in \N}$, where in the case of $f\colon X\to \R^m$ we require $\hat \phi_i=0$ for $i>m$.
We have a similar factorization $T_{\hat B} = M_{\hat B} \circ S_{\hat B} \circ {M'}_{\hat B}$ for the analogously defined operators $M_{\hat B}$, $S_{ \hat B}$, and ${M'}_{\hat B}$.
For the time being, we will think of ${\hat \lambda_1} \geq {\hat \lambda_2} \geq \ldots \geq 0$ as fixed, and will optimize $T_{\hat B}$ over all possible ``change of basis'' orthogonal operators $M_{\hat B}$.

Note $ M'_B \circ T_{\hat B} \circ M_B = T_G \circ  S_{\hat B}  \circ T'_{G}$, where $T_G = M'_B \circ M_{\hat B}$ is an orthogonal operator.
Therefore, we have
\begin{align}
\strain(f)
& = \tr ((T_B-T_{\hat B})^2)\nonumber \\
& = \tr ((T_B-T_{\hat B}) M_B M'_B (T_B-T_{\hat B})) \nonumber \\
& = \tr (M'_B (T_B-T_{\hat B})(T_B-T_{\hat B})  M_B) &&\text{by Proposition~\ref{prop:trace-conj}} \nonumber \\
& = \tr (M'_B (T_B-T_{\hat B}) M_B M'_B  (T_B-T_{\hat B}) M_B)\nonumber \\
& = \tr((  S_B  - T_G  S_{\hat B} T'_G)^2) \nonumber \\ 
& = \tr( S_B ^2)- \tr( S_B T_G  S_{\hat B} T'_G) - \tr( T_G  S_{\hat B} T'_G  S_B ) + \tr( S_{\hat B}^2) \nonumber \\ 
& =  \tr( S_B ^2)- 2\tr(S_B T_G  S_{\hat B} T'_G) + \tr( S_{\hat B}^2 ) \label{eq:strain-minimization}
\end{align} 
In the above, we are allowed to apply Proposition~\ref{prop:trace-conj} because the fact that $(T_B-T_{\hat B})$ is  Hilbert--Schmidt implies that $(T_B-T_{\hat B})M_B$, and hence also $M'_B(T_B-T_{\hat B})$, are Hilbert--Schmidt.

The loss function $\strain(f)$ is minimized by choosing the orthogonal operator $T_G$ that maximizes $\tr(S_B T_G  S_{\hat B} T'_G)$.
We compute
\begin{equation}\label{eq: trace-maximization}
\tr( S_B T_G  S_{\hat B} T'_G) = \sum\limits _{i,j} \lambda_i \hat \lambda_j \langle \hat{\phi _j}, \phi_i \rangle ^2 = \sum\limits_i \lambda_i\left( \sum\limits_j \hat \lambda_j \langle \hat{\phi _j}, \phi_i \rangle ^2\right) = \sum\limits_i h_i \lambda_i,
\end{equation}
where $h_i = \sum\limits_j \hat \lambda_j\langle \hat{\phi _j}, \phi_i \rangle ^2$.
Notice that
\[ h_i\geq 0 \mbox{ and } \sum\limits_i h_i = \sum\limits_j \hat \lambda_j \sum\limits_i \langle \hat \phi_j, \phi_i \rangle ^2 = \sum\limits_j \hat \lambda_j.\]
This follows from the fact that $\hat \phi_j = \sum\limits_i \langle \hat \phi_j, \phi_i \rangle \phi_i$ and
\[\sum\limits_i \langle \hat \phi_j, \phi_i \rangle ^2 = \left\langle \hat \phi_j,  \sum\limits_i \langle \hat \phi_j, \phi_i \rangle \phi_i \right\rangle = \langle \hat \phi_j, \hat \phi_j \rangle = 1.\]

Since $\sum\limits_i h_i = \sum\limits_j\hat{\lambda}_j$ is fixed and ${\hat \lambda_1} \geq {\hat \lambda_2} \geq \ldots \geq 0$, we maximize~\eqref{eq: trace-maximization} by choosing $h_1$ as large as possible.
Indeed, maximize $h_1$ by choosing $\hat{\phi}_1=\phi_1$, so that $\langle \hat \phi_1, \phi_1 \rangle = 1$ and $\langle \hat \phi_1, \phi_i \rangle = 0$ for all $i \neq 1$.
We will show that this choice of $\hat \phi_1$ can be completed into an orthonormal basis $\{\hat \phi_j \}_{j \in \N}$ for $L^2(X)$ in order to form a well-defined and optimal positive semidefinite kernel $K_{ \bar B}$ (of rank at most $m$ in the $f\colon X\to \R^m$ case).
Next, note $\sum\limits _{i=2} h_i \lambda _i$ is maximized by choosing $h_2$ as large as possible, which is done by choosing $\hat{\phi}_2=\phi_2$.
It follows that for ${\hat \lambda_1} \geq {\hat \lambda_2} \geq \ldots \geq 0$ fixed,~\eqref{eq: trace-maximization} is maximized, and hence \eqref{eq:strain-minimization} is minimized, by choosing $\hat{\phi}_i=\phi_i$ for all $i$.
Hence,
\[T_G = \sum\limits_{i,j} \langle \hat {\phi_i}, \phi_j \rangle e_j \otimes e_i =\sum\limits_i e _i\otimes e_i.\]
Therefore, we can do no better than choosing $T_G$ to be the identity operator, giving $M_{\hat B} = M_B$.

We will now show how to choose the eigenvalues ${\hat \lambda_1} \geq {\hat \lambda_2} \geq \ldots \geq 0$.
As in Lemmas~\ref{lem: closest-nonneg-l2} and~\ref{lem: closest-nonneg-Rm}, for $f\colon X\to\ell^2$ we define
\[ \bar \lambda_i =
\begin{cases} 
\lambda_i & \text{if }\lambda_i \geq 0 \\
0 & \text{if }\lambda_i < 0,
\end{cases}\]
and for $f\colon X\to\R^m$ we define
\[ \bar \lambda_i =
\begin{cases} 
\lambda_i & \text{if }\lambda_i \geq 0\text{ and }i\le m \\
0 & \text{if }\lambda_i < 0\text{ or }i>m.
\end{cases}\]
Define $K_{\bar B}(x,s) = \sum\limits_{i=1} ^ \infty \bar \lambda_i \phi _i (x) \phi_i (s)$, where each eigenfunction $\phi _i$ of $K_{\bar B}$ is the eigenfunction $\phi _i$ of $K_B$ corresponding to the eigenvalue $\bar \lambda _i$.

We compute, that over all possible choices of eigenvalues, the optimal $\strain$ is given by the choices $\bar{\lambda}_i$:
\begin{align*}
\strain(f) &= {\tr}((T_B-T_{\hat B})^2)\\ 
&= \tr((  S_B  - T_G  S_{\hat B} T'_G)^2) \\
& \geq \tr((  S_B  -  S_{\hat B} )^2) \\
& = \| \lambda - \hat \lambda\|_2^2\\
& \geq \| \lambda - \bar \lambda\|_2^2 &&\text{by Lemma~\ref{lem: closest-nonneg-l2} or~\ref{lem: closest-nonneg-Rm}}\\
& = \tr((  S_B  - S_{\bar B} )^2) \\ 
& = \tr(( M_B (  S_B  - S_ {\bar B})M'_B )^2) &&\text{by Proposition~\ref{prop:trace-conj}}  \\
& = {\tr}((T_B-T_{\bar B})^2).
\end{align*}
Therefore, the loss function $\strain(f)$ is minimized when $f$ and $T_{\hat B}$ are defined via the MDS embedding in Section~\ref{ss:infinite-mds}.
\end{proof}

The following table shows a comparison of various elements of classical MDS and infinite MDS (as described in section~\ref{ss:infinite-mds}). 
Our table is constructed analogously to a table on the Wikipedia page~\cite{wiki:FPCA} that shows a comparison of various elements of Principal Component Analysis (PCA) and Functional Principal Component Analysis (FPCA). 
In Chapter~\ref{chap: convergence}, we address convergence questions for MDS more generally.

\begin{sidewayspage}

\begin{table}[h]
    \caption{A comparison table of various elements of classical MDS and infinite MDS.}
    \label{table:sample}
    \begin{center}
\renewcommand{\arraystretch}{2.7}
\begin{tabular}{ |p{5cm}||p{7.5cm}|p{8.5cm}|  }
 \hline
 \textbf{Elements} &  \textbf{Classical MDS}  &  \textbf{Infinite MDS}\\
 \hline
Data  & $(X_n,d)$    & $(X, d_X, \mu)$ \\
 \hline
Distance Representation &   $D_{i,j} = d(x_i, x_j),\quad D \in \mathcal M _{n \times n}$ & $K_D(x,s) = d_X(x,s) \in L^2_{\mu \otimes \mu }(X\times X)$  \\
 \hline
Linear Operator &$B = -\frac{1}{2}HD^{(2)}H$   & $[T_{K_B}\phi](x) = \int \limits_X K_B(x,s) \phi(s) \mu(\diff s)$ \\
  \hline
 Eigenvalues & $\lambda_1 \geq \lambda_2 \geq \ldots \geq \lambda_n $ & $\hat \lambda_1 \geq \hat \lambda_2 \geq \ldots $\\
  \hline
Eigenvectors/Eigenfunctions &$v^{(1)}, v^{(2)}, \ldots, v^{(m)} \in \R^n$ & $\phi_1(x), \phi_2(x), \ldots \in L^2(X) $ \\
 \hline
Embedding in $\R^m$ or $\ell^2$& $  f(x_i) = \left(\sqrt{ \lambda_1} v^{(i)}_1,\sqrt{ \lambda_2} v^{(i)}_2, \ldots, \sqrt{ \lambda_m} v^{(i)}_m \right)$  &  $f(x) = \left(\sqrt{\hat \lambda_1} \phi_1(x),\sqrt{\hat \lambda_2} \phi_2(x),\sqrt{\hat \lambda_3} \phi_3(x),\ldots\right)$  \\
 \hline
 Strain Minimization& $\sum\limits_{i,j=1}^n(b_{i,j}-\hat{b}_{i,j})^2$  &$\int \int \left( K_B(x,t) - K_{\hat B}(x,t) \right)^2 \mu(\diff t) \mu(\diff x)$ \\
 \hline
\end{tabular}
    \end{center}
\end{table}
\end{sidewayspage}

\renewcommand{\arraystretch}{1}

\chapter{MDS of the Circle}
\label{chap: MDS circle}

In this chapter, we consider the MDS embeddings of the circle equipped with the (non-Euclidean) geodesic metric.
The material in this section is closely related to~\cite{von1941fourier}, even though~\cite{von1941fourier} was written prior to the invention of MDS.
By using the known eigendecomposition of circulant matrices, we are able to give an explicit description of the MDS embeddings of evenly-spaced points from the circle.
This is a motivating example for the convergence properties studied in Chapter~\ref{chap: convergence}, which will show that the MDS embeddings of more and more evenly-spaced points will converge to the MDS embedding of the entire geodesic circle.
We also remark that the geodesic circle is a natural example of a metric space whose MDS embedding in $\ell^2$ is better (in the sense of $\strain$-minimization) than its MDS embedding into $\R^m$ for any finite $m$.

We describe the eigenvalues and eigenvectors of circulant matrices in Section~\ref{sec: circulant}, and use this to describe the MDS embeddings of evenly-spaced points from the circle in Section~\ref{sec: circle}.
In Section~\ref{sec: vonNeumann}, we describe the relationship between the MDS embedding of the circle and the much earlier work of~\cite{von1941fourier,wilson1935certain}.
We also refer the reader to the conclusion (Chapter~\ref{chap: conclusion}) for open questions on the MDS embeddings of geodesic $n$-spheres $S^n$ into $\mathbb{R}^m$.

\section{Background on Circulant Matrices}\label{sec: circulant}

Let $\bB$ be an $n\times n$ matrix.
The matrix $\bB$ is \emph{circulant} if each row is a cyclic permutation of the first row, in the form as shown below.
\[ \bB = \begin{pmatrix}
b_{0} & b_{1} & b_{2} & \ldots & b_{n-3} & b_{n-2} & b_{n-1} \\
b_{n-1} & b_{0} & b_{1} & \ldots & b_{n-4} & b_{n-3} & b_{n-2} \\
b_{n-2} & b_{n-1} & b_{0} & \ldots & b_{n-5} & b_{n-4} & b_{n-3} \\
\vdots & \vdots & \vdots & & \vdots & \vdots & \vdots \\
b_{3} & b_{4} & b_{5} & \ldots & b_{0} & b_{1} & b_{2} \\
b_{2} & b_{3} & b_{4} & \ldots & b_{n-1} & b_{0} & b_{1} \\
b_{1} & b_{2} & b_{3} & \ldots & b_{n-2} & b_{n-1} & b_{0}
\end{pmatrix}
\]
The first row of a circulant matrix determines the rest of the matrix.
Furthermore, a circulant matrix $\bB$ has a basis of eigenvectors of the form $x_k(n) = \begin{pmatrix} w_{n}^{0k} & w_{n}^{1k} & \ldots & w_{n}^{(n-1)k} \end{pmatrix}^\top$ for $0\le k\le n-1$, where $ w_{n} = e^{\frac{2\pi i}{n}}$ and $\top$ denotes the transpose of a matrix.
The eigenvalue corresponding to $x_k(n)$ is
\begin{equation} \lambda_k(n) = \sum\limits\limits_{j=0}^{n-1} b_{j}w_{n}^{jk} = b_{0}w_{n}^{0k} + b_{1}w_{n}^{1k}+ \cdots + b_{n-1}w_{n}^{(n-1)k}.
\label{eq:eigenvalues}
\end{equation}

If the circulant matrix $\bB$ is also symmetric, then $b_i = b_{n-i}$ for $1 \leq i \leq n-1$.
Thus, $\bB$ has the form
\begin{equation}\label{eq:symmetric-circulant}
\bB =
\begin{pmatrix}
b_{0} & b_{1} & b_{2} & \ldots & b_{3} & b_{2} & b_{1} \\
b_{1} & b_{0} & b_{1} & \ldots & b_{4} & b_{3} & b_{2} \\
b_{2} & b_{1} & b_{0} & \ldots & b_{5} & b_{4} & b_{3} \\
\vdots & \vdots & \vdots & & \vdots & \vdots & \vdots \\
b_{3} & b_{4} & b_{5} & \ldots & b_{0} & b_{1} & b_{2} \\
b_{2} & b_{3} & b_{4} & \ldots & b_{1} & b_{0} & b_{1} \\
b_{1} & b_{2} & b_{3} & \ldots & b_{2} & b_{1} & b_{0}
\end{pmatrix}.
\end{equation}
The eigenvalues $\lambda_k(n)$ are then real and of the form
\begin{align*}
\lambda_{k} &= b_{0} + 2b_{1}\Re(w_{n}^{1k})+ \cdots + 2b_{\frac{n-1}{2}}\Re(w_{n}^{\frac{(n-1)}{2}k})&&\mbox{if }n\mbox{ is odd, and}\\
\lambda_{k} &= b_{0} + b_{{\frac{n}{2}}}w_{n}^{\frac{n}{2}k} + 2b_{1}\Re(w_{n}^{1k})+ \cdots + 2b_{\frac{n}{2}-1}\Re(w_{n}^{(\frac{n}{2}-1)k})&&\mbox{if }n\mbox{ is even.}     
\end{align*}

\section{MDS of Evenly-Spaced Points on the Circle}\label{sec: circle}

Let $S^1$ be the unit circle (i.e.\ with circumference $2\pi$), equipped with the geodesic metric $d$ which can be simply thought of as the shortest path between two given points in a curved space (in this case, the circle).
We let $S^1_n$ denote a set of $n$ evenly spaced points on $S^1$.
In Proposition~\ref{prop:S1n}, we show that the MDS embedding of $S^1_n$ in $\R^m$ lies, up to a rigid motion, on the curve $\gamma_n\colon S^1\to\R^m$ defined by
\[ \gamma_m(\theta) = (a_1(n)\cos(\theta),a_1(n)\sin(\theta),a_3(n)\cos(3\theta),a_3(n)\sin(3\theta),a_5(n)\cos(5\theta),a_5(n)\sin(5\theta),\ldots)\in\R^m, \]
where $\lim_{n\to\infty} a_j (n)= \frac{\sqrt{2}}{j}$ (with $j$ odd).

Let $\bD$ be the distance matrix of $S^1_n$, which is determined (up to symmetries of the circle) by
\[
\frac{n}{2\pi} d_{0j} =
\begin{cases}
j & \text{for } 0 \leq j \leq \lfloor\frac{n}{2}\rfloor \\
n - j & \text{for }  \lceil\frac{n}{2}\rceil \leq j \leq n-1,
\end{cases}
\]
where $\lfloor . \rfloor$ denotes the floor function and $\lceil . \rceil$ denotes the ceiling function.
Let $\bA = (a_{ij})$ with $a_{ij} = -\frac{1}{2}d^2_{ij}$ and let $\bB = \bH\bA\bH$, where $\bH = \bI - n^{-1}\mathbf{1}\mathbf{1}^\top$ is the centering matrix. 
The distance matrix $\bD$ is real-symmetric circulant, and it follows that $\bB$ is real-symmetric circulant with its form as shown in equation~\eqref{eq:symmetric-circulant}.
After applying symmetries of the circle, the entries of the first row vector $(b_0, b_1, \cdots, b_{n-1})$ of $\bB$ can be written explicitly as 
\[b_{j} = -\frac{1}{2}\left(d^2_{0j} - \frac{1}{n}\sum\limits\limits_{k=0}^{n-1} d^2_{0k} - \frac{1}{n}\sum\limits\limits_{k=0}^{n-1} d^2_{kj} + \frac{1}{n^2}\sum\limits\limits_{k=0}^{n-1} \sum\limits\limits_{l=0}^{n-1} d^2_{kl}\right) = -\frac{1}{2}(d^2_{0j} - c_n),\]
where a formula for $c_n=\frac{1}{n} \sum\limits_{k=0}^{n-1} d^2_{0k}$ can be computed explicitly\footnote{If $n$ is odd we have $c_n = \frac{2}{n}\sum\limits_{k=0}^{\frac{n-1}{2}} \left(\frac{2\pi k}{n}\right)^2 =\frac{\pi^2}{3n^2}(n^2-1)$,
and if $n$ is even we have
\[c_n = \frac{1}{n}\left(d_{0,\frac{n}{2}}^2 + 2\sum\limits\limits_{k=0}^{\frac{n}{2}-1} d_{0k}^2\right) = \frac{1}{n}\left(\pi^2 + 2\sum\limits\limits_{k=0}^{\frac{n}{2}-1} \left(\frac{2\pi k}{n}\right)^2\right) = \frac{\pi^2}{3n^2}(n-1)(n-2) + \frac{\pi ^2}{n}.\]
}.
Furthermore, let $\lambda_{k}(n)$ denote the $k$th eigenvalue of the matrix $\bB$ corresponding to the $k$th eigenvector $ x_k(n)$.
A basis of eigenvectors for $\bB$ consists of $ x_k(n) =\begin{pmatrix} w_{n}^{0k} & w_{n}^{1k} & \ldots & w_{n}^{(n-1)k} \end{pmatrix}^\top$ for $0\le k\le n-1$, where $ w_{n} = e^{\frac{i 2\pi}{n}}$.

\begin{lemma}\label{lem:eigenvalue_S1n}
We have $\lambda_{k}(n) = -\frac{1}{2}\sum\limits\limits_{j=0}^{n-1} d_{0j}^2w_{n}^{jk}$ for $0\le k\le n-1$.
\end{lemma}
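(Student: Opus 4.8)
The plan is to read the eigenvalues off directly from the circulant structure of $\bB$, using the general formula already recorded in Section~\ref{sec: circulant}. Since $\bB$ is real-symmetric circulant with first row $(b_0,b_1,\ldots,b_{n-1})$ and shares the eigenvectors $x_k(n)$ common to every $n\times n$ circulant matrix, equation~\eqref{eq:eigenvalues} gives $\lambda_k(n)=\sum_{j=0}^{n-1} b_j w_n^{jk}$. Thus the whole proof is a substitution: I would plug in the explicit entries $b_j=-\tfrac12(d_{0j}^2-c_n)$ established just above and simplify.

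Carrying this out, linearity of the sum splits the eigenvalue into two pieces,
\[ \lambda_k(n)=\sum_{j=0}^{n-1}\Bigl(-\tfrac12 d_{0j}^2+\tfrac12 c_n\Bigr)w_n^{jk} =-\frac12\sum_{j=0}^{n-1} d_{0j}^2 w_n^{jk}+\frac12 c_n\sum_{j=0}^{n-1} w_n^{jk}. \]
The first piece is exactly the target expression, so the lemma reduces to showing that the centering correction $\tfrac12 c_n\sum_{j=0}^{n-1} w_n^{jk}$ drops out.

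The key fact is the geometric-sum identity for roots of unity: because $w_n=e^{2\pi i/n}$ is a primitive $n$th root of unity, $\sum_{j=0}^{n-1} w_n^{jk}=0$ for $1\le k\le n-1$, while the sum equals $n$ when $k=0$. For every $k\neq 0$ the centering term therefore vanishes, and I obtain $\lambda_k(n)=-\tfrac12\sum_{j=0}^{n-1} d_{0j}^2 w_n^{jk}$ with no further work.

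The one case demanding separate attention is $k=0$, where $x_0(n)=\mathbf 1$ is the all-ones vector and the root-of-unity cancellation fails. Here I would instead observe that $\bH\mathbf 1=\mathbf 0$, so $\bB\mathbf 1=\bH\bA\bH\mathbf 1=\mathbf 0$ and hence $\lambda_0(n)=0$; this is the eigenvalue of the (discarded) constant centering direction, so the surviving centering term $\tfrac12 c_n\cdot n$ exactly cancels $-\tfrac12\sum_j d_{0j}^2$. I do not anticipate any genuine obstacle: the entire content is the circulant diagonalization from Section~\ref{sec: circulant} together with the roots-of-unity summation, and the only point requiring care is isolating the $c_n$ term and noting that its behavior at $k=0$ differs from the generic modes $1\le k\le n-1$ that actually drive the embedding.
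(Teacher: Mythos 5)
Your proof follows essentially the same route as the paper's: plug the entries $b_j=-\tfrac12(d_{0j}^2-c_n)$ into the circulant eigenvalue formula~\eqref{eq:eigenvalues}, split the sum by linearity, and kill the $c_n$ term with the roots-of-unity identity. For $1\le k\le n-1$ your argument and the paper's are identical and both are correct.

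The one place you diverge is the $k=0$ case, and here your extra care actually uncovers a defect that the paper's proof papers over: the paper simply asserts $\sum_{j=0}^{n-1}w_n^{jk}=0$, which is false for $k=0$ (the sum is $n$). Your computation is the right one --- $\bB\mathbf{1}=\bH\bA\bH\mathbf{1}=\mathbf{0}$, so $\lambda_0(n)=0$, equivalently the centering term $\tfrac12 c_n n=\tfrac12\sum_j d_{0j}^2$ cancels $-\tfrac12\sum_j d_{0j}^2$ --- but you then present this as if it confirms the stated formula at $k=0$, when in fact it refutes it: the lemma's right-hand side at $k=0$ is $-\tfrac12\sum_{j=0}^{n-1}d_{0j}^2$, which is strictly negative for $n\ge 2$, not $0$. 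So the conclusion you should draw from your own calculation is that the identity in the lemma holds only for $1\le k\le n-1$, and the correct statement for $k=0$ is $\lambda_0(n)=0$ (which the paper records separately in Corollary~\ref{prop:S1n-eigenvalues}). Nothing downstream is affected, since only the modes $k\ge 1$ enter the embedding, but your write-up should say explicitly that the lemma as stated is off at $k=0$ rather than fold that case into a successful proof.
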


\begin{proof}
We compute
\[\lambda_{k}(n) = \sum\limits\limits_{j=0}^{n-1} b_{j}w_{n}^{jk}= \sum\limits\limits_{j=0}^{n-1}-\frac{1}{2}(d^2_{0j} - c_n)w_{n}^{jk} = -\frac{1}{2}\sum\limits\limits_{j=0}^{n-1}d_{0j}^2 w_{n}^{jk} +\frac{1}{2} c_n\sum\limits\limits_{j=0}^{n-1} w_{n}^{jk},\]
and the property follows by noting that $\sum\limits_{j=0}^{n-1} w_{n}^{jk}=0$.
\end{proof}

\begin{corollary}\label{prop:S1n-eigenvalues}
The $k$th eigenvalue $\lambda_k(n)$ corresponding to the eigenvector $x_k(n)$ satisfies
\[\lambda_0(n)=0,\quad\mbox{and}\quad\lim_{n\to\infty} \frac{\lambda_{k}(n)}{n} = \frac{(-1)^{k+1}}{k^2}\quad\mbox{for}\quad k\ge 1.\]
\end{corollary}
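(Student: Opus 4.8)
The plan is to dispatch the two assertions separately: $\lambda_0(n)=0$ follows from a structural observation, while the $k\ge 1$ limit is obtained by recognizing the eigenvalue sum of Lemma~\ref{lem:eigenvalue_S1n} as a Riemann sum. For the first claim, I would note that the eigenvector $x_0(n)=(w_n^{0},\dots,w_n^{0})^\top$ is simply the all-ones vector $\mathbf{1}$. Since $\bB=\bH\bA\bH$ and the centering matrix satisfies $\bH\mathbf{1}=\mathbf{1}-n^{-1}\mathbf{1}\mathbf{1}^\top\mathbf{1}=\mathbf{0}$, we get $\bB\mathbf{1}=\mathbf{0}$, hence $\lambda_0(n)=0$. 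Equivalently, $\lambda_0(n)=\sum_j b_j=-\tfrac12\sum_j(d_{0j}^2-c_n)=0$ directly from the definition of $c_n$.

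For $k\ge 1$, I would begin from Lemma~\ref{lem:eigenvalue_S1n}, which gives $\lambda_k(n)=-\tfrac12\sum_{j=0}^{n-1}d_{0j}^2 w_n^{jk}$. Using the symmetry $d_{0,n-j}=d_{0j}$ together with $\cos\!\big(2\pi(n-j)k/n\big)=\cos(2\pi jk/n)$, the imaginary parts cancel, so $\lambda_k(n)=-\tfrac12\sum_{j=0}^{n-1}d_{0j}^2\cos(2\pi jk/n)$. Substituting $d_{0j}=\tfrac{2\pi}{n}\min(j,n-j)$ and folding the sum across the symmetric index range, I obtain, for $n$ odd with $M=\tfrac{n-1}{2}$,
\[
\lambda_k(n) = -\frac{4\pi^2}{n^2}\sum_{j=1}^{M} j^2\cos\!\left(\frac{2\pi jk}{n}\right),
\]
and an analogous expression for $n$ even that differs only by the single antipodal term $-\tfrac12\pi^2(-1)^k$, which is $O(1)$ and therefore negligible after dividing by $n$.

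Dividing by $n$ and writing $x_j=j/n$ with spacing $1/n$, I would recognize
\[
\frac{\lambda_k(n)}{n} = -4\pi^2\sum_{j=1}^{M}\left(\frac{j}{n}\right)^2\cos\!\left(2\pi k\,\frac{j}{n}\right)\frac{1}{n} \;\longrightarrow\; -4\pi^2\int_0^{1/2} x^2\cos(2\pi k x)\,\diff x
\]
as a Riemann sum of the continuous integrand $x^2\cos(2\pi kx)$ on $[0,\tfrac12]$, since $M/n\to\tfrac12$. Integrating by parts twice evaluates the integral to $\tfrac{(-1)^k}{4\pi^2 k^2}$, using $\sin(\pi k)=0$ and $\cos(\pi k)=(-1)^k$ at the upper endpoint (both endpoint contributions from the $\sin$-terms vanish). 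The limit then becomes $-4\pi^2\cdot\tfrac{(-1)^k}{4\pi^2 k^2}=\tfrac{(-1)^{k+1}}{k^2}$, as claimed.

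The main obstacle is not any single computation but the rigorous justification of the Riemann-sum convergence: one must verify, uniformly across the parity-dependent endpoint $M$, that the discrete sums over $j=1,\dots,M$ with $M/n\to\tfrac12$ converge to the integral over $[0,\tfrac12]$, and that the boundary and antipodal terms contribute nothing in the limit. Because $x^2\cos(2\pi kx)$ is continuous on the compact interval $[0,\tfrac12]$ this is standard, but the even/odd bookkeeping and the treatment of the endpoint $x=\tfrac12$ are where the argument must be written with care.
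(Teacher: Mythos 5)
Your proposal is correct and follows essentially the same route as the paper: both dispatch $\lambda_0(n)=0$ via the double-centering of $\bB$, then start from Lemma~\ref{lem:eigenvalue_S1n}, re-index the sum using the circle's symmetry, recognize a Riemann sum, and evaluate the limiting integral by integration by parts. The only cosmetic difference is that you fold to the real form $-4\pi^2\int_0^{1/2}x^2\cos(2\pi kx)\,\diff x$ with separate even/odd bookkeeping, while the paper keeps the complex exponential over the symmetric range $j=-\lfloor\tfrac{n}{2}\rfloor,\dots,\lfloor\tfrac{n-1}{2}\rfloor$ and integrates $x^2e^{2\pi xki}$ over $[-\tfrac12,\tfrac12]$, which handles both parities at once; the two integrals agree by evenness.
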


\begin{proof}
If $k=0$, we have $x_0(n) = \mathbf{1}$ with eigenvalue $\lambda_{0}(n) = 0$ since $\bB$ is a double-centered matrix.
Hence we restrict attention to $1\le k\le n-1$.
By Lemma~\ref{lem:eigenvalue_S1n}, we have we have
\[
\frac{\lambda_{k}(n)}{n} 
= -\frac{1}{2n} \sum\limits_{j=0}^{n-1} d_{0j}^2w_{n}^{jk} 
= -\frac{1}{2n}\left(\frac{2\pi}{n}\right)^2\sum\limits_{j
= -\lfloor\frac{n}{2}\rfloor}^{\lfloor\frac{n-1}{2}\rfloor} j^2e^{(\frac{2\pi}{n}jk)i} 
= -2\pi^2\left(\frac{1}{n}\right)\sum\limits_{j
= -\lfloor\frac{n}{2}\rfloor}^{\lfloor\frac{n-1}{2}\rfloor} \left(\frac{j}{n}\right)^2 e^{(2\pi (\frac{j}{n})k)i} 
=: S_n.\]
Since $S_n$ is the left-hand Riemann sum (with $n$ subintervals) of the below integral, we use integration by parts to get
\begin{equation}\label{eq: limit evals}
\lim_{n\to\infty} \frac{\lambda_{k}(n)}{n} = \lim_{n\to\infty} S_n = -2\pi^2\int_{-\frac{1}{2}}^{\frac{1}{2}} x^2e^{2\pi xki}\ \diff x = \frac{(-1)^{k+1}}{k^2}.
\end{equation}
\end{proof}

\begin{lemma} \label{lem:odd-eigen}
For all $k$ odd, there exists $N \in \N$ sufficiently large (possibly depending on $p$ odd) such that for all $n\ge N$, the eigenvalues $\lambda_k(n)$ satisfy the following property 
\[\lambda_1(n) \geq \lambda_3(n) \geq\lambda_5(n) \geq \cdots \geq \lambda_p(n) \geq 0,\]
where $\lambda_k(n)$ is the eigenvalue corresponding to $x_k(n)$.
\end{lemma}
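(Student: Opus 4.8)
The plan is to deduce the claimed ordering directly from the eigenvalue asymptotics already established in Corollary~\ref{prop:S1n-eigenvalues}. First I would observe that for $k$ odd we have $(-1)^{k+1}=1$, so that Corollary~\ref{prop:S1n-eigenvalues} specializes to
\[
\lim_{n\to\infty}\frac{\lambda_k(n)}{n}=\frac{1}{k^2}.
\]
Restricting to the finite set of odd indices $k\in\{1,3,5,\ldots,p\}$, the corresponding limits are $1,\tfrac{1}{9},\tfrac{1}{25},\ldots,\tfrac{1}{p^2}$, which form a strictly decreasing sequence of strictly positive real numbers. This monotonicity of the limits is the entire geometric content; the rest is extracting a single threshold $N$ that works uniformly.

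Next I would turn each strict inequality between limits into an inequality between the eigenvalues themselves for $n$ large. For each consecutive pair of odd indices $j$ and $j+2$ with $j+2\le p$, the strict gap $\tfrac{1}{j^2}-\tfrac{1}{(j+2)^2}>0$, combined with the convergence of $\tfrac{\lambda_j(n)}{n}$ and $\tfrac{\lambda_{j+2}(n)}{n}$, produces an integer $N_j$ such that $\tfrac{\lambda_j(n)}{n}>\tfrac{\lambda_{j+2}(n)}{n}$ for all $n\ge N_j$; since $n>0$ this is equivalent to $\lambda_j(n)>\lambda_{j+2}(n)$. Similarly, because $\tfrac{1}{p^2}>0$, there is an integer $N_p$ with $\tfrac{\lambda_p(n)}{n}>0$, hence $\lambda_p(n)>0$, for all $n\ge N_p$.

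Finally I would set $N=\max\{N_1,N_3,\ldots,N_{p-2},N_p\}$. This maximum is over finitely many indices precisely because $p$ is fixed, which is exactly why $N$ is permitted to depend on $p$. For every $n\ge N$ all of the individual inequalities hold simultaneously, yielding the chain
\[
\lambda_1(n)\ge\lambda_3(n)\ge\cdots\ge\lambda_p(n)\ge 0
\]
(each strict inequality implying the asserted non-strict one). I do not expect a genuine obstacle here beyond the bookkeeping: the only subtlety worth flagging is that the separation must exploit the \emph{strict} gaps $\tfrac{1}{j^2}-\tfrac{1}{(j+2)^2}$, since an argument invoking the limits without a positive gap would not let the convergence of the normalized eigenvalues close the inequalities for finite $n$.
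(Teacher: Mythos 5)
Your proposal is correct and follows essentially the same route as the paper: both deduce the chain of inequalities from the limits $\lim_{n\to\infty}\lambda_k(n)/n = 1/k^2$ for odd $k$ (Corollary~\ref{prop:S1n-eigenvalues}) and the strict monotonicity of $1/k^2$. The paper's own proof is terser --- it asserts the existence of $N_p$ directly from the limits --- whereas you spell out the pairwise-gap and finite-maximum bookkeeping, which is exactly the content the paper leaves implicit.
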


\begin{proof}
From equation~\eqref{eq: limit evals}, for $k$ odd we have,
\[\lim_{n\to\infty} \frac{\lambda_{k}(n)}{n} = \frac{1}{k^2}.\]
Therefore, for each $k$ odd, $\exists N_p \in \N$ such that $\forall n \geq N_p$, we have 
\begin{equation}\lambda_1(n) \geq \lambda_3(n) \geq\lambda_5(n) \geq \cdots \geq \lambda_p(n) \geq 0. \end{equation}
\end{proof}

\begin{remark}
We conjecture that Lemma~\ref{lem:odd-eigen} is true for $N_p=1$.
\end{remark}

For a real-symmetric circulant matrix, the real and imaginary parts of the eigenvectors $x_k(n)$ are eigenvectors with the same eigenvalue.
These eigenvectors correspond to a discrete cosine transform and a discrete sine transform.
Let $u_k(n)$ and $v_k(n)$ denote the real and imaginary parts of $x_k(n)$ respectively.
In general, \[ u_k(n) = \begin{pmatrix} 1 & \cos\theta & \cos 2 \theta & \ldots & \cos(n-1) \theta \end{pmatrix}^\top, \] and \[v_k(n) = \begin{pmatrix}0 & \sin \theta & \sin 2 \theta & \ldots & \sin (n-1) \theta \end{pmatrix}^\top,\] where $\theta =\frac{2\pi k}{n}$.

Since $\bB$ is a real symmetric matrix, its orthogonal eigenvectors can also be chosen real.
We have $\Re (w_{n}^{k})= \Re (w_{n}^{n-k})$ and $\Im (w_{n}^{k})= - \Im (w_{n}^{n-k})$.
A new basis of $n$ eigenvectors in $\R^n$ of $\bB$ can be formed from $u_k(n)$ and $v_k(n)$ as follows.
Let $\mathcal E =\{ u_k(n), v_k(n)~|~ k = 1,2, \cdots, \frac {n}{2}-1\}$ and $\mathcal O = \{ u_k(n), v_k(n)~|~  k = 1,2, \cdots, \frac{n-1}{2}\}$.
If $n$ is even, a set of $n$ linearly independent eigenvectors is 
\[\mathcal B_e =\mathcal E  \cup \{u_0(n),u_\frac{n}{2}(n)\}.\]
Furthermore, if $n$ is odd, a set of $n$ linearly independent eigenvectors is 
\[\cB_o = \mathcal O \cup \{u_0(n) \}.\]
In each of the sets $\mathcal E$ and $\mathcal O$, the eigenvalue of $u_k(n)$ is the same as the eigenvalue of $v_k(n)$ for each $k$.
So, the eigenvalues in $\mathcal E$ and $\mathcal O$ come in pairs.
The only eigenvalues that don't come in pairs correspond to eigenvectors $x_k(n)$ that are purely real, namely $u_0(n)$, and $u_\frac{n}{2}(n)$ (for $n$ even).
Indeed, the eigenvalue corresponding to $u_0(n)$ is $\lambda_0 (n) = 0$.
Furthermore, the new eigenvectors are real and mutually orthogonal.

\begin{lemma} \label{prop:L2-norm}
The $\ell^2$-norm of $u_k(n)$ and $v_k(n)$ satisfy the following property 
\begin{equation}
\lim_{n\to\infty} {\frac{\|u_k(n)\|}{\sqrt n}} = \lim_{n\to\infty} {\frac{\|v_k(n)\|}{\sqrt n}}= {\frac{1}{\sqrt 2}}.
\end{equation}
\end{lemma}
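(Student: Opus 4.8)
The plan is to compute $\|u_k(n)\|^2$ and $\|v_k(n)\|^2$ exactly, since each is a sum of squared cosines or sines evaluated at the sample points $j\theta = 2\pi k j/n$. First I would write
\[
\|u_k(n)\|^2 = \sum_{j=0}^{n-1}\cos^2\!\left(\tfrac{2\pi k j}{n}\right), \qquad \|v_k(n)\|^2 = \sum_{j=0}^{n-1}\sin^2\!\left(\tfrac{2\pi k j}{n}\right),
\]
and apply the half-angle identities $\cos^2 t = \tfrac12(1+\cos 2t)$ and $\sin^2 t = \tfrac12(1-\cos 2t)$ to reduce each of these to $\tfrac{n}{2}\pm\tfrac12\sum_{j=0}^{n-1}\cos\!\left(\tfrac{4\pi k j}{n}\right)$, with a plus sign for $u_k(n)$ and a minus sign for $v_k(n)$.

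The key step is then to evaluate $\sum_{j=0}^{n-1}\cos\!\left(\tfrac{4\pi k j}{n}\right) = \Re\sum_{j=0}^{n-1}\left(w_n^{2k}\right)^j$. Since $\left(w_n^{2k}\right)^n = e^{4\pi i k} = 1$, whenever $w_n^{2k}\neq 1$ the finite geometric series sums to $\tfrac{(w_n^{2k})^n-1}{w_n^{2k}-1}=0$; and for any fixed $k\ge 1$ we have $w_n^{2k}\neq 1$ as soon as $n>2k$. Thus for all sufficiently large $n$ both sums collapse to $\|u_k(n)\|^2 = \|v_k(n)\|^2 = \tfrac{n}{2}$ exactly, so that dividing by $n$ and taking square roots yields $\tfrac{1}{\sqrt 2}$. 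In particular the stated quantities are eventually constant in $n$, not merely convergent.

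Alternatively, and in keeping with the Riemann-sum argument already used for the eigenvalues in Corollary~\ref{prop:S1n-eigenvalues}, one could observe that $\tfrac{1}{n}\|u_k(n)\|^2$ is the left-hand Riemann sum (with $n$ subintervals) of $\cos^2(2\pi k x)$ over $[0,1]$, which converges to $\int_0^1 \cos^2(2\pi k x)\,\diff x = \tfrac12$; the identical computation with $\sin^2$ handles $v_k(n)$. I would likely present the exact evaluation as the main argument, since it gives a cleaner statement, and mention the Riemann-sum version as the computation that fits the surrounding material.

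I do not anticipate a genuine obstacle: the only point requiring care is confirming that the root-of-unity sum vanishes rather than degenerating. It degenerates precisely when $2k\equiv 0 \pmod n$, which is exactly the case of the unpaired eigenvector $u_{n/2}(n)$ for $n$ even (where $\|u_{n/2}(n)\|^2 = n$ instead of $\tfrac{n}{2}$); this is harmless here because for any fixed $k$ that congruence fails once $n>2k$.
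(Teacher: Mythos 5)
Your proposal is correct, and your primary argument takes a genuinely different route from the paper's. The paper proves the lemma by exactly the method you relegate to an alternative: it interprets $\tfrac{1}{n}\|u_k(n)\|^2$ and $\tfrac{1}{n}\|v_k(n)\|^2$ as left-hand Riemann sums of $\cos^2(2\pi kx)$ and $\sin^2(2\pi kx)$ over $[0,1]$, passes to the integrals, and evaluates each as $\tfrac12$ via double-angle formulas. Your main argument---half-angle identities plus the vanishing of the root-of-unity geometric series $\sum_{j=0}^{n-1}\bigl(w_n^{2k}\bigr)^j$ once $n>2k$---is sharper: it yields $\|u_k(n)\|^2=\|v_k(n)\|^2=\tfrac{n}{2}$ \emph{exactly} for all sufficiently large $n$, so the normalized norms are eventually constant rather than merely convergent, and it isolates precisely when the computation degenerates (namely $n\mid 2k$, the case of the unpaired eigenvector $u_{n/2}(n)$ for $n$ even), a point the paper's limit argument never has to confront. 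What the paper's Riemann-sum version buys is stylistic uniformity with the eigenvalue computation in Corollary~\ref{prop:S1n-eigenvalues}, which uses the same limit-of-Riemann-sums device; your exact evaluation buys a stronger conclusion and an explicit accounting of the exceptional case. Both are complete and correct proofs of the stated limit.
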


\begin{proof} Consider the following where $\theta =\frac{2\pi k}{n}$, and where we have used the fact that a limit of Riemann sums converges to the corresponding Riemann integral.
\[\lim_{n\to\infty} \frac{\|u_k(n)\|^2}{n} = \lim_{n\to\infty} \frac{1}{n} \sum\limits _{j=0}^{n-1} \cos^2(j\theta) = \lim_{n\to\infty}\frac{1}{n}\sum\limits _{j=0}^{n-1}\cos^2(2\pi k (\tfrac{j}{n})) = \int\limits_0^1 \cos^2(2\pi k x)\diff x = \frac{1}{2}, \mbox{ and }\] 
\[ \lim_{n\to\infty} \frac{\|v_k(n)\|^2}{n} = \lim_{n\to\infty} \frac{1}{n} \sum\limits _{j=0}^{n-1} \sin^2(j\theta) = \lim_{n\to\infty}\frac{1}{n}\sum\limits _{j=0}^{n-1}\sin^2(2\pi k (\tfrac{j}{n})) = \int\limits_0^1 \sin ^2(2\pi k x)\diff x = \frac{1}{2}.\]
The definite integrals can be computed by hand using double angle formulas. 
Therefore, the result follows.
\end{proof}

Let $\bLambda_m$ be the $m\times m$ diagonal matrix of the largest $m$ eigenvalues of $\bB$ sorted in descending order, and let $\bGamma_m$ be the $n\times m$ matrix of the corresponding $m$ eigenvectors.
From Lemma~\ref{lem:odd-eigen}, $\bLambda_m$ can be written as follows,
\[
\bLambda_m  =
\begin{pmatrix}
\lambda_1 (n) & & & & &\\
& \lambda_1 (n) & & & &\\
& & \lambda_3 (n) & & &\\
& & & \lambda_3 (n) & &\\
& & & &  \ddots & \\
& & & & & \lambda_{m}(n)
\end{pmatrix}
\] 
and $\bGamma_m$ can be constructed in different of ways.
We'll construct $\bGamma_m$ as follows,
\[\bGamma_m  = \begin{pmatrix} \frac{u_1(n)}{\|u_1(n)\|}& \frac{v_1(n)}{\|v_1(n)\|}& \frac{u_3(n)}{\|u_3(n)\|}& \frac{v_3(n)}{\|v_3(n)\|}& \cdots & \frac{v_m(n)}{\|v_m(n)\|}\end{pmatrix}.\] 
The classical MDS embedding of $S^1_n$ consists of the $n$ points in $\R^m$ whose coordinates are given by the $n$ rows of the $n\times m$ matrix $\bX=\bGamma_m\bLambda_m^{1/2}$.

\begin{proposition}\label{prop:S1n}
The classical MDS embedding of $S^1_n$ lies, up to a rigid motion of $\R^m$, on the curve $\gamma_m\colon S^1\to\R^m$ defined by
\[ \gamma_m(\theta) = (a_1(n)\cos(\theta),a_1(n)\sin(\theta),a_3(n)\cos(3\theta),a_3(n)\sin(3\theta),a_5(n)\cos(5\theta),\ldots)\in\R^m, \]
where $\lim_{n\to\infty} a_j (n)= \frac{\sqrt{2}}{j}$ (with $j$ odd).
\end{proposition}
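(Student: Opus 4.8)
The plan is to read off the coordinates of the classical MDS embedding directly from the factorization $\bX=\bGamma_m\bLambda_m^{1/2}$ and recognize each row as a point on $\gamma_m$. Index the evenly-spaced points by $r=0,1,\ldots,n-1$, so that the $r$th point of $S^1_n$ sits at angle $\theta_r=\tfrac{2\pi r}{n}$. Recalling that the $j$th entry of $u_k(n)$ is $\cos(j\cdot\tfrac{2\pi k}{n})$ and of $v_k(n)$ is $\sin(j\cdot\tfrac{2\pi k}{n})$, the $r$th entry of the normalized column $u_k(n)/\|u_k(n)\|$ is $\cos(k\theta_r)/\|u_k(n)\|$, and similarly for the sine column. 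Since $\bLambda_m^{1/2}$ is diagonal with the paired entries $\sqrt{\lambda_1(n)},\sqrt{\lambda_1(n)},\sqrt{\lambda_3(n)},\ldots$ along its diagonal, right-multiplication scales column $k$ by $\sqrt{\lambda_k(n)}$, so the $r$th row of $\bX$ has cosine-coordinate $\tfrac{\sqrt{\lambda_k(n)}}{\|u_k(n)\|}\cos(k\theta_r)$ and sine-coordinate $\tfrac{\sqrt{\lambda_k(n)}}{\|v_k(n)\|}\sin(k\theta_r)$, where $k$ ranges over the odd frequencies retained by Lemma~\ref{lem:odd-eigen}. The even frequencies correspond to negative eigenvalues by Corollary~\ref{prop:S1n-eigenvalues} and are discarded, which is exactly why only odd harmonics appear in $\gamma_m$.

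The key algebraic step is to show that the cosine and sine coordinates carry a common amplitude, i.e.\ that $\|u_k(n)\|=\|v_k(n)\|$ for the relevant odd $k$. I would prove this by computing
\[ \|u_k(n)\|^2-\|v_k(n)\|^2=\sum_{j=0}^{n-1}\cos\left(j\cdot\tfrac{4\pi k}{n}\right)=\Re\sum_{j=0}^{n-1}e^{i j\cdot 4\pi k/n}, \]
and summing the geometric series: whenever $n\nmid 2k$ (in particular for odd $k$ with $2k<n$, which holds once $n$ is large relative to $m$) the numerator $e^{i4\pi k}-1$ vanishes while the denominator does not, so the sum is $0$. Hence $\|u_k(n)\|=\|v_k(n)\|$ exactly, and setting $a_k(n):=\tfrac{\sqrt{\lambda_k(n)}}{\|u_k(n)\|}$ makes the $r$th row of $\bX$ equal to $\gamma_m(\theta_r)$ on the nose. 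This shows the embedding points lie on $\gamma_m$. The phrase ``up to a rigid motion'' absorbs the non-uniqueness of the classical MDS output: each retained eigenvalue has multiplicity two, so the orthonormal eigenbasis, and hence the embedding, is only determined up to orthogonal transformations within the repeated eigenspaces together with the sign and translation freedom inherent in cMDS; any admissible choice differs from the cosine--sine basis $\bGamma_m$ by an isometry of $\R^m$.

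Finally I would compute the limiting amplitude. Writing $a_k(n)=\sqrt{\lambda_k(n)/n}\big/\big(\|u_k(n)\|/\sqrt{n}\big)$ and invoking Corollary~\ref{prop:S1n-eigenvalues}, which gives $\lambda_k(n)/n\to(-1)^{k+1}/k^2=1/k^2$ for $k$ odd, together with Lemma~\ref{prop:L2-norm}, which gives $\|u_k(n)\|/\sqrt{n}\to 1/\sqrt{2}$, yields
\[ \lim_{n\to\infty}a_k(n)=\frac{\sqrt{1/k^2}}{1/\sqrt2}=\frac{\sqrt2}{k} \]
for $k$ odd, as claimed. I expect the main obstacle to be bookkeeping rather than any deep difficulty: one must keep the indexing of points, frequencies, and eigenvalue multiplicities straight, confirm via Lemma~\ref{lem:odd-eigen} that exactly the odd frequencies survive, and state the ``up to rigid motion'' clause precisely enough to cover every valid eigenbasis. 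The exact identity $\|u_k(n)\|=\|v_k(n)\|$ is the one genuinely nontrivial computation, since without it the cosine and sine coordinates would share a common amplitude only asymptotically rather than for each fixed $n$.
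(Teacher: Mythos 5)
Your proposal is correct and follows the same overall route as the paper's proof: read the embedding coordinates off the rows of $\bX=\bGamma_m\bLambda_m^{1/2}$, identify the cosine and sine coordinates at frequencies $k=1,3,5,\ldots$ (the odd frequencies surviving by Lemma~\ref{lem:odd-eigen}, the even ones being discarded since their eigenvalues are asymptotically negative by Corollary~\ref{prop:S1n-eigenvalues}), and compute the limiting amplitudes from Corollary~\ref{prop:S1n-eigenvalues} and Lemma~\ref{prop:L2-norm}. The genuine difference is your exact identity $\|u_k(n)\|=\|v_k(n)\|$, proved by summing the geometric series $\sum_{j=0}^{n-1}e^{ij\,4\pi k/n}$, and this is not mere bookkeeping: the paper's own proof introduces two amplitudes $a_j(n)=\sqrt{\lambda_j(n)}/\|u_j(n)\|$ and $b_j(n)=\sqrt{\lambda_j(n)}/\|v_j(n)\|$, shows only that \emph{both limits} equal $\sqrt{2}/j$, and then asserts the points lie on a curve $\gamma_m$ written with a single common amplitude $a_j(n)$ per frequency. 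Strictly speaking, that conclusion requires $a_j(n)=b_j(n)$ at each fixed $n$, which the paper never establishes; your computation closes exactly this gap (and shows more, namely $\|u_k(n)\|^2=\|v_k(n)\|^2=n/2$ whenever $n\nmid 2k$, so the Riemann-sum limit in Lemma~\ref{prop:L2-norm} could be replaced by an exact evaluation). Your explicit treatment of the rigid-motion clause, via the orthogonal freedom inside the multiplicity-two eigenspaces, is likewise more careful than the paper's, which leaves this implicit. In short, your argument is a strengthening of the paper's proof rather than a departure from it.
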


\begin{proof}	
The coordinates of the points of the MDS embedding of $S^1_n$ are given by the $n\times m$ matrix $\bX=\bGamma_m\bLambda_m^{1/2}$.
This implies the coordinates of the $n$ configuration points in $\R^m$ are given by \[(a_1(n)\cos(\theta),b_1(n)\sin(\theta),a_3(n)\cos(3\theta),b_3(n)\sin(3\theta),a_5(n)\cos(5\theta),b_5(n)\sin(5\theta),\ldots)\in\R^m,\]
where $\theta = \frac{2\pi k}{n}$ and $0\leq k \leq n-1$ and (for $j$ odd)
\[a_j (n)= \frac{\sqrt{\lambda_j(n)}}{\|u_j(n)\|} \quad \mbox{and} \quad b_j (n) = \frac{\sqrt{\lambda_j(n)}}{\|v_j(n)\|}.\]
From Corollary~\ref{prop:S1n-eigenvalues} and Lemma~\ref{prop:L2-norm}, we have
\[\lim_{n\to\infty} a_j(n) 
= \lim_{n\to\infty} \frac{\sqrt{\lambda_j(n)}}{\|u_j(n)\|}
=\lim_{n\to\infty} \frac{\frac{\sqrt{\lambda_j(n)}}{\sqrt n}}{\frac{\|u_j(n)\|}{\sqrt n}} 
= \frac{\sqrt 2}{j},\]
and similarly $\lim_{n\to\infty} b_j(n) = \frac{\sqrt 2}{j}$.
Therefore, we can say that the MDS embedding of $S^1_n$ lies, up to a rigid motion of $\R^m$, on the curve $\gamma_m\colon S^1\to\R^m$ defined by
\[ \gamma_m(\theta) = (a_1(n)\cos(\theta),a_1(n)\sin(\theta),a_3(n)\cos(3\theta),a_3(n)\sin(3\theta),a_5(n)\cos(5\theta),\ldots)\in\R^m, \]
where $\lim_{n\to\infty} a_j (n)= \frac{\sqrt{2}}{j}$ (with $j$ odd).
\end{proof}
 
\begin{figure}[h]
\centering  \includegraphics[width=0.7\textwidth]{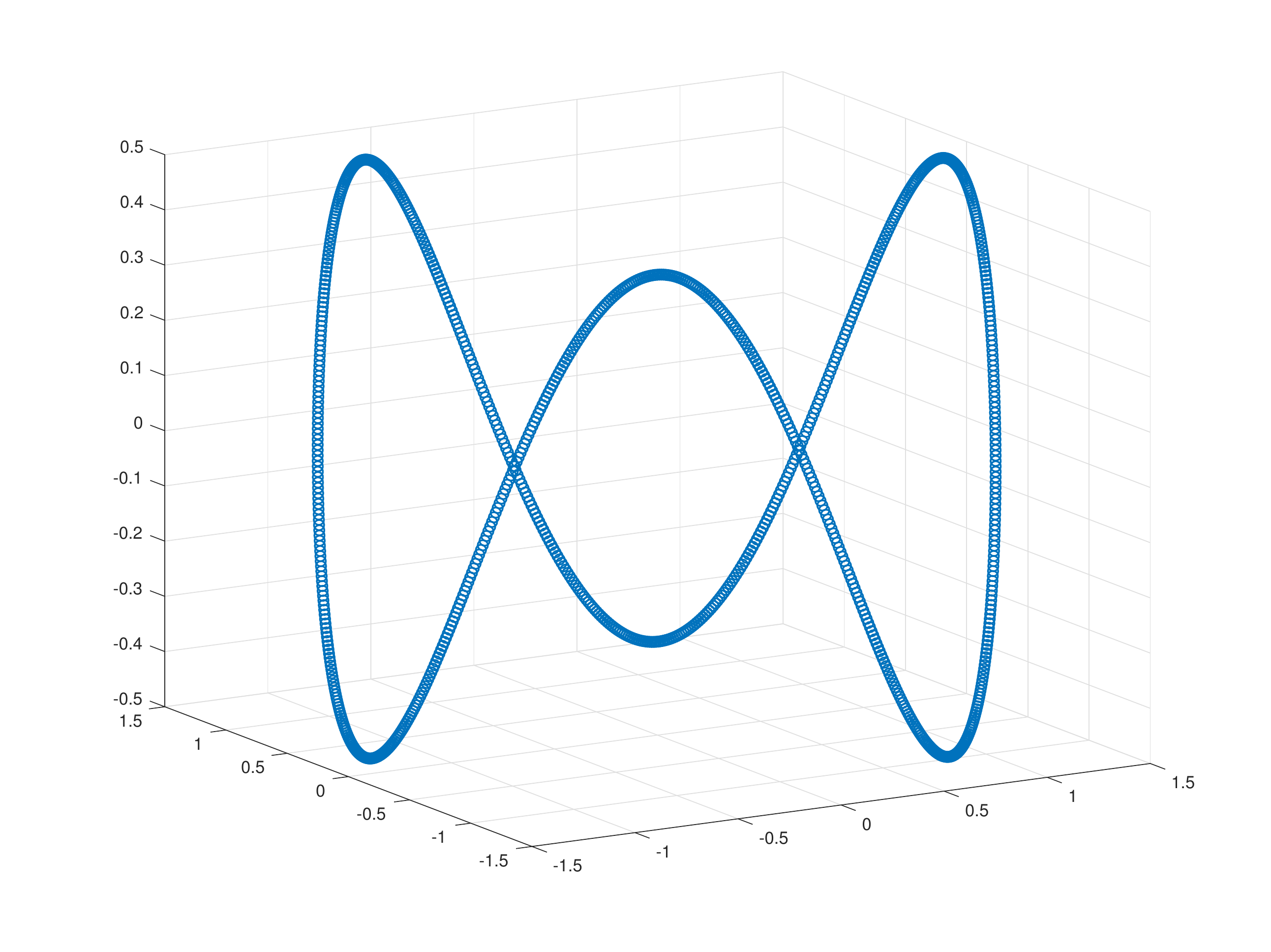}
\caption{MDS embedding of $S^1_{1000}$.}
\label{fig: MDS of Circle}
\end{figure}

Indeed, Figure~\ref{fig: MDS of Circle} shows the MDS configuration in $\R ^3$ of 1000 points on $S^1$ obtained using the three largest positive eigenvalues.

\section{Relation to Work of von Neumann and Schoenberg}\label{sec: vonNeumann}

The MDS embeddings of the geodesic circle are closely related to~\cite{von1941fourier}, which was written prior to the invention of MDS.
In~\cite[Theorem~1]{von1941fourier}, von Neumann and Schoenberg describe (roughly speaking) which metrics on the circle one can isometrically embed into the Hilbert space $\ell^2$.
The geodesic metric on the circle is not one of these metrics.
However, the MDS embedding of the geodesic circle into $\ell^2$ must produce a metric on $S^1$ which is of the form described in~\cite[Theorem~1]{von1941fourier}.
See also~\cite[Section~5]{wilson1935certain} and~\cite{blumenthal1970theory,bogomolny2003spectral,dattorro2010convex}.

\chapter{Convergence of MDS}
\label{chap: convergence}

We saw in the prior chapter how sampling more and more evenly-spaced points from the geodesic circle allowed one to get a sense of how MDS behaves on the entire circle. 
In this chapter, we address convergence questions for MDS more generally.
Convergence is well-understood when each metric space has the same finite number of points~\cite{sibson1979studies}, but we are also interested in convergence when the number of points varies and is possibly infinite.

This chapter is organized as follows.
In Section~\ref{sec: robustness}, we survey Sibson's perturbation analysis~\cite{sibson1979studies} for MDS on a fixed number of $n$ points. 
Next, in Section~\ref{sec: convergence random}, we survey results of~\cite{bengio2004learning,koltchinskii2000random} on the convergence of MDS when $n$ points $\{x_1,\ldots,x_n\}$ are sampled from a metric space according to a probability measure $\mu$, in the limit as $n\to\infty$.
Unsurprisingly, these results rely on the law of large numbers.
In Section~\ref{sec: convergence deterministic}, we reprove these results under the (simpler) deterministic setting when points are not randomly chosen, and instead we assume that the corresponding finite measures $\mu_n = \frac{1}{n}\sum\limits_{i=1}^{n} \delta_{x_i}$ (determined by $n$ points) converge to $\mu$.
This allows us, in Section~\ref{sec: convergence deterministic arbitrary}, to consider the more general setting where we have convergence of \emph{arbitrary} probability measures $\mu_n\to\mu$.
For example, in what sense do we still have convergence of MDS when each measure $\mu_n$ in the converging sequence $\mu_n\to\mu$ has infinite support?
Finally, in Section~\ref{sec: convergence GW}, we ask about the even more general setting where we have the convergence of arbitrary metric measure spaces $(X_n, d_n, \mu_n)\to(X, d, \mu)$ in the Gromov--Wasserstein distance.

\section{Robustness of MDS with Respect to Perturbations}\label{sec: robustness}

In a series of papers~\cite{sibson1978studies,sibson1979studies,sibson1981studies}, Sibson and his collaborators consider the robustness of multidimensional scaling with respect to perturbations of the underlying distance or dissimilarity matrix as illustrated in Figure~\ref{fig:measure_Sibson}.
In particular,~\cite{sibson1979studies} gives quantitative control over the perturbation of the eigenvalues and vectors determining an MDS embedding in terms of the perturbations of the dissimilarities.
These results build upon the fact that if $\lambda$ and $v$ are a (simple, i.e.\ non-repeated) eigenvalue and eigenvector of an $n\times n$ matrix $B$, then one can control the change in $\lambda$ and $v$ upon a small symmetric perturbation of the entries in $B$.

\begin{figure}[h]
   \includegraphics[width=0.35\textwidth]{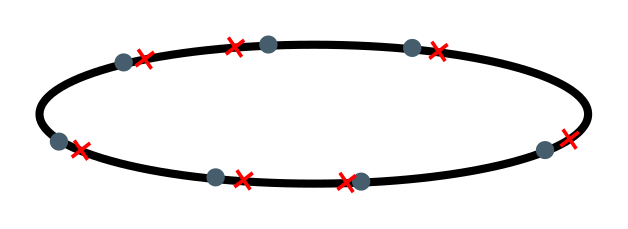}
    
    \caption{Perturbation of the given dissimilarities.}
    \label{fig:measure_Sibson}

\end{figure}

Sibson's perturbation analysis shows that if one is has a converging sequence of $n\times n$ dissimilarity matrices, then the corresponding MDS embeddings of $n$ points into Euclidean space also converge.
In the following sections, we consider the  convergence of MDS when the number of points is not fixed. Indeed, we consider the convergence of MDS when the number of points is finite but tending to infinity, and alternatively also when the number of points is infinite at each stage in a converging sequence of metric measure spaces.

\section{Convergence of MDS by the Law of Large Numbers}\label{sec: convergence random}

In this section, we survey results of~\cite{bengio2004learning,koltchinskii2000random} on the convergence of MDS when more and more points are sampled from a metric space.

Suppose we are given the data set $X_n=\{x_1,
\ldots, x_n\}$ with $x_i \in \R^k$ sampled independent and identically distributed (i.i.d.) from an unknown probability measure $\mu$ on $X$.
Define $D_{ij} = d(x_i, x_j)$ and the corresponding data-dependent kernel $K_n$ as follows
\begin{equation}\label{eq:kernel-Xn}
K_n(x,y)=-\frac{1}{2}\left(d^2(x,y)-\frac{1}{n}\sum_{i=1}^n d^2(x_i,y) -\frac{1}{n}\sum_{i=1}^n d^2(x,x_i) +\frac{1}{n^2} \sum_{i,j=1}^n d^2(x_i,x_j)\right).
\end{equation}
Define the Gram matrix $ M = -\frac{1}{2}H D^{(2)}H$, where $H = I - n^{-1}\mathbf{1}\mathbf{1}^{\top}$, and note that $M_{ij}= K_n(x_i, x_j)$ for $i,j = 1, \ldots, n$.
Assume that the (possibly data-dependent) kernel $K_n$ is bounded (i.e.\ $|K_n(x,y)| < c$ for all $x,y$ in $\R^k$).
We will assume that $K_n$ converges uniformly in its arguments and in probability to its limit $K$ as $ n \to \infty$. This means that for all $\epsilon >0$, 
\[\lim_{n \to \infty} P( \sup_{x,y \in \R^k} |K_n(x,y - K(x,y)| \geq \epsilon) =0. \]
Furthermore, assume $K_n$ is an $L^2$-kernel.
Then associated to $K_n$ is an operator 
$T_{K_n} \colon L^2(X) \to L^2(X)$ defined as
\[[T_{K_n} f](x) = \frac{1}{n}\sum\limits_{i=1}^n K_n(x, x_i)f(x_i). \]
Define $T_K \colon L^2(X) \to L^2(X)$ as
\[[T_K f](x) = \int K(x, s)f(s)\mu(\diff s), \]
where
\[ K(x,y)= \frac{1}{2} \left(-d(x,y)^2 + \int \limits_X  d(w,y)^2 \mu(\diff w) + \int \limits_X d(x,z)^2 \mu(\diff z) - \int_{X \times X} d(w,z)^2 \mu(\diff w \times \diff z) \right)\]
is defined as in Section~\ref{ss:infinite-mds}.
Therefore, we obtain the following eigensystems, 
\[ T_K f_k = \lambda _k f_k \quad \mbox{ and } \quad 
T_{K_n} \phi_{k,n} = \lambda_{k,n} \phi_{k,n}, \] where $(\lambda_k, \phi_k)$ and $(\lambda_{k,n}, \phi_{k,n})$ are the corresponding eigenvalues and eigenfunctions of $T_K$ and $T_{K_n}$ respectively.
Furthermore, when we evaluate $T_{K_n}$ at the data points $x_i\in X_n$, we obtain the following eigensystem for $M$,
\[ M v_k = \ell_k v_k,\]
where $(\ell_k, v_k)$ are the corresponding eigenvalues and eigenvectors.

\begin{lemma}\label{lem: evals-1}~\cite[Proposition 1]{bengio2004learning}
$T_{K_n}$ has in its image $m \leq n$ eigenfunctions of the form \[\phi_{k, n}(x) = \frac{\sqrt n}{\ell_k} \sum\limits_{i=1}^n v^{(i)}_kK_n(x,x_i)\] with corresponding eigenvalues $\lambda_{k,n} = \frac{\ell_k}{n}$ where $v^{(i)}_k$ denotes the $i$th entry of the $k$th eigenvector of $M$ associated with the eigenvalue $\ell_k$.
For $x_i \in X_n$, these functions coincide with the corresponding eigenvectors,    $\phi_{k, n}(x_i) = \sqrt n v^{(i)}_k .$
\end{lemma}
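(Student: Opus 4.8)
The plan is to exploit the fact that $T_{K_n}$ is really an integral operator against the empirical measure $\mu_n=\frac1n\sum_{i=1}^n\delta_{x_i}$, so that its action $[T_{K_n}f](x)=\frac1n\sum_i K_n(x,x_i)f(x_i)$ depends only on the finitely many values $f(x_1),\ldots,f(x_n)$. First I would observe that the image of $T_{K_n}$ is contained in the span of the $n$ functions $x\mapsto K_n(x,x_i)$, hence has dimension at most $n$; this is exactly what forces $T_{K_n}$ to have at most $m\le n$ nonzero eigenvalues and explains the bound in the statement. It also shows that every eigenfunction with nonzero eigenvalue is continuous, being a finite linear combination of the continuous kernel slices $K_n(\cdot,x_i)$, so that the point evaluations appearing in the statement are unambiguous.

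Next I would establish the correspondence between the spectrum of $M$ and that of $T_{K_n}$. Given an eigenfunction $\phi$ with $T_{K_n}\phi=\lambda\phi$ and $\lambda\neq0$, evaluating the eigenvalue equation at each data point $x_j$ gives $\frac1n\sum_i M_{ji}\phi(x_i)=\lambda\phi(x_j)$, that is $M\vec\phi=n\lambda\vec\phi$ where $\vec\phi=(\phi(x_1),\ldots,\phi(x_n))^\top$; thus $\vec\phi$ is an eigenvector of $M$ and $\lambda=\ell_k/n$ for some eigenvalue $\ell_k$ of $M$. For the converse, which is the substance of the lemma, I would take an eigenvector $v_k$ of $M$ with $Mv_k=\ell_k v_k$ and $\ell_k\neq0$, set $\psi(x)=\sum_i v_k^{(i)}K_n(x,x_i)$, and compute directly
\[
[T_{K_n}\psi](x)=\frac1n\sum_j K_n(x,x_j)\sum_i v_k^{(i)}K_n(x_j,x_i)=\frac1n\sum_j K_n(x,x_j)(Mv_k)_j=\frac{\ell_k}{n}\,\psi(x),
\]
using $\sum_i M_{ji}v_k^{(i)}=\ell_k v_k^{(j)}$. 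Hence $\psi$, and therefore the rescaled function $\phi_{k,n}=\frac{\sqrt n}{\ell_k}\psi$, is an eigenfunction with eigenvalue $\lambda_{k,n}=\ell_k/n$, which is precisely the displayed formula.

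Finally I would verify the claimed values on the sample. Evaluating $\phi_{k,n}$ at $x_j$ and again using $Mv_k=\ell_k v_k$ gives
\[
\phi_{k,n}(x_j)=\frac{\sqrt n}{\ell_k}\sum_i v_k^{(i)}K_n(x_j,x_i)=\frac{\sqrt n}{\ell_k}(Mv_k)_j=\sqrt n\,v_k^{(j)},
\]
so the eigenfunctions restrict to the rescaled eigenvectors of $M$ on $X_n$, as asserted; the same computation shows that the factor $\sqrt n/\ell_k$ is exactly what makes $\phi_{k,n}$ have unit norm in $L^2(\mu_n)$ whenever $v_k$ is a unit vector. I do not anticipate a serious obstacle here, since the argument is a direct Nystr\"om-type correspondence between the finite matrix $M$ and the operator $T_{K_n}$; the only point requiring genuine care is the well-definedness of the point evaluations $f(x_i)$ for elements of $L^2(X,\mu)$, which is resolved by restricting attention to the automatically continuous eigenfunctions in the image of $T_{K_n}$ identified in the first step.
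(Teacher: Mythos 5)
Your proof is correct. Note that the paper itself supplies no proof of this lemma: it is stated as a surveyed result, cited directly to \cite[Proposition~1]{bengio2004learning}, so there is no in-paper argument to compare against. Your direct Nystr\"om-type verification --- bounding the rank of the image by $n$, checking that evaluation at the sample points sends eigenfunctions of $T_{K_n}$ to eigenvectors of $M$ with eigenvalue scaled by $n$, and conversely checking that $\psi = \sum_i v_k^{(i)} K_n(\cdot,x_i)$ satisfies $T_{K_n}\psi = \tfrac{\ell_k}{n}\psi$ via $Mv_k = \ell_k v_k$ --- is exactly the standard argument behind the cited proposition, and your closing remarks on non-vanishing of $\psi$ (implicit in $\phi_{k,n}(x_j)=\sqrt{n}\,v_k^{(j)}$), on the $L^2(\mu_n)$ normalization, and on resolving point evaluations by the automatic continuity of functions in the image are the right points of care.
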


Indeed, $M$ has $n$ eigenvalues whereas $T_{K_n}$ has infinitely many eigenvalues, this means that $T_{K_n}$ has at most $n$ nonzero eigenvalues, and that $0$ is an eigenvalue of $T_{K_n}$ with infinite multiplicity.
In order to compare the finitely many eigenvalues of $M$ with the infinite sequence of eigenvalues of $T_{K_n}$, some procedure has to be constructed~\cite{koltchinskii2000random}.

Suppose that the eigenvalues are all non-negative and sorted in non-increasing order, repeated according to their multiplicity.
Thus, we obtain eigenvalue tuples and sequences
\[\lambda (M)= (l_1, \ldots , l_n),\] where $l_1 \geq \ldots \geq l_n$, and
 \[\lambda (T_{K_n}) = (\lambda_1, \lambda_2, \ldots)\] where $\lambda_1 \geq \lambda_2 \geq \ldots$.

To compare the eigenvalues, first embed $\lambda (M)$ into $\ell ^1$ by padding the length-$n$ vector with zeroes, obtaining
\[\lambda (M) = (l_1, \ldots , l_n, 0,0, \ldots). \]
\begin{definition}
The \emph{$\ell^2$-rearrangement distance} between (countably) infinite sequences $x$ and $y$ is defined as
\[\delta_2(x, y) = \inf_{\pi \in \mathcal G(\N)} \sum\limits_{i=1}^{\infty} (x_i - y_{\pi(i)})^2 \]
where $\mathcal G(\N)$ is the set of all bijections on $\N$.
\end{definition}

In this section and the following sections, the eigenvalues are always ordered in non-increasing order by the spectral theorem of self-adjoint operators.
We note that the $\ell^2$-rearrangement distance is simply the $\ell^2$-distance when the eigenvalues are ordered.

\begin{theorem}~\cite[Theorem~3.1]{koltchinskii2000random}\label{thm:koltchinskii}
The ordered spectrum of $T_{K_n}$ converges to the ordered spectrum of $T_{K}$ as $n \to \infty$ with respect to the $\ell^2$-distance, namely
\[\ell^2(\lambda(T_{K_n}), \lambda(T_K)) \to 0 \quad \mbox{a.s.} \] 
\end{theorem}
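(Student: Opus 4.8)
The plan is to combine a Hoffman--Wielandt perturbation inequality for the ordered spectra of self-adjoint Hilbert--Schmidt operators with the strong law of large numbers (SLLN) for V-statistics. I would first isolate the two sources of discrepancy between $T_{K_n}$ and $T_K$: the data-dependent kernel $K_n$ differs from $K$, and the empirical measure $\mu_n=\frac1n\sum_{i=1}^n\delta_{x_i}$ differs from $\mu$. Introducing the intermediate operator $T_{K,n}$ with $[T_{K,n}f](x)=\frac1n\sum_{i=1}^n K(x,x_i)f(x_i)$ (true kernel, empirical measure), the triangle inequality for the $\ell^2$ distance gives
\[\ell^2(\lambda(T_{K_n}),\lambda(T_K)) \le \ell^2(\lambda(T_{K_n}),\lambda(T_{K,n})) + \ell^2(\lambda(T_{K,n}),\lambda(T_K)).\]
For the first term, by Lemma~\ref{lem: evals-1} the nonzero eigenvalues of $T_{K_n}$ and $T_{K,n}$ are those of the Gram matrices $\frac1n(K_n(x_i,x_j))$ and $\frac1n(K(x_i,x_j))$ respectively; the finite Hoffman--Wielandt inequality then bounds
\[\ell^2(\lambda(T_{K_n}),\lambda(T_{K,n}))^2 \le \frac1{n^2}\sum_{i,j=1}^n\bigl(K_n(x_i,x_j)-K(x_i,x_j)\bigr)^2 \le \sup_{x,y}\bigl|K_n(x,y)-K(x,y)\bigr|^2.\]
Because the centering corrections defining $K_n$ are empirical averages of $d^2$ converging (under the boundedness hypothesis, by the SLLN) almost surely and uniformly to the integrals appearing in $K$, this term tends to $0$ almost surely.

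The substance of the theorem is the second term, which is precisely~\cite[Theorem~3.1]{koltchinskii2000random}. The difficulty is structural: $T_{K,n}$ acts on $L^2(\mu_n)$ while $T_K$ acts on $L^2(\mu)$, so one cannot directly form $\|T_{K,n}-T_K\|_{HS}$ and invoke perturbation theory. I would circumvent this with a finite-rank truncation. By the spectral theorem for compact self-adjoint operators (Theorem~\ref{thm:spectral}), write $K(x,s)=\sum_k\lambda_k\phi_k(x)\phi_k(s)$ with $\{\phi_k\}$ orthonormal in $L^2(\mu)$ and real eigenvalues ordered by $|\lambda_1|\ge|\lambda_2|\ge\cdots$, and set $K^{(N)}=\sum_{k=1}^N\lambda_k\,\phi_k\otimes\phi_k$. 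Then $\|K-K^{(N)}\|_{L^2(\mu\otimes\mu)}^2=\sum_{k>N}\lambda_k^2\to0$, which controls $\ell^2(\lambda(T_K),\lambda(T_{K^{(N)}}))$; on the empirical side the same Hoffman--Wielandt bound controls $\ell^2(\lambda(T_{K,n}),\lambda(T_{K^{(N)},n}))$ by $\frac1{n^2}\sum_{i,j}(K-K^{(N)})^2(x_i,x_j)$, which converges almost surely to $\sum_{k>N}\lambda_k^2$ by the SLLN for V-statistics.

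It remains to handle the finite-rank kernel $K^{(N)}$, where the empirical spectrum is explicit. A short computation shows the nonzero eigenvalues of $T_{K^{(N)},n}$ are those of the $N\times N$ matrix $\Lambda_N G_n$, where $\Lambda_N=\mathrm{diag}(\lambda_1,\dots,\lambda_N)$ and $(G_n)_{kl}=\frac1n\sum_{i=1}^n\phi_k(x_i)\phi_l(x_i)$; since $G_n$ is positive semidefinite this matrix has real spectrum. By the SLLN each entry satisfies $(G_n)_{kl}\to\langle\phi_k,\phi_l\rangle_{L^2(\mu)}=\delta_{kl}$ almost surely, so $G_n\to I_N$ and hence the spectrum of $\Lambda_N G_n$ converges almost surely to $\{\lambda_1,\dots,\lambda_N\}$, which is exactly the spectrum of $T_{K^{(N)}}$. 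Chaining the four estimates through the triangle inequality --- first fix $N$ large so both truncation tails are uniformly small, then let $n\to\infty$ --- yields $\ell^2(\lambda(T_{K,n}),\lambda(T_K))\to0$ almost surely, and combined with the first term completes the proof.

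The step I expect to be the main obstacle is exactly the mismatch of underlying Hilbert spaces in the second term. Making the truncation argument rigorous requires controlling three quantities simultaneously --- the operator-side tail $\sum_{k>N}\lambda_k^2$, the empirical V-statistic tail $\frac1{n^2}\sum_{i,j}(K-K^{(N)})^2(x_i,x_j)$, and the Gram-matrix convergence $G_n\to I_N$ --- and verifying that all the almost-sure statements hold on a single event of full measure. The latter is possible because only countably many V-statistics (indexed by the pairs $(k,l)$ and by the truncation level $N$) must converge, so their exceptional null sets may be unioned.
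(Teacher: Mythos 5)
The paper does not actually prove Theorem~\ref{thm:koltchinskii}: it imports the statement from \cite[Theorem~3.1]{koltchinskii2000random}, remarking only that it corresponds to equation (3.13) in that paper's proof. So your proposal must be judged against the cited source, and it is, in essence, a correct reconstruction of the Koltchinskii--Gin\'e argument: their proof likewise runs through a finite-rank truncation $K^{(N)}=\sum_{k\le N}\lambda_k\,\phi_k\otimes\phi_k$, a Hoffman--Wielandt-type control of the rearrangement distance, and the strong law of large numbers applied to the Gram matrices $(G_n)_{kl}=\frac{1}{n}\sum_i\phi_k(x_i)\phi_l(x_i)\to\delta_{kl}$, which identifies the limiting spectrum of the truncated empirical operator with $\{\lambda_1,\dots,\lambda_N\}$. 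Your device of applying Hoffman--Wielandt only to $n\times n$ matrices (via Lemma~\ref{lem: evals-1}, which identifies the nonzero spectrum of each empirical operator with that of its Gram matrix) while handling the operator side by exact spectral truncation is a clean way to avoid the infinite-dimensional Hoffman--Wielandt inequality that Koltchinskii and Gin\'e prove separately. One scope remark: \cite[Theorem~3.1]{koltchinskii2000random} concerns the \emph{fixed} kernel $K$ evaluated at the sample, i.e.\ exactly your second term $\ell^2(\lambda(T_{K,n}),\lambda(T_K))$; the passage from $K$ to the data-dependent centered kernel $K_n$ (your first term) is the part that goes beyond the citation, and the paper covers it by hypothesis (uniform convergence of $K_n$ to $K$ is \emph{assumed} in Section~\ref{sec: convergence random}) rather than by proof.

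That first term is also where your argument has its only real gap. You assert that $\sup_{x,y}|K_n(x,y)-K(x,y)|\to 0$ almost surely ``by the SLLN.'' The plain SLLN gives, for each \emph{fixed} $y$, almost sure convergence of $\frac{1}{n}\sum_i d^2(x_i,y)$ to $\int_X d^2(w,y)\,\mu(\diff w)$, but a supremum over uncountably many $y$ of almost-sure statements is not automatic. You need a uniform law of large numbers: since $X$ is compact and $|d^2(x,y)-d^2(x,y')|\le 2\,\mathrm{diam}(X)\,d(y,y')$, the class $\{d^2(\cdot,y) : y\in X\}$ is uniformly bounded and uniformly Lipschitz, so a finite $\delta$-net of $X$ together with the SLLN at the net points yields the uniform almost-sure convergence; the same applies to the doubly-averaged centering term. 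With that repaired, and with the standard observations that the diagonal terms of your V-statistics contribute only $O(1/n)$ because all kernels involved are bounded --- boundedness of $K^{(N)}$ uses that eigenfunctions with nonzero eigenvalue are continuous on the compact space $X$, which follows from continuity of $K$ alone and does not require Mercer's theorem (note $K$ need not be positive semi-definite here) --- your chaining argument, including the countable union of null sets over truncation levels and tolerances, is sound.
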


The theorem stated above is in fact only one part, namely equation (3.13), in the proof of ~\cite[Theorem~3.1]{koltchinskii2000random}.
We also remark that~\cite{koltchinskii2000random} uses fairly different notation for the various operators than what we have used here.

\begin{theorem}~\cite[Proposition 2]{bengio2004learning}
\label{thm: bengioConvergence}
If $K_n$ converges uniformly in its arguments and in probability, with the eigendecomposition of the Gram matrix converging, and if the eigenfunctions $\phi_{k, n}(x)$ of $T_{K_n}$ associated with non-zero eigenvalues converge uniformly in probability, then their limit are the corresponding eigenfunctions of $T_K$.
\end{theorem}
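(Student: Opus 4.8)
The plan is to pass to the limit directly in the defining eigenrelation $T_{K_n}\phi_{k,n}=\lambda_{k,n}\phi_{k,n}$. Writing the empirical operator from \eqref{eq:kernel-Xn} as an integral against the empirical measure $\mu_n=\frac{1}{n}\sum_{i=1}^n\delta_{x_i}$, this relation becomes
\[
\int_X K_n(x,s)\phi_{k,n}(s)\,\mu_n(\diff s)=\lambda_{k,n}\,\phi_{k,n}(x)\qquad\text{for all }x\in X.
\]
Let $\phi_k$ denote the uniform-in-probability limit of $\phi_{k,n}$ supplied by the hypothesis, and let $\lambda_k=\lim_{n}\lambda_{k,n}$. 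First I would show that, for each fixed $x$, the left side converges in probability to $T_K\phi_k(x)=\int_X K(x,s)\phi_k(s)\,\mu(\diff s)$ and the right side to $\lambda_k\phi_k(x)$; equating the two limits then gives $T_K\phi_k=\lambda_k\phi_k$, which is exactly the assertion that $\phi_k$ is the eigenfunction of $T_K$ corresponding to $\lambda_k$.

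To handle the left side, I would fix $x$ and split the difference between $\int K_n(x,s)\phi_{k,n}(s)\,\mu_n(\diff s)$ and $\int K(x,s)\phi_k(s)\,\mu(\diff s)$ as
\[
\int\!\big(K_n(x,s)\phi_{k,n}(s)-K(x,s)\phi_k(s)\big)\,\mu_n(\diff s)+\Big(\int K(x,s)\phi_k(s)\,\mu_n(\diff s)-\int K(x,s)\phi_k(s)\,\mu(\diff s)\Big).
\]
Adding and subtracting $K(x,s)\phi_{k,n}(s)$, the first integrand is bounded in $s$ by $\sup_s|K_n(x,s)-K(x,s)|\cdot\|\phi_{k,n}\|_\infty+\|K\|_\infty\cdot\|\phi_{k,n}-\phi_k\|_\infty$. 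Since $K_n$ is uniformly bounded, $\phi_{k,n}$ is eventually uniformly bounded (being a uniform limit of $\phi_k$), and both $K_n\to K$ and $\phi_{k,n}\to\phi_k$ hold uniformly in probability, this sup-norm tends to $0$ in probability; because $\mu_n$ is a probability measure, the whole first integral is dominated by that sup-norm and hence vanishes. The second term has the \emph{fixed} bounded measurable integrand $g(s)=K(x,s)\phi_k(s)$ and equals $\int g\,d\mu_n-\int g\,d\mu$, so the strong law of large numbers (the $x_i$ being i.i.d.\ $\mu$) gives $\int g\,d\mu_n\to\int g\,d\mu$ almost surely. Combining the two pieces yields convergence of the left side to $T_K\phi_k(x)$.

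For the right side I need $\lambda_{k,n}\to\lambda_k$. By Lemma~\ref{lem: evals-1} we have $\lambda_{k,n}=\ell_k/n$ with $\ell_k$ the $k$th eigenvalue of the Gram matrix $M$, so the hypothesis that the eigendecomposition of $M$ converges, or equivalently Theorem~\ref{thm:koltchinskii} giving $\ell^2$-convergence of the ordered spectra of $T_{K_n}$ to that of $T_K$ (which forces componentwise convergence at a simple eigenvalue), delivers $\lambda_{k,n}\to\lambda_k$. Together with $\phi_{k,n}(x)\to\phi_k(x)$ this gives $\lambda_{k,n}\phi_{k,n}(x)\to\lambda_k\phi_k(x)$, and equating limits produces $T_K\phi_k=\lambda_k\phi_k$ pointwise, hence in $L^2(X,\mu)$.

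The hard part will be controlling the \emph{simultaneous} $n$-dependence of the kernel $K_n$, the eigenfunction $\phi_{k,n}$, and the sampling measure $\mu_n$, which blocks any naive interchange of limit and integral; the triangle-inequality split above is chosen precisely to decouple them, leaving one sup-norm factor (handled by uniform-in-probability convergence plus boundedness) and one genuine law-of-large-numbers term for a single fixed integrand. A secondary obstacle is reconciling the different modes of convergence — uniform in probability for $K_n$ and $\phi_{k,n}$ versus almost sure from the law of large numbers — which I would resolve by passing to a subsequence along which all convergences hold simultaneously, or by working throughout in probability. Finally, the argument as sketched is pointwise in $x$; if uniform control in $x$ were desired one would need a Glivenko--Cantelli or equicontinuity hypothesis on the family $\{K(x,\cdot)\phi_k:x\in X\}$, but for identifying the $L^2$-eigenfunction the pointwise conclusion already suffices.
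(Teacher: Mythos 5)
Your proof is correct, and it is essentially the same argument the paper itself uses --- with the caveat that the paper never actually proves Theorem~\ref{thm: bengioConvergence} (it is only cited); what the paper proves is the deterministic analogue, Proposition~\ref{prop: wasserstein convergence finite}, where i.i.d.\ sampling is replaced by total-variation convergence $\mu_n\to\mu$. Both you and the paper pass to the limit in the eigenrelation and use a triangle-inequality decomposition whose entire purpose is to decouple the simultaneous $n$-dependence of $K_n$, $\phi_{k,n}$, and $\mu_n$: your two-term split (a sup-norm-controlled integrand against $\mu_n$, plus a fixed-integrand term $\int g\,d\mu_n-\int g\,d\mu$) is a compressed version of the paper's four-term $A_n+B_n+C_n+D_n$ split. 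The only structural differences are that you keep $\lambda_{k,n}$ on the right-hand side of the eigenrelation while the paper divides through by it, and that your fixed-integrand term is handled by the law of large numbers (appropriate for the i.i.d.\ statement as given) where the paper invokes total-variation convergence. One small repair is needed: your parenthetical justification that $\phi_{k,n}$ is ``eventually uniformly bounded (being a uniform limit of $\phi_k$)'' is circular as written, since uniform convergence yields a uniform-in-$n$ bound only once you know some $\phi_{k,n}$ (or the limit) is bounded. The paper patches exactly this point with a separate lemma (Lemma~\ref{lem: bounded eigenfunctions}), deriving boundedness from the eigenrelation and the bound on $K_n$; in your i.i.d.\ setting it is even easier, since Lemma~\ref{lem: evals-1} expresses $\phi_{k,n}$ as the finite sum $\tfrac{\sqrt n}{\ell_k}\sum_i v^{(i)}_k K_n(\cdot,x_i)$, which is bounded because $K_n$ is. With that one line added, your argument is complete.
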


Under the given hypotheses, we are able to formulate a specific set of eigenvalues and eigenfunctions $T_K$.
As illustrated below, the choice of the eigenfunctions $\phi_{k,n}$ of $T_{K_n}$ was made to extend the finite MDS embedding to the infinite MDS embedding described in Section~\ref{ss:infinite-mds}.
However, there are other possible choices of eigenfunctions $\phi_{k,n}$. 

Consider the infinite MDS map $f_m\colon X\to \R^m$ defined in Section~\ref{ss:infinite-mds} as
\[f_m(x)=\left(\sqrt{ \lambda_1} \phi_{1}(x), \sqrt{ \lambda_2} \phi_{2}(x), \ldots, \sqrt{\lambda_m} \phi_{m}(x)\right)\]
for all $x\in X$,
 with kernel $K = K_B$ and the associated operator $T_K = T_{K_B}$ (with eigensystem ($\lambda_k, \phi_k$)).
Evaluating $f_m(x)$ at $x_i \in X_n$, we obtain the following finite embedding:
\begin{align*}
f_m(x_i) & =\left(\sqrt{\lambda_1} \phi_{1} (x_i), \sqrt{ \lambda_2} \phi_{2}(x_i), \ldots, \sqrt{ \lambda_m} \phi_{m}(x_i)\right)\\
& = \left((\lim _{n \to \infty} \sqrt{ \lambda_{1,n}} \cdot \phi_{1,n})(x_i), (\lim _{n \to \infty} \sqrt{\lambda_{2,n}}  \cdot \phi_{2,n})(x_i), \ldots, (\lim _{n \to \infty}\sqrt{ \lambda_{m,n}}  \cdot \phi_{m,n})(x_i)\right)\\
& = \left(\left(\lim _{n \to \infty} \sqrt{\frac{\ell_1}{n}} \cdot \sqrt n v^{(i)}_1 \right), \left(\lim _{n \to \infty} \sqrt{ \frac{\ell_2}{n}}  \cdot \sqrt n v^{(i)}_2 \right), \ldots, \left(\lim _{n \to \infty} \sqrt{ \frac{\ell_m}{n}}  \cdot \sqrt n v^{(i)}_m  \right) \right)\\
& = \left(\lim_{n \to \infty}(\sqrt{ \ell_1} v^{(i)}_1),\lim_{n \to \infty}(\sqrt{ \ell_2} v^{(i)}_2), \ldots, \lim_{n \to \infty}(\sqrt{ \ell_m} v^{(i)}_m) \right),
\end{align*}
which is the finite MDS embedding of $X_n$ into $\R^m$, where the eigensystem of the $n \times n$ inner-product matrix $B$ is denoted by $(\ell_k(n), v_k(n))$. 
The second equality above is from Theorem~\ref{thm: bengioConvergence}, and the third equality above is from Lemma~\ref{lem: evals-1}.

\section{Convergence of MDS for Finite Measures}\label{sec: convergence deterministic}

Though we gave no proofs in the above section, we do so now in the simpler deterministic case when points are not drawn from $X$ at random, but instead we assume that $\mu_n=\frac{1}{n}\sum_{i=1}^n\delta_{x_i}$ is sequence of measures with the support of $\mu_n$ consisting of $n$ points $\{x_1,\ldots, x_n\}\subseteq X$, and we assume that $\mu_n$ converges to some underlying probability distribution $\mu$ on $X$.
Our reason for working deterministically instead of randomly here is so that in Section~\ref{sec: convergence deterministic arbitrary}, we may consider the convergence of MDS in the more general setting when $\mu_n\to\mu$ are arbitrary probability measures; for example each $\mu_n$ may have infinite support. 

Figure~\ref{fig:sub:measure_Benjio} illustrates the case when the points are sampled i.i.d as discussed Section~\ref{sec: convergence random}, in contrast to Figure~\ref{fig:sub:measure_deterministic} which illustrates the case when the points are sampled in a manner that guarantees convergence of the measures $\mu_n$ to $\mu$.

\begin{figure}[h]

\subfloat[Points sampled i.i.d..]{\label{fig:sub:measure_Benjio}\includegraphics[width=0.35\textwidth]{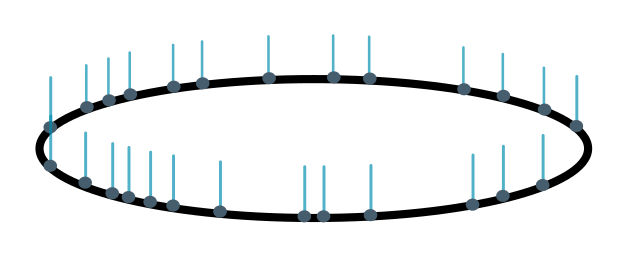}}
 \subfloat[Points sampled deterministically.]{\label{fig:sub:measure_deterministic}\includegraphics[width=0.35\textwidth]{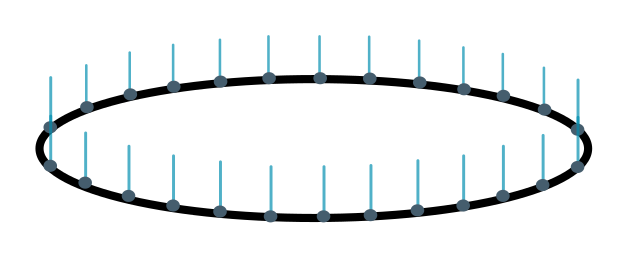}}

\caption{Illustration of the different notions of convergence of measures.}
\label{fig: deterministic_finite}

\end{figure}

\subsection{Preliminaries}

We begin by giving some background on kernels, convergence in total variation, and H\"{o}lder's Inequality.

\begin{definition}\label{def:metric-to-kernel}
Given a bounded metric measure space $(X, d,\mu)$, its \emph{associated kernel} $K\colon X\times X\to \R$ is 
\[ K(x,y)= \frac{1}{2} \left(-d(x,y)^2 + \int \limits_X  d(w,y)^2 \mu(\diff w) + \int \limits_X d(x,z)^2 \mu(\diff z) - \int_{X \times X} d(w,z)^2 \mu(\diff w \times \diff z) \right),\]
and its \emph{associated linear operator} $T_K\colon L^2(X, \mu) \to L^2(X, \mu)$ is defined via
\[[T_K\phi](x) = \int \limits_X K(x,y) \phi(y) \mu(\diff y).\]
\end{definition}

\begin{definition} [Total-variation convergence of measures]
Let $(X,{\mathcal {F}}$) be a measurable space. The total variation distance between two (positive) measures $\mu$ and $\nu$ is then given by

\[ \left\|\mu -\nu \right\|_{\text{TV}}=\sup _{f}\left\{\int _{X}f\,d\mu -\int _{X}f\,d\nu \right\}.\]  

The supremum is taken over $f$ ranging over the set of all measurable functions from $X$ to $[-1, 1]$. In our definition of metric measure spaces, we consider Borel probability measure on the space $X$.
\end{definition} 
Indeed, convergence of measures in total-variation implies convergence of integrals against bounded measurable functions, and the convergence is uniform over all functions bounded by any fixed constant.

\begin{theorem}[H\"{o}lder's Inequality]\label{thm: Holder}
Let $(S, \Sigma, \mu)$ be a measure space and let $p, q \in [1, \infty]$ with $\frac{1}{p} + \frac{1}{q} = 1$. Then, for all measurable real- or complex-valued functions $f$ and $g$ on S,
\[\|fg\|_{1}\leq \|f\|_{p}\|g\|_{q}.\]
\end{theorem}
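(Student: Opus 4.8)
The plan is to reduce the entire statement to a single pointwise inequality---\emph{Young's inequality for products}---and then integrate. First I would dispose of the degenerate and endpoint cases. If $\|f\|_p=0$ then $f=0$ $\mu$-almost everywhere, so $fg=0$ a.e.\ and both sides vanish; the same holds if $\|g\|_q=0$. If instead $\|f\|_p=\infty$ or $\|g\|_q=\infty$ while neither factor is zero, the right-hand side is $+\infty$ and there is nothing to prove. For the endpoint pair $p=1$, $q=\infty$ (and symmetrically $p=\infty$, $q=1$), I would simply observe that $|f(x)g(x)|\le \|g\|_\infty\,|f(x)|$ for a.e.\ $x$ and integrate. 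This leaves the principal case $1<p,q<\infty$ with $0<\|f\|_p,\|g\|_q<\infty$.

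The key lemma is Young's inequality: for all $a,b\ge 0$ and conjugate exponents $p,q$,
\[ ab \le \frac{a^p}{p} + \frac{b^q}{q}. \]
I would prove this from the convexity of the exponential, equivalently the concavity of the logarithm. In the nontrivial case $a,b>0$, write $a^p=e^{s}$ and $b^q=e^{t}$, so that $a=e^{s/p}$ and $b=e^{t/q}$. Since $\tfrac1p+\tfrac1q=1$, concavity of $\log$ gives $\tfrac1p s + \tfrac1q t \le \log\!\bigl(\tfrac1p e^{s}+\tfrac1q e^{t}\bigr)$, and exponentiating yields $ab = e^{s/p+t/q}\le \tfrac1p e^{s}+\tfrac1q e^{t} = \tfrac{a^p}{p}+\tfrac{b^q}{q}$; the boundary case $a=0$ or $b=0$ is immediate.

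To finish, I would use the standard homogeneity argument. By replacing $f$ with $f/\|f\|_p$ and $g$ with $g/\|g\|_q$, it suffices to treat $\|f\|_p=\|g\|_q=1$. Applying Young's inequality pointwise with $a=|f(x)|$ and $b=|g(x)|$ and integrating against $\mu$,
\[ \|fg\|_1 = \int_S |f|\,|g|\,d\mu \le \frac1p\int_S |f|^p\,d\mu + \frac1q\int_S |g|^q\,d\mu = \frac1p+\frac1q = 1 = \|f\|_p\,\|g\|_q, \]
and undoing the normalization recovers the general inequality. The only genuinely substantive step is Young's inequality, so the ``main obstacle'' is just invoking convexity correctly; the rest is the routine case analysis together with the scaling reduction.
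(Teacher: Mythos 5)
Your proof is correct and complete: the disposal of the degenerate cases (vanishing or infinite norms), the endpoint pair $p=1$, $q=\infty$, the derivation of Young's inequality $ab\le \tfrac{a^p}{p}+\tfrac{b^q}{q}$ from concavity of the logarithm, and the normalization-plus-integration step are all sound; this is the standard textbook argument. For comparison, the paper itself offers no proof of this statement at all: H\"{o}lder's inequality appears in the preliminaries as a quoted classical fact, and it is invoked only once, in the special case $p=q=2$ (the Cauchy--Schwarz inequality), to prove that $L^2(X)\subseteq L^1(X)$ when $\mu(X)<\infty$ (Lemma~\ref{lem: L1L2}). So you have supplied a genuine proof where the paper supplies none, and there is no gap to flag. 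The only point worth a sentence of care is the degenerate case $\|g\|_q=0$ with $\|f\|_p=\infty$ (or symmetrically): there the right-hand side is the product $\infty\cdot 0$, and the claim that ``both sides vanish'' implicitly uses the measure-theoretic convention $0\cdot\infty=0$; stating that convention explicitly would make the case analysis airtight, but it is a convention issue rather than a flaw in the argument.
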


\begin{lemma}\label{lem: L1L2}
For a measure space $X$ with finite measure ($\mu (X) < \infty)$, $L^2(X)$ is contained in $L^1(X)$.
\end{lemma}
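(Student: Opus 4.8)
The plan is to deduce the inclusion directly from H\"older's Inequality (Theorem~\ref{thm: Holder}) applied with the conjugate exponents $p=q=2$. The key observation is that on a finite measure space the constant function is square-integrable, so it can serve as the second factor in a H\"older estimate.

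Concretely, let $f \in L^2(X)$ be arbitrary; I must show $f \in L^1(X)$, i.e.\ that $\int_X |f|\, d\mu < \infty$. First I would write $|f| = |f| \cdot \mathbf{1}$, where $\mathbf{1}$ denotes the constant function equal to $1$ on $X$. Applying Theorem~\ref{thm: Holder} with $p = q = 2$ to the functions $|f|$ and $\mathbf{1}$ yields
\[
\|f\|_1 = \int_X |f|\, d\mu = \big\| \, |f| \cdot \mathbf{1}\, \big\|_1 \leq \big\| \, |f| \, \big\|_2 \, \|\mathbf{1}\|_2 = \|f\|_2 \, \left(\int_X 1\, d\mu\right)^{1/2} = \|f\|_2\, \big(\mu(X)\big)^{1/2}.
\]

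It then remains only to observe that the right-hand side is finite: $\|f\|_2 < \infty$ because $f \in L^2(X)$ by hypothesis, and $\big(\mu(X)\big)^{1/2} < \infty$ precisely because $X$ is assumed to have finite measure. Hence $\|f\|_1 < \infty$, so $f \in L^1(X)$, establishing $L^2(X) \subseteq L^1(X)$. I do not anticipate any serious obstacle here; the only point requiring mild care is verifying that the constant function $\mathbf{1}$ genuinely lies in $L^2(X)$ so that H\"older's Inequality applies with a finite factor $\|\mathbf{1}\|_2 = \sqrt{\mu(X)}$, and this is exactly where the finiteness of $\mu(X)$ is used (and where the result would fail on an infinite measure space).
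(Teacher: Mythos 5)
Your proof is correct and matches the paper's argument exactly: both write $|f| = 1\cdot|f|$ and apply H\"older's Inequality (Theorem~\ref{thm: Holder}) with $p=q=2$, using $\|\mathbf{1}\|_2 = \sqrt{\mu(X)} < \infty$ to conclude $\|f\|_1 \leq \|f\|_2\,\sqrt{\mu(X)} < \infty$. No differences worth noting.
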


\begin{proof}
Using the Schwarz' inequality (H\"{o}lder's inequality for $p,q =2$), we have the following:
\[\int\limits_X | f(x)| \mu (\diff x) = \int\limits_X 1 \cdot| f(x)| \mu (\diff x) \leq \|1\|_2 \| f\|_2 < \infty, \]
since $\int\limits_X 1 \mu (\diff x) = \mu(X) < \infty$. Thus, $L^2(X) \subseteq L^1(X).$
\end{proof}

\subsection{Convergence of MDS for Finite Measures}

Let $X$ be a bounded metric space, and let $\mu_n=\frac{1}{n}\sum_{i=1}^n\delta_{x_i}$ for $n\ge 1$ be a sequence of averages of Dirac deltas with $\{x_1,\ldots,x_n\}\subseteq X$, and with $\mu_n$ converging to $\mu$ in total variation as $n \to \infty$.
Each $\mu_n$ has finite support whereas $\mu$ may have infinite support.
Let $K_n$ and $T_{K_n}$ be the kernel and operator associated to $(X, d, \mu_n)$, and let $K$ and $T_K$ be the kernel and operator associated to $(X,d,\mu)$.
Let $\lambda_{k,n}$ denote the $k$th eigenvalue of $T_{K_n}$ with associated eigenfunction $\phi_{k,n}$. 
Furthermore, let $\lambda_k$ denote the $k$th eigenvalue of $T_K$ with associated eigenfunction $f_{k}$.

Following the proofs of~\cite{bengio2004learning,koltchinskii2000random}, we will show that the MDS embeddings of the metric measure space $(X, d, \mu_n)$ converge to the MDS embedding of $(X, d, \mu)$ in this deterministic setting.

By definition of $K$ and $K_n$, they are both bounded since $d$ is a bounded $L^2$-kernel with respect to the measures $\mu$ and $\mu_n$ for all $n$.
It follows from the total variation convergence of $\mu_n \to \mu$ that $\|K_n - K\|_\infty\to 0$. 

We use the result from~\cite[Theorem~3.1]{koltchinskii2000random}, where instead of studying finite spaces $X_n$, we study the possibly infinite space $X$ equipped with a measure $\mu_n$ of finite support. Since $\mu_n$ has finite support, Theorem~\ref{thm:koltchinskii} still holds under the assumptions of this section.

Suppose for each $k$, the eigenfunction $\phi_{k,n}$ of $T_{K_n}$ converges uniformly to some function $\phi_{k,\infty}$ as $n \to \infty$. In Proposition~\ref{prop: wasserstein convergence finite}, we show that $\phi_{k,\infty}$ are the eigenfunctions of $T_K$.

\begin{lemma}\label{lem: bounded eigenfunctions}
$\phi_{k,\infty}$ is bounded.
\end{lemma}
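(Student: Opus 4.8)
The plan is to reduce the boundedness of the limit function to the boundedness of a single eigenfunction $\phi_{k,n}$ for one fixed (large) $n$, and then to bound that eigenfunction explicitly using its defining eigenvalue equation. First I would invoke the hypothesis that $\phi_{k,n}\to\phi_{k,\infty}$ uniformly: this gives some $N$ with $\sup_{x\in X}|\phi_{k,N}(x)-\phi_{k,\infty}(x)|\le 1$, so that $\|\phi_{k,\infty}\|_\infty\le \|\phi_{k,N}\|_\infty+1$. Hence it suffices to show that $\phi_{k,N}$ is bounded, and in fact that each $\phi_{k,n}$ (for an eigenvalue bounded away from $0$) is bounded.

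The key computation uses that $\mu_n$ has finite support, so the operator acts as $[T_{K_n}\phi](x)=\frac{1}{n}\sum_{i=1}^n K_n(x,x_i)\phi(x_i)$. Writing out the eigenfunction equation $\lambda_{k,n}\phi_{k,n}=T_{K_n}\phi_{k,n}$ and assuming $\lambda_{k,n}\neq 0$, I would estimate
\[
|\phi_{k,n}(x)|=\frac{1}{|\lambda_{k,n}|}\left|\int_X K_n(x,y)\phi_{k,n}(y)\,\mu_n(\diff y)\right|\le \frac{c}{|\lambda_{k,n}|}\int_X|\phi_{k,n}(y)|\,\mu_n(\diff y),
\]
where $c$ is the uniform bound on $K_n$ (recall $|K_n(x,y)|<c$, as noted at the start of this subsection). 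Since $\phi_{k,n}$ is a unit-norm eigenfunction, $\|\phi_{k,n}\|_{L^2(X,\mu_n)}=1$, and because $\mu_n$ is a probability measure, Cauchy--Schwarz (equivalently H\"older's inequality, Theorem~\ref{thm: Holder}, with $p=q=2$, as in Lemma~\ref{lem: L1L2}) gives $\int_X|\phi_{k,n}|\,\mu_n(\diff y)\le \|\phi_{k,n}\|_{L^2(\mu_n)}=1$. Therefore $|\phi_{k,n}(x)|\le c/|\lambda_{k,n}|$ uniformly in $x$.

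To finish, I would use that the eigenvalues converge, $\lambda_{k,n}\to\lambda_k$, by Theorem~\ref{thm:koltchinskii}. Assuming $\lambda_k\neq 0$ (we are working with eigenfunctions associated with nonzero eigenvalues, as in Theorem~\ref{thm: bengioConvergence}), there is some $N$ with $|\lambda_{k,N}|\ge \tfrac{1}{2}|\lambda_k|>0$, so $\|\phi_{k,N}\|_\infty\le 2c/|\lambda_k|$, and combining with the first paragraph yields $\|\phi_{k,\infty}\|_\infty\le 2c/|\lambda_k|+1$. The main thing to watch is the dependence of the bound on $1/|\lambda_{k,n}|$: the argument genuinely requires the eigenvalue to be bounded away from zero, which is exactly why one restricts to nonzero eigenvalues and why the convergence $\lambda_{k,n}\to\lambda_k\neq 0$ is needed. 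Everything else is a routine application of Cauchy--Schwarz together with the normalization of the eigenfunctions and the uniform bound on the kernels.
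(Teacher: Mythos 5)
Your proof is correct and follows essentially the same route as the paper: bound $|\phi_{k,n}(x)|$ via the eigenvalue equation and the uniform bound $c$ on $K_n$, control $\int_X|\phi_{k,n}|\,\mu_n(\diff y)$ by Cauchy--Schwarz (the content of Lemma~\ref{lem: L1L2}), and transfer boundedness to $\phi_{k,\infty}$ using the uniform convergence $\|\phi_{k,n}-\phi_{k,\infty}\|_\infty\to 0$ and the convergence of eigenvalues. Your version is in fact slightly more explicit than the paper's, which ends with a terse deduction, whereas you make the reduction to a single fixed $N$ and the resulting quantitative bound $\|\phi_{k,\infty}\|_\infty\le 2c/|\lambda_k|+1$ precise.
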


\begin{proof}
We first show that $\phi_{k,n}(x)$ is bounded. Indeed,
\begin{align*}
\left | \phi_{k,n}(x) \right |= \left |\frac{1}{\lambda_{k,n}}\int\limits_X K_n(x,y) \phi_{k,n}(y)  \mu_n(\diff y) \right| & \leq  \frac{1}{\left | \lambda_{k,n} \right |}  \int\limits_X \left | K_n(x,y) \right | \left | \phi_{k,n}(y) \right | \mu_n(\diff y) \\
& \leq  \frac{c}{\left | \lambda_{k,n} \right |}  \int\limits_X \left | \phi_{k,n}(y) \right |  \mu_n(\diff y). \\
\end{align*}
The second inequality follows from the fact that $K_n$ is bounded by some constant $c$. Furthermore, $\phi_{k,n}(x) \in L^2(X)$ and $\mu_n (X) = 1 < \infty$. 
It follows from Lemma~\ref{lem: L1L2} that $\phi_{k,n}(x) \in L^1(X)$, i.e.\ that $ \int\limits_X \left | \phi_{k,n}(y) \right |  \mu_n(\diff y) < \infty$.
Furthermore, knowing that $\|\phi_{k,n}-\phi_{k,\infty}\|_\infty\to 0$ and $\ell^2(\lambda(T_{ K_n}), \lambda(T_K)) \to 0$,  we deduce that $\phi_{k,\infty}$ is bounded.
\end{proof}

\begin{proposition}\label{prop: wasserstein convergence finite}
Suppose $\mu_n=\frac{1}{n}\sum_{x\in X_n}\delta_x$ converges to $\mu$ in total variation.
If the eigenfunctions $\phi_{k,n}$ of $T_{K_n}$ converge uniformly to $\phi _{k, \infty}$ as $n \to \infty$, then their limit are the corresponding eigenfunctions of $T_K$.
\end{proposition}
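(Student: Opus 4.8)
The plan is to pass to the limit in the eigenvalue equation for $T_{K_n}$. For each fixed $x\in X$, the eigenfunction $\phi_{k,n}$ satisfies
\[\lambda_{k,n}\,\phi_{k,n}(x)=[T_{K_n}\phi_{k,n}](x)=\int_X K_n(x,y)\,\phi_{k,n}(y)\,\mu_n(\diff y),\]
and I want to show that the left and right sides converge, respectively, to $\lambda_k\,\phi_{k,\infty}(x)$ and to $[T_K\phi_{k,\infty}](x)$. Together these identify $\phi_{k,\infty}$ as an eigenfunction of $T_K$ with eigenvalue $\lambda_k$, which is exactly the claim.

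The left-hand side is the easy half: by hypothesis $\phi_{k,n}(x)\to\phi_{k,\infty}(x)$ pointwise (indeed uniformly), and the ordered-eigenvalue convergence $\ell^2(\lambda(T_{K_n}),\lambda(T_K))\to0$ furnished by Theorem~\ref{thm:koltchinskii} forces each coordinate $\lambda_{k,n}\to\lambda_k$; hence $\lambda_{k,n}\,\phi_{k,n}(x)\to\lambda_k\,\phi_{k,\infty}(x)$. The real work is the right-hand side, where the kernel, the eigenfunction, and the measure of integration all vary at once. Fixing $x$, I would write $g_n(y)=K_n(x,y)\phi_{k,n}(y)$ and $g(y)=K(x,y)\phi_{k,\infty}(y)$ and split
\[\left|\int_X g_n\,\mu_n(\diff y)-\int_X g\,\mu(\diff y)\right|\le\left|\int_X (g_n-g)\,\mu_n(\diff y)\right|+\left|\int_X g\,\mu_n(\diff y)-\int_X g\,\mu(\diff y)\right|.\]
For the first term I bound $\|g_n-g\|_\infty\le\|K_n-K\|_\infty\,\|\phi_{k,n}\|_\infty+\|K\|_\infty\,\|\phi_{k,n}-\phi_{k,\infty}\|_\infty$, which tends to $0$ because $\|K_n-K\|_\infty\to0$ (from the total-variation convergence $\mu_n\to\mu$), $K$ is a bounded kernel, $\phi_{k,n}\to\phi_{k,\infty}$ uniformly, and $\|\phi_{k,n}\|_\infty$ stays bounded by Lemma~\ref{lem: bounded eigenfunctions}; since each $\mu_n$ is a probability measure, $|\int_X(g_n-g)\,\mu_n(\diff y)|\le\|g_n-g\|_\infty\to0$. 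For the second term the integrand $g$ is now a \emph{fixed} bounded measurable function, so total-variation convergence gives $\int_X g\,\mu_n(\diff y)\to\int_X g\,\mu(\diff y)$ directly.

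I expect the main obstacle to be precisely the order of these two reductions: total-variation convergence only controls integrals of a \emph{fixed} bounded function against $\mu_n-\mu$, so one cannot change the measure while the integrand is still moving. The decomposition above resolves this by first replacing the varying integrand $g_n$ by its uniform limit $g$ at the cost of a $\|g_n-g\|_\infty$ term (harmless because $\mu_n$ has total mass $1$), and only afterward swapping $\mu_n$ for $\mu$. Combining the two halves yields $\lambda_k\,\phi_{k,\infty}(x)=[T_K\phi_{k,\infty}](x)$ for every $x$, as desired. If one additionally wants to confirm that $\phi_{k,\infty}$ is a genuine (nonzero) eigenfunction rather than the trivial one, the same splitting applied to the normalization $\int_X\phi_{k,n}^2\,\mu_n(\diff y)=1$ shows $\|\phi_{k,\infty}\|_{L^2(\mu)}=1$.
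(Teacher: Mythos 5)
Your proof is correct and follows essentially the same route as the paper's: both pass to the limit in the eigenvalue equation, using $\|K_n - K\|_\infty \to 0$ (from total-variation convergence), boundedness of the eigenfunctions (Lemma~\ref{lem: bounded eigenfunctions}), the hypothesized uniform convergence $\phi_{k,n}\to\phi_{k,\infty}$, eigenvalue convergence via Theorem~\ref{thm:koltchinskii}, and total-variation convergence against a fixed bounded integrand; your two-term split is just a regrouping of the paper's four error terms $A_n, B_n, C_n, D_n$. Two minor refinements on your side: keeping $\lambda_{k,n}$ multiplied on the left (rather than dividing by $\lambda_{k,n}\lambda_k$ as the paper does) avoids implicitly assuming the eigenvalues are nonzero, and your closing normalization argument showing $\|\phi_{k,\infty}\|_{L^2(\mu)}=1$ rules out a trivial (zero) limit, a point the paper leaves unaddressed.
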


\begin{proof}
We have the following,
\begin{align*}
\phi_{k,n}(x)=&\frac{1}{\lambda_{k,n}}\int\limits_X K_n(x,y) \phi_{k,n}(y)  \mu_n(\diff y)\\
=&\frac{1}{\lambda_k}\int\limits_X K(x,y) \phi_{k,\infty}(y) \mu_n(\diff y)\\
&+\frac{\lambda_k-\lambda_{k,n}}{\lambda_{k,n}\lambda_k}\int\limits_X K(x,y) \phi_{k,\infty}(y) \mu_n(\diff y)\\
&+\frac{1}{\lambda_{k,n}}\int\limits_X  \Bigl(K_n(x,y)-K(x,y)\Bigr)\phi_{k,\infty}(y) \mu_n(\diff y)\\
&+\frac{1}{\lambda_{k,n}}\int\limits_X K_n(x,y) \Bigl(\phi_{k,n}(y)-\phi_{k,\infty}(y)\Bigr)\mu_n(\diff y).
\end{align*}

By Lemma~\ref{lem: bounded eigenfunctions},
$\phi_{k,\infty}$ is bounded. Therefore, we can insert $\frac{1}{\lambda_k}\int\limits_X K(x,y)\phi_{k,\infty}(y) \mu(\diff y)$ into the above aligned equations in order to obtain,
\begin{align*}
&\left|\phi_{k,n}(x)-\frac{1}{\lambda_k}\int\limits_X K(x,y)\phi_{k,\infty}(y)\mu(\diff y)\right|\\
\le&\left|\frac{1}{\lambda_k}\int\limits_X K(x,y) \phi_{k,\infty}(y) \mu_n(\diff y)-\frac{1}{\lambda_k}\int\limits_X K(x,y)\phi_{k,\infty}(y) \mu(\diff y)\right|\\
&+\left|\frac{\lambda_k-\lambda_{k,n}}{\lambda_{k,n}\lambda_k}\int\limits_X K(x,y) \phi_{k,\infty}(y) \mu_n(\diff y)\right|\\
&+\left|\frac{1}{\lambda_{k,n}}\int\limits_X  \Bigl(K_n(x,y)-K(x,y)\Bigr)\phi_{k,\infty}(y) \mu_n(\diff y)\right|\\
&+\left|\frac{1}{\lambda_{k,n}}\int\limits_X K_n(x,y) \Bigl(\phi_{k,n}(y)-\phi_{k,\infty}(y)\Bigr) \mu_n(\diff y)\right|\\
:=&A_n+B_n+C_n+D_n.
\end{align*}

Since the $\lambda_{k,n}$ converge to $\lambda_k$, since the $K_n$ converge to $K$, since the $\phi_{k,n}$ converge to $\phi_{k,\infty}$, and since $\phi_{k,\infty}$, $K$, and $K_n$ are bounded, it follows that the $B_n$, $C_n$, and $D_n$ converge to 0 as $n\to\infty$.
Since $\mu_n\to\mu$, we also have that $A_n\to0$ as $n \to\infty$
Therefore
\[\phi_{k,n}(x)\to \frac{1}{\lambda_k}\int\limits_X K(x,y)\phi_{k,\infty}(y)\mu(\diff y)=\frac{1}{\lambda_k}[T_k \phi_{k,\infty}](x)\]
for all $x\in X$.
Since we also have $\phi_{k,n}(x)\to \phi_{k,\infty}(x)$, it follows that $\lambda_k \phi_{k,\infty}(x)=T_k \phi_{k,\infty}$.
Therefore $\phi_{k,\infty}$ is an eigenfunction of $T_K$ with an eigenvalue $\lambda_k$.
\end{proof}

\section{Convergence of MDS for Arbitrary Measures}\label{sec: convergence deterministic arbitrary}

We now generalize the setting of Section~\ref{sec: convergence deterministic} to allow for arbitrary measures, as opposed to finite sums of Dirac delta measures as illustrated in Figure~\ref{fig:sub:measure_deterministic_finite}.
Suppose $X$ is a bounded metric space, and $\mu_n$ is an arbitrary sequence of probability measures on $X_n$ for all $n\in \N$, such that $\mu_n$ converges to $\mu$ in total variation as $n\to\infty$. 
For example, the support of each $\mu_n$ is now allowed to be infinite as illustrated in Figure~\ref{fig:sub:measure_deterministic_infinite}.
We will give some first results towards showing that the MDS embeddings of $(X, d, \mu_n)$ converge to the MDS embedding of $(X, d, \mu)$.

    \begin{figure}[h]
    
    \subfloat[Convergence of arbitrary measures with finite support.]{\label{fig:sub:measure_deterministic_finite}\includegraphics[width=0.8\textwidth]{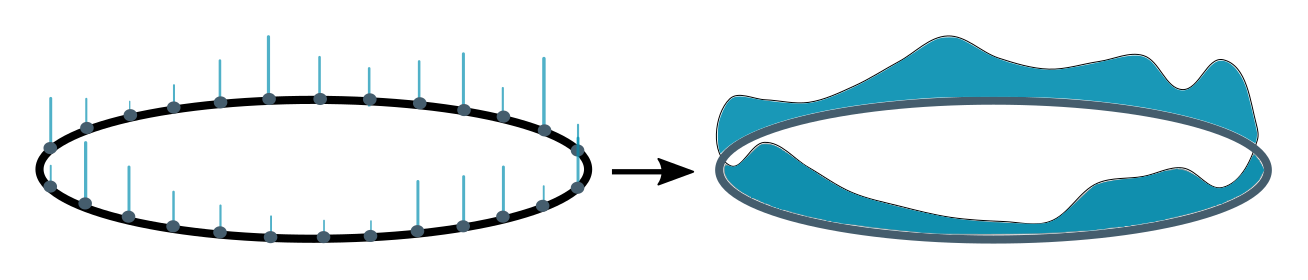}}
    
    \subfloat[Convergence of arbitrary measures with infinite support.]{\label{fig:sub:measure_deterministic_infinite}\includegraphics[width=0.8\textwidth]{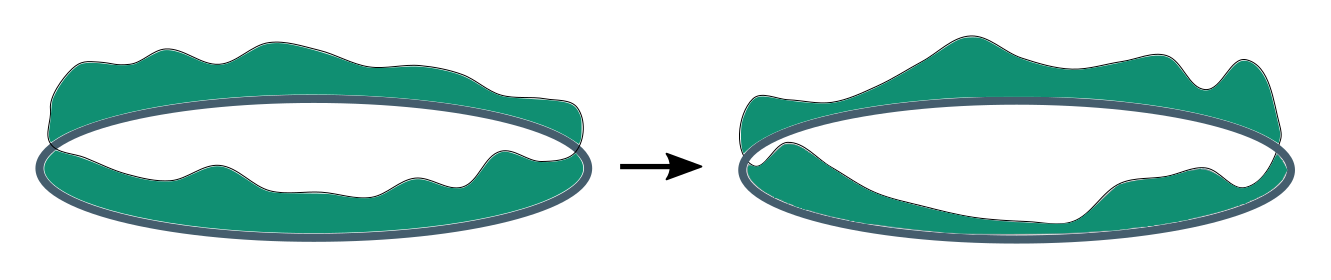}}
        \caption{Illustration of convergence (in total variation) of arbitrary measures.}
        
    \label{fig: arbitrary measures} 

\end{figure}

The bounded metric measure space $(X, d, \mu)$ is equipped with a kernel $K\colon X\times X\to \R$ and linear operator $T_K\colon L^2(X, \mu) \to L^2(X, \mu)$, as defined as in Definition~\ref{def:metric-to-kernel}.
For $(X, d, \mu_n)$, we denote the analogous kernel by $K_n\colon X\times X\to \R$ and its linear operator by $T_{K_n}\colon L^2(X, \mu_n) \to L^2(X, \mu_n)$.
Let $\lambda_{k,n}$ denote the $k$th eigenvalue of $T_{K_n}$ with associated eigenfunction $\phi_{k,n}$. 
Furthermore, let $\lambda_k$ denote the $k$th eigenvalue of $T_K$ with associated eigenfunction $\phi_{k}$.

\begin{proposition}\label{prop: wasserstein convergence}
Suppose $\mu_n$ converges to $\mu$ in total variation.
If the eigenvalues $\lambda_{k,n}$ of $T_{K_n}$ converge to $\lambda_k$, and if their corresponding eigenfunctions $\phi_{k,n}$ of $T_{K_n}$ converge uniformly to $\phi_{k, \infty}$ as $n \to \infty$, then the $\phi_{k, \infty}$ are eigenfunctions of $T_K$ with eigenvalue $\lambda_k$.
\end{proposition}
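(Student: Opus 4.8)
The plan is to reproduce the argument of Proposition~\ref{prop: wasserstein convergence finite} almost verbatim, observing that the only place in that proof where finite support of $\mu_n$ was used was the appeal to Theorem~\ref{thm:koltchinskii} to obtain eigenvalue convergence --- and here the convergence $\lambda_{k,n}\to\lambda_k$ is instead supplied as a hypothesis. First I would record the boundedness facts that drive the estimates: that $K$ and each $K_n$ are bounded (immediate from boundedness of $d$ and the fact that $\mu$ and each $\mu_n$ is a probability measure); that $\|K_n-K\|_\infty\to 0$, which follows from $\mu_n\to\mu$ in total variation because every term defining $K_n-K$ is an integral of the bounded function $d^2$ against $\mu_n$ versus $\mu$, the double-integral term being handled by a one-step telescoping $\iint d^2\,\mu_n\otimes\mu_n-\iint d^2\,\mu\otimes\mu$ and the bound $|\int g\,(\mu_n-\mu)|\le\|g\|_\infty\|\mu_n-\mu\|_{\mathrm{TV}}$; and that $\phi_{k,\infty}$ is bounded. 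This last fact is the analogue of Lemma~\ref{lem: bounded eigenfunctions} and carries over unchanged: since $\|\phi_{k,n}\|_{L^2(\mu_n)}=1$ and $\mu_n(X)=1$, Lemma~\ref{lem: L1L2} (Cauchy--Schwarz) gives $\|\phi_{k,n}\|_{L^1(\mu_n)}\le 1$, so the eigenfunction identity yields the uniform pointwise bound $|\phi_{k,n}(x)|\le c/|\lambda_{k,n}|$, and letting $n\to\infty$ with $\phi_{k,n}\to\phi_{k,\infty}$ uniformly and $\lambda_{k,n}\to\lambda_k\ne 0$ shows $\phi_{k,\infty}$ is bounded.

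Next I would start from the eigenfunction equation $\phi_{k,n}(x)=\frac{1}{\lambda_{k,n}}\int_X K_n(x,y)\phi_{k,n}(y)\,\mu_n(\diff y)$ and split the integrand by the algebraic identity $K_n\phi_{k,n}=K\phi_{k,\infty}+(K_n-K)\phi_{k,\infty}+K_n(\phi_{k,n}-\phi_{k,\infty})$ together with $\frac{1}{\lambda_{k,n}}=\frac{1}{\lambda_k}+\frac{\lambda_k-\lambda_{k,n}}{\lambda_{k,n}\lambda_k}$. This reproduces exactly the four-term decomposition of $\phi_{k,n}(x)$ used in the finite-measure proof. I would then insert the candidate limit $\frac{1}{\lambda_k}\int_X K(x,y)\phi_{k,\infty}(y)\,\mu(\diff y)$ and bound the difference by $A_n+B_n+C_n+D_n$, where $A_n$ compares $\mu_n$ with $\mu$ in the leading term, $B_n$ carries the factor $\lambda_k-\lambda_{k,n}$, $C_n$ carries $K_n-K$, and $D_n$ carries $\phi_{k,n}-\phi_{k,\infty}$.

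I would then argue that each term vanishes as $n\to\infty$: $A_n\to 0$ because $K\phi_{k,\infty}$ is a bounded measurable function and $\mu_n\to\mu$ in total variation forces uniform convergence of integrals of such functions; $B_n\to 0$ because $\lambda_{k,n}\to\lambda_k$ while the integral factor remains bounded (bounded integrand against a probability measure); $C_n\to 0$ because $\|K_n-K\|_\infty\to 0$; and $D_n\to 0$ because $\|\phi_{k,n}-\phi_{k,\infty}\|_\infty\to 0$ while $K_n$ is uniformly bounded. Hence $\phi_{k,n}(x)\to\frac{1}{\lambda_k}[T_K\phi_{k,\infty}](x)$ pointwise; but $\phi_{k,n}(x)\to\phi_{k,\infty}(x)$ as well, so $\lambda_k\phi_{k,\infty}=T_K\phi_{k,\infty}$, which is the claim.

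The main obstacle --- and the reason this statement is genuinely more general than Proposition~\ref{prop: wasserstein convergence finite} --- is that one can no longer \emph{derive} $\lambda_{k,n}\to\lambda_k$ from Theorem~\ref{thm:koltchinskii}, since that result is tailored to empirical measures $\frac{1}{n}\sum\delta_{x_i}$ of finite support and has no obvious substitute for arbitrary (possibly infinitely supported) $\mu_n$; this is exactly why eigenvalue convergence enters as a hypothesis rather than a conclusion. The remaining delicate point is that the $\phi_{k,n}$ a priori live in the different spaces $L^2(X,\mu_n)$ while $\phi_{k,\infty}$ must be read inside $L^2(X,\mu)$; the uniform convergence hypothesis is precisely what lets all four error terms be controlled in the sup norm and what guarantees $\phi_{k,\infty}$ is bounded, hence square-integrable against $\mu$ since $\mu$ is a probability measure.
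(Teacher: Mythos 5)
Your proposal is correct and matches the paper's argument exactly: the paper's own proof of this proposition simply states that the proof of Proposition~\ref{prop: wasserstein convergence finite} carries over verbatim, since the only place finite support was used was to obtain eigenvalue convergence via Theorem~\ref{thm:koltchinskii}, which is now assumed as a hypothesis. You have spelled out that same four-term decomposition and the same boundedness lemmas in full detail, which is precisely what the paper leaves implicit.
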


\begin{proof}
The same proof of Proposition~\ref{prop: wasserstein convergence finite} holds.
Indeed, so long as we know that the eigenvalues $\lambda_{k,n}$ of $T_{K_n}$ converge to $\lambda_k$, then nowhere else in the proof of Proposition~\ref{prop: wasserstein convergence finite} does it matter whether $\mu_n$ is an average of Dirac delta masses or instead an arbitrary probability measure.
\end{proof}

We conjecture that the hypothesis in Proposition~\ref{prop: wasserstein convergence} about the convergence of eigenvalues is unnecessary.

\begin{conjecture}\label{conj:eigenvalue-convergence}
Suppose we have the convergence of measures $\mu_n\to\mu$ in total variation.
The ordered spectrum of $T_{K_n}$ converges to the ordered spectrum of $T_{K}$ as $n \to \infty$ with respect to the $\ell^2$--distance,
\[\ell^2(\lambda(T_{ K_n}), \lambda(T_K)) \to 0.\]
\end{conjecture}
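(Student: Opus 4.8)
The plan is to reduce the claim to the convergence of all \emph{spectral moments} $\tr(T_{K_n}^p)\to\tr(T_K^p)$, and then to invoke the operator-theoretic core of Theorem~\ref{thm:koltchinskii} to pass from moment convergence to convergence in the $\ell^2$-rearrangement distance $\delta_2$. The starting observation is that, since each $K_n$ and $K$ is a bounded symmetric $L^2$-kernel, each $T_{K_n}$ and $T_K$ is a self-adjoint Hilbert--Schmidt operator whose spectrum is real, orderable, and square-summable (Theorem~\ref{thm:spectral} and Proposition~\ref{prop: HS-kernel norm}); in particular every eigenvalue satisfies $|\lambda_{k,n}|\le\|T_{K_n}\|_{HS}=\|K_n\|_{L^2(\mu_n\otimes\mu_n)}\le\|K_n\|_\infty$, which is uniformly bounded by some constant $c$ once $\|K_n-K\|_\infty\to0$. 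For $p\ge 2$ the trace of the $p$th power is the iterated-kernel integral
\[\tr(T_{K_n}^p)=\sum_k \lambda_{k,n}^p=\int_{X^p} K_n(x_1,x_2)K_n(x_2,x_3)\cdots K_n(x_p,x_1)\,\mu_n(\diff x_1)\cdots\mu_n(\diff x_p),\]
and similarly for $T_K$ with $K$ and $\mu$ (this is the integral-operator form of the trace identities behind Proposition~\ref{prop:trace-conj}).

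First I would prove a tensorization sublemma: if $\mu_n\to\mu$ in total variation, then $\mu_n^{\otimes p}\to\mu^{\otimes p}$ in total variation for each fixed $p$, which follows from a telescoping estimate together with $\|\mu_n\|_{TV}=\|\mu\|_{TV}=1$. Granting this, and recalling from Section~\ref{sec: convergence deterministic} that total-variation convergence gives $\|K_n-K\|_\infty\to0$, I would establish $\tr(T_{K_n}^p)\to\tr(T_K^p)$ for every $p\ge 1$ in one stroke: the cyclic products $K_n(x_1,x_2)\cdots K_n(x_p,x_1)$ converge to $K(x_1,x_2)\cdots K(x_p,x_1)$ uniformly on $X^p$ (the factors are uniformly close and uniformly bounded), while integrating a uniformly bounded integrand against measures converging in total variation preserves the limit. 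Thus both the change of kernel $K_n\to K$ and the change of measure $\mu_n\to\mu$ are absorbed simultaneously, and in particular $\tr(T_{K_n}^2)=\|K_n\|_{L^2(\mu_n\otimes\mu_n)}^2\to\|K\|_{L^2(\mu\otimes\mu)}^2=\tr(T_K^2)$, so the squared $\ell^2$-norms of the two eigenvalue sequences converge.

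Next I would convert moment convergence into $\delta_2$-convergence of the ordered spectra, which is exactly the deterministic operator lemma underlying Theorem~\ref{thm:koltchinskii} (in the cited work the moments converge almost surely by the law of large numbers; here they converge deterministically from the total-variation hypothesis, so the same lemma applies). Concretely, I would encode the spectrum of $T_{K_n}$ by the positive measure $\nu_n=\sum_k \lambda_{k,n}^2\,\delta_{\lambda_{k,n}}$ on the fixed compact interval $[-c,c]$; its moments are $\int t^{m}\,\diff\nu_n(t)=\tr(T_{K_n}^{m+2})$, so moment convergence for all $m\ge0$ forces $\nu_n\to\nu_\infty$ weakly by the method of moments (determinate on a compact set). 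Weak convergence of these spectral measures, together with convergence of the total masses $\nu_n([-c,c])=\tr(T_{K_n}^2)$, yields componentwise convergence of the sorted eigenvalues, and combined with the convergence of the $\ell^2$-norms this gives strong $\ell^2$-convergence by the Radon--Riesz property of Hilbert space, that is, $\delta_2(\lambda(T_{K_n}),\lambda(T_K))\to0$.

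The hard part will be this last step: making the passage from moment convergence to $\delta_2$-convergence fully rigorous when the spectrum is infinite and two-signed. The eigenvalues accumulating at $0$ carry almost no weight in $\nu_n$ (their weight is $\lambda^2$), so weak convergence of $\nu_n$ does not directly detect them; one must separately argue that their total $\ell^2$-contribution is uniformly small, which I expect to control using the convergence of $\tr(T_{K_n}^2)$ to bound the tails uniformly in $n$. Disentangling the positive and negative parts of the spectrum (via even versus odd moments, or via continuous test functions vanishing to second order at $0$) and handling thresholds that happen to coincide with limit eigenvalues are the technical points that must be carried out with care, and adapting the Koltchinskii--Gin\'e argument to the present setting, in which the kernel $K_n$ varies along with the measure $\mu_n$, is the crux of the proof.
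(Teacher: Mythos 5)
You should first be aware that the paper contains no proof of this statement: it is stated as Conjecture~\ref{conj:eigenvalue-convergence} and explicitly left open, the only guidance being Remark~\ref{rem:eigenvalue-convergence}, which suggests adapting the proof of~\cite[Theorem~3.1]{koltchinskii2000random} with the inner products there replaced by inner products with respect to $\mu_n$. So there is no ``paper's proof'' to match; your proposal must be judged on its own merits, and if completed it would settle an open question of the thesis rather than reprove a known result.

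Your route is genuinely different from the one the paper hints at, and the difference is substantive. The obstruction that Remark~\ref{rem:eigenvalue-convergence} is implicitly wrestling with is that $T_{K_n}$ and $T_K$ act on \emph{different} Hilbert spaces, $L^2(X,\mu_n)$ and $L^2(X,\mu)$, so the Hoffman--Wielandt-type bound $\delta_2(\lambda(A),\lambda(B))\le\|A-B\|_{HS}$ that drives the Koltchinskii--Gin\'e argument does not apply directly. Your moment method sidesteps this entirely: $\tr(T_{K_n}^p)$ is a basis-free scalar given by an iterated-kernel integral, so the change of kernel and the change of measure are absorbed simultaneously. That half of your argument is essentially complete as written: the telescoping estimates $\|\mu_n^{\otimes p}-\mu^{\otimes p}\|_{TV}\le p\,\|\mu_n-\mu\|_{TV}$ and $\|K_n\cdots K_n - K\cdots K\|_\infty \le p\,c^{p-1}\|K_n-K\|_\infty$ are correct, and integrating uniformly bounded integrands against TV-convergent measures passes to the limit. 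The second half (method of moments for $\nu_n=\sum_k\lambda_{k,n}^2\delta_{\lambda_{k,n}}$ on $[-c,c]$, then sorted-spectrum convergence) is correctly outlined and is completable: compact support makes the limit measure moment-determinate; the portmanteau theorem plus an integer-counting argument recovers each nonzero eigenvalue with its multiplicity; and the near-zero tails are controlled because $\nu_n((-\epsilon,\epsilon))\to\nu((-\epsilon,\epsilon))$, which is small since $\nu(\{0\})=0$. Two cautions. First, do not present the moments-to-spectrum step as ``invoking the operator-theoretic core of Theorem~\ref{thm:koltchinskii}'': the core of that proof is an HS-perturbation argument, not a moment lemma, and it is precisely the piece that does not transfer across different Hilbert spaces; your self-contained sketch is what must actually be executed. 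Second, the Radon--Riesz step requires enumerating the two-signed spectrum as $\ell^2$ sequences, so carry it out on the positive and negative parts separately (using that $\nu_n([0,c])\to\nu([0,c])$, legitimate since $\nu$ puts no mass at $0$), or replace it by the direct $\delta_2$ bound that matches large eigenvalues to large eigenvalues and dumps the near-zero eigenvalues into the uniformly small tail. With those points made rigorous, the argument goes through.
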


\begin{remark}\label{rem:eigenvalue-convergence}
We remark that some ideas from the proof of~\cite[Theorem~3.1]{koltchinskii2000random} may be useful here.
One change is that the inner products considered in equation (3.4) of the proof of~\cite[Theorem~3.1]{koltchinskii2000random} may need to be changed to inner products with respect to the measure $\mu_n$.
\end{remark}

\section{Convergence of MDS with Respect to Gromov--Wasserstein Distance}\label{sec: convergence GW}

We now consider the more general setting in which $(X_n, d_n, \mu_n)$ is an arbitrary sequence of metric measure spaces, converging to $(X, d, \mu)$ in the Gromov--Wasserstein distance as illustrated in Figure~\ref{fig:sub:measure_gw_finite} for the finite case and Figure~\ref{fig:sub:measure_gw_infinite} for the infinite case.
We remark that $X_n$ need to no longer equal $X$, nor even be a subset of $X$. Indeed, the metric $d_n$ on $X_n$ is allowed to be different from the metric $d$ on $X$.
Sections~\ref{sec: convergence random} and~\ref{sec: convergence deterministic} would be the particular case (depending on your perspective) when either $X_n$ is a finite subset of $X$ and $d_n$ is the restriction of $d$, or equivalently when $(X_n,d_n)$ is equal to $(X,d)$ but $\mu_n$ is a finite average of Dirac delta masses.
Section~\ref{sec: convergence deterministic arbitrary} is the particular case when $(X_n,d_n)=(X,d)$ for all $n$, and the measures $\mu_n$ are converging to $\mu$.
We now want to consider the case where metric $d_n$ need no longer be equal to $d$.

\begin{figure}[h] 

    \subfloat[Convergence of mm-spaces equipped with measures of finite support.]{\label{fig:sub:measure_gw_finite}\includegraphics[width=0.8\textwidth]{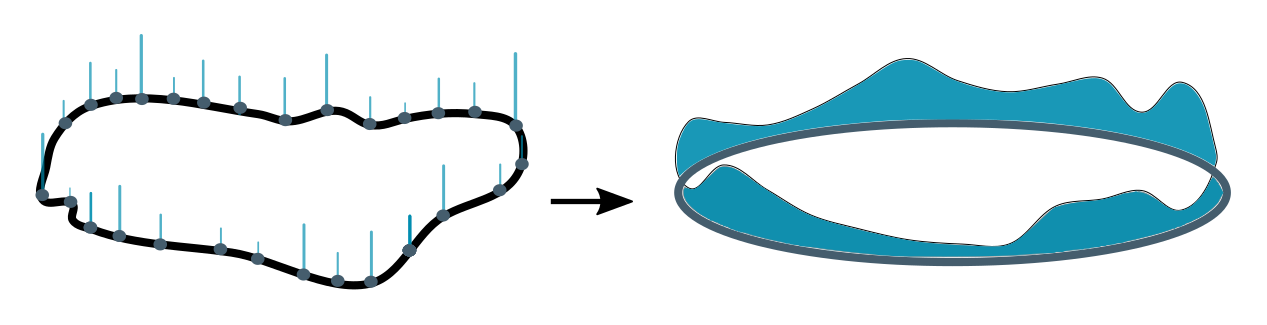}}
    
    \subfloat[Convergence of mm-spaces equipped with measures of infinite support.]{\label{fig:sub:measure_gw_infinite}\includegraphics[width=0.8\textwidth]{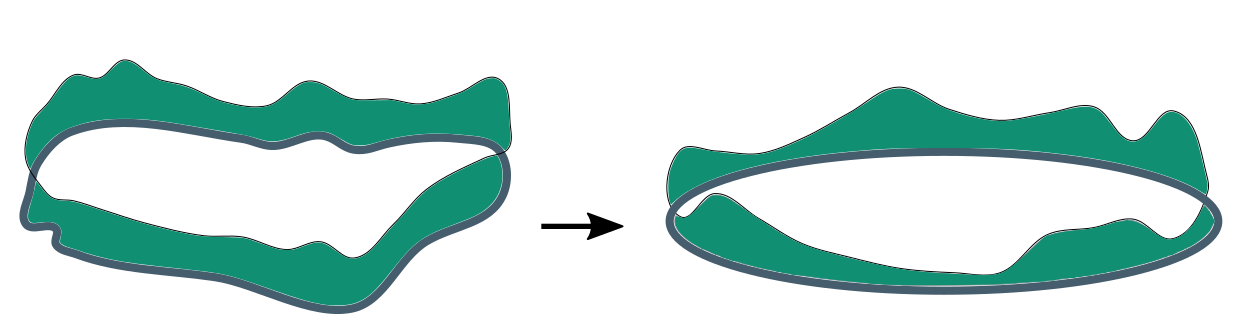}}

    \caption{Illustration of Gromov--Wasserstein convergence of arbitrary metric measure spaces (mm-spaces).}

    \label{fig: GW convergence} 

\end{figure}

\begin{conjecture}\label{conj: GW}
Let $(X_n, d_n, \mu_n)$ for $n\in\N$ be a sequence of metric measure spaces that converges to $(X, d, \mu)$ in the Gromov--Wasserstein distance. Then the MDS embeddings converge.
\end{conjecture}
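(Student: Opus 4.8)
The plan is to prove convergence of the \emph{output} metric measure spaces rather than of the maps themselves. Writing $\hat X_n = (f_n(X_n),\hat d_n,(f_n)_\#\mu_n)$ and $\hat X = (f(X),\hat d, f_\#\mu)$ for the images of the MDS embeddings, I would show that $\fD_p(\hat X_n,\hat X)\to 0$ as $n\to\infty$. This is the natural target, since the hypothesis is itself phrased in the Gromov--Wasserstein distance and since the embedded spaces generally live on genuinely different underlying sets (as $X_n$ need not be a subset of $X$), so a pointwise comparison of $f_n$ with $f$ is unavailable. The bridge between the differing spaces will be a sequence of near-optimal couplings $\pi_n\in\mathcal{M}(\mu_n,\mu)$ with $\mathbf{J}_p(\pi_n)\to 0$, which exist because $\fD_p(X_n,X)\to 0$.

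First I would establish kernel convergence along the coupling. Recalling from Proposition~\ref{prop:dist-inner-identity} that the embedded squared distance is $\hat d(f(x),f(s))^2 = K_{\hat B}(x,x)+K_{\hat B}(s,s)-2K_{\hat B}(x,s)$, it suffices to control $K_{\hat B}^{(n)}$ against $K_{\hat B}$ as measured through $\pi_n$. The starting point is that, by construction of $\mathbf{J}_p$, Gromov--Wasserstein convergence forces the raw distortion to vanish,
\[\int_{(X_n\times X)^2}\bigl|d_n(x,x')-d(y,y')\bigr|^p\,\pi_n(\diff x\times \diff y)\,\pi_n(\diff x'\times \diff y')\longrightarrow 0,\]
and, since $d$ is bounded, the same limit holds with $d_n^2$ and $d^2$ in place of $d_n$ and $d$. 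Because the kernel $K$ of Definition~\ref{def:metric-to-kernel} is obtained from $d^2$ by double-centering against the measure, and because $\pi_n$ matches $\mu_n$ with $\mu$ marginally, I would propagate this estimate through each of the four centering integrals to conclude that $K_n$ and $K$ agree in the coupled $L^2$ sense.

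The spectral step is what I expect to be the main obstacle. I would use the disintegration of $\pi_n$ to transport functions between $L^2(X,\mu)$ and $L^2(X_n,\mu_n)$ via conditional expectation, and then argue that $T_{K_n}$ converges to $T_K$ strongly enough to force convergence of the ordered spectra, $\ell^2(\lambda(T_{K_n}),\lambda(T_K))\to 0$, together with uniform convergence of the transported eigenfunctions. The eigenvalue half is exactly the content of Conjecture~\ref{conj:eigenvalue-convergence}, now complicated further by the fact that the metrics $d_n$ differ from $d$ and the operators act on different Hilbert spaces; the ideas indicated in Remark~\ref{rem:eigenvalue-convergence} for adapting~\cite[Theorem~3.1]{koltchinskii2000random} (replacing the relevant inner products by ones against $\mu_n$, and passing through the coupling rather than a sampling map) are the natural route. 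Once convergence of eigenvalues and existence of eigenfunction limits are granted, the template of Proposition~\ref{prop: wasserstein convergence} identifies those limits as eigenfunctions of $T_K$. Establishing this spectral convergence when the base space, the metric, and the measure all vary simultaneously is the crux of the difficulty, and is precisely why the statement is posed as a conjecture.

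Granting the spectral step, the remainder is routine. Since truncation to the nonnegative part is a contraction in $\ell^2$ (Lemma~\ref{lem: closest-nonneg-l2}), convergence of the ordered eigenvalues yields $\hat\lambda_i^{(n)}\to\hat\lambda_i$, and combined with eigenfunction convergence this gives $K_{\hat B}^{(n)}\to K_{\hat B}$ in the coupled $L^2$ sense. Feeding this back into the displayed identity for the embedded squared distances shows that the pushforward of $\pi_n$ under $f_n\times f$ is a coupling of $(f_n)_\#\mu_n$ and $f_\#\mu$ whose distortion functional tends to zero, which is exactly $\fD_p(\hat X_n,\hat X)\to 0$. In this way the conjecture reduces cleanly to the single hard analytic input of spectral convergence under Gromov--Wasserstein perturbations.
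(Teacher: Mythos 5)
The statement you are addressing is posed in the paper as an open conjecture; the paper offers no proof of it, and in fact the strictly easier special case where $(X_n,d_n)=(X,d)$ for all $n$ and only the measures vary is itself left open there (Conjecture~\ref{conj:eigenvalue-convergence}). Your proposal does not close this gap: by your own account you ``grant the spectral step,'' i.e.\ you assume convergence of the ordered spectra and of (transported) eigenfunctions when the underlying set, the metric, and the measure all vary simultaneously. That step is not a technical lemma one can defer --- it is the entire mathematical content of the conjecture, and it is strictly harder than the open Conjecture~\ref{conj:eigenvalue-convergence}. So what you have written is a reduction of the conjecture to a stronger unproven statement, not a proof. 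Two further points in the proposed route would need repair even if spectral convergence were available. First, eigenfunction-by-eigenfunction convergence can genuinely fail when eigenvalues have multiplicity greater than one or cross in the limit (the circle already has all nonzero eigenvalues of multiplicity two); the correct objects are spectral projections, and any argument phrased in terms of individual $\phi_{k,n}$ will need to be rewritten accordingly. Second, to pass from eigenvalue/eigenfunction convergence to $\fD_p(\hat X_n,\hat X)\to 0$ for the $\ell^2$ embedding, termwise convergence of $\hat\lambda_i^{(n)}\phi_i^{(n)}\phi_i^{(n)}$ is not enough; you need uniform control of the spectral tail (e.g.\ uniform trace-class bounds on $T_{K_{\hat B}^{(n)}}$), since the embedded distances are given by infinite series.

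That said, your framing has real value, and it is aligned with the decomposition the paper itself suggests in Question~\ref{ques: GW} (first show $T_{K_n}\to T_K$ in a suitable sense, then show that operator convergence forces embedding convergence). In particular, the conjecture as stated never specifies in what sense ``the MDS embeddings converge,'' and your choice --- convergence of the image metric measure spaces $(f_n(X_n),\hat d_n,(f_n)_\#\mu_n)$ to $(f(X),\hat d,f_\#\mu)$ in $\fD_p$ --- is a sensible formalization precisely because it is isometry-invariant and therefore insensitive to the sign/rotation ambiguity of eigenfunctions. Your observation that a near-optimal coupling $\pi_n$ can be pushed forward under $f_n\times f$ to produce a candidate coupling of the embedded measures is also the right mechanism for comparing spaces that share no common ambient set. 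But as submitted, the proposal establishes only the routine outer layers of the argument and leaves the core --- spectral convergence under Gromov--Wasserstein perturbations of the space, metric, and measure together --- exactly as open as it was.
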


\begin{question}\label{ques: GW}
Are there other notions of convergence of a sequence of arbitrary (possibly infinite) metric measure spaces $(X_n, d_n, \mu_n)$ to a limiting metric measure space $(X, d, \mu)$ that would imply that the MDS embeddings converge in some sense?
We remark that one might naturally try to break this into two steps: first analyze which notions of convergence $(X_n, d_n, \mu_n) \to (X, d, \mu)$ imply that the operators $T_{K_n}\to T_K$ converge, and then analyze which notions of convergence on the operators $T_{K_n}\to T_K$ imply that their eigendecompositions and MDS embeddings converge. 
\end{question}

\chapter{Conclusion}
\label{chap: conclusion}

MDS is concerned with problem of mapping the objects $x_1, \ldots, x_n$ to a configuration (or embedding) of points $f(x_1), \ldots, f(x_n)$ in $\R^m$ in such a way that the given dissimilarities $d_{ij}$ are well-approximated by the Euclidean distances between $f(x_i) $ and $f(x_j)$. We study a notion of MDS on infinite metric measure spaces, which can be simply thought of as spaces of (possibly infinitely many) points equipped with some probability measure. 
We explain how MDS generalizes to infinite metric measure spaces. 
Furthermore, we describe in a self-contained fashion an infinite analogue to the classical MDS algorithm.
Indeed, classical multidimensional scaling can be described either as a $\strain$-minimization problem, or as a linear algebra algorithm involving eigenvalues and eigenvectors.
We describe how to generalize both of these formulations to infinite metric measure spaces.
We show that this infinite analogue minimizes a $\strain$ function similar to the $\strain$ function of classical MDS. This theorem generalizes~\cite[Theorem~14.4.2]{bibby1979multivariate}, or equivalently~\cite[Theorem~2]{trosset1997computing}, to the infinite case.
Our proof is organized analogously to the argument in~\cite[Theorem~2]{trosset1997computing}.

As a motivating example for convergence of MDS, we consider the MDS embeddings of the circle equipped with the (non-Euclidean) geodesic metric.
By using the known eigendecomposition of circulant matrices, we identify the MDS embeddings of evenly-spaced points from the geodesic circle into $\R^m$, for all $m$.
Indeed, the MDS embeddings of the geodesic circle are closely related to~\cite{von1941fourier}, which was written prior to the invention of MDS.

Lastly, we address convergence questions for MDS.
Indeed, convergence is well-understood when each metric space has the same finite number of points~\cite{sibson1979studies}, but we are also interested in convergence when the number of points varies and is possibly infinite.
We survey Sibson's perturbation analysis~\cite{sibson1979studies} for MDS on a fixed number of $n$ points. 
We survey results of~\cite{bengio2004learning,koltchinskii2000random} on the convergence of MDS when $n$ points $\{x_1,\ldots,x_n\}$ are sampled from a metric space according to a probability measure $\mu$, in the limit as $n\to\infty$.
We reprove these results under the (simpler) deterministic setting when points are not randomly chosen, and instead we assume that the corresponding finite measures $\mu_n = \frac{1}{n}\sum\limits_{i=1}^{n} \delta_{x_i}$ (determined by $n$ points) converge to $\mu$.
This allows us, to consider the more general setting where we have convergence of \emph{arbitrary} probability measures $\mu_n\to\mu$.
However, several questions remain open.
In particular, we would like to have a better understanding of the convergence of MDS under the most unrestrictive assumptions of a sequence of arbitrary (possibly infinite) metric measure spaces converging to a fixed metric measure space, perhaps in the Gromov--Wasserstein distance (that allows for distortion of both the metric and the measure simultaneously); see Conjecture~
\ref{conj: GW} and Question~\ref{ques: GW}.

Despite all of the work that has been done on MDS by a wide variety of authors, many interesting questions remain open (at least to us).
For example, consider the MDS embeddings of the $n$-sphere for $n\ge 2$.

\begin{question}
What are the MDS embeddings of the $n$-sphere $S^n$, equipped with the geodesic metric, into Euclidean space $\R^m$?
\end{question}

To our knowledge, the MDS embeddings of $S^n$ into $\R^m$ are not understood for all positive integers $m$ except in the case of the circle, when $n=1$.
The above question is also interesting, even in the case of the circle, when the $n$-sphere is not equipped with the uniform measure.
As a specific case, what is the MDS embedding of $S^1$ into $\R^m$ when the measure is not uniform on all of $S^1$, but instead (for example) uniform with mass $\frac{2}{3}$ on the northern hemisphere, and uniform with mass $\frac{1}{3}$ on the southern hemisphere?

We note the work of Blumstein and Kvinge \cite{blumstein2018letting}, where a finite group representation theoretic perspective on MDS is employed. Adapting these techniques to the analytical setting of compact Lie groups may prove fruitful for the case of infinite MDS on higher dimensional spheres. 

We also note the work \cite{blumstein2019pseudo}, where the theory of an MDS embedding into pseudo Euclidean space is developed.  In this setting, both positive and negative eigenvalues are used to create an embedding. In the example of embedding $S^1$, positive and negative eigenvalues occur in a one-to-one fashion. We wonder about the significance of the full spectrum of eigenvalues for the higher dimensional spheres.

\backmatter
\bibliographystyle{plain}

\bibliography{KassabMasters.bib}

\end{document}